\DeclareMathOperator{\dom}{dom}
\DeclareMathOperator{\rge}{rge}
\DeclareMathOperator{\cf}{cf}
\DeclareMathOperator{\crit}{crit}
\DeclareMathOperator{\tp}{top}
\DeclareMathOperator{\Ult}{Ult}
\DeclareMathOperator{\stem}{stem}
\theoremstyle{plain}
\newtheorem{thm}{Theorem}[section]
\newtheorem{lemma}[thm]{Lemma}
\newtheorem{prop}[thm]{Proposition}
\newtheorem{claim}{Claim}[thm]
\newtheorem{assumpt}[thm]{Assumption}
\newtheorem{coll}[thm]{Corollary}
\theoremstyle{definition}
\newtheorem{defn}[thm]{Definition}
\newtheorem{rmk}[thm]{Remark}
\newenvironment{claimproof}[1]{\par\noindent{Proof}\space#1}{\hfill $\blacksquare$}
\tikzset{close/.style={near start,outer sep=-1pt}}
\title{Another method to add a closed unbounded set of former regulars}
\author{Moti Gitik}
\address{School of Mathematical Sciences, Tel Aviv University, Tel Aviv-Yafo,  Tel Aviv, Israel, 6997801}
\email{gitik@tauex.tau.ac.il}
\author{Sittinon Jirattikansakul}
\address{School of Mathematical Sciences, Tel Aviv University, Tel Aviv-Yafo,  Tel Aviv, Israel, 6997801}
\email{jir.sittinon@gmail.com}
\thanks{The authors were partially supported by ISF grant No. 1216/18.
The first author is grateful to Carmi Merimovich for many helpful discussions on the subject.}
\begin{document}
\maketitle

\begin{abstract}

A club consisting of former regulars is added to an inaccessible cardinal, without changing cofinalities outside it.
The initial assumption is optimal.
A variation of the Radin forcing without a top measurable cardinal is introduced for this.

\end{abstract}

\section{introduction}

From the assumption $\circ(\kappa)=\kappa^+$, by Mitchell,  Radin forcing \cite{michell1982howweakis} adds a club which consists of ground model regular cardinals, while maintaining inaccessibility at $\kappa$.
The large cardinal assumption for that is not optimal.
Later, the first author \cite{gitik1999onclosedunboundedsets} constructed a forcing which adds a club subset of $\kappa$ containing regular cardinals in $V$, while maintaining inaccessibility of $\kappa$, from the optimal assumption.
The forcing was done in two steps.
The first step is to build a $\kappa$-c.c. forcing iteration which is a cardinal-preserving forcing $\mathbb{P}_0$ ascertaining that in $V^{\mathbb{P}_0}$, the set $E:=\{ \alpha<\kappa \mid \alpha$ is regular in $V\}$ is a {\em fat set}, namely for every $\delta<\kappa$ and every club subset $C$ of $\kappa$, there is a closed subset of $C\cap E$ of order-type $\delta$.
Then the second step was done by shooting a club subset of $E$ using closed bounded subsets of $E$.
By fatness, the forcing $\mathbb{P}_1$ is $(\kappa,\infty)$-distributive.
Hence, the forcing produces a  club subset of $\kappa$, preserves all cardinals, preserves inaccessibility of $\kappa$.
However, $V$-regular cardinals even outside the club added by the forcing maybe singularized.
Here we prove the sharper result.
We state our result as the following.

\begin{thm}
\label{mainthm}
Let $\kappa$ be a strongly inaccessible cardinal.
Assume there are a set $X \subseteq \kappa$, $\circ:X \to \kappa$ ($\circ$ is the function representing Mithchell orders), and $\vec{U}$ such that

\begin{enumerate}

\item for $\alpha \in X$, $\circ(\alpha)<\alpha$.

\item for $\alpha \in X$, $\vec{U}(\alpha)=\{U(\alpha,\beta) \mid \beta<\circ(\alpha)\}$ is a Mitchell increasing sequence of normal measures.

\item for $\nu<\kappa$, $\{\alpha \in X \mid \circ(\alpha) \geq \nu\}$ is stationary.

\end{enumerate}

Then there is a cardinal-preserving extension of size $\kappa$ which adds a  club subset $C$ of $\kappa$ containing only regular cardinals in $V$, $\kappa$ is strongly inaccessible in the forcing extension.
Furthermore, $V$-regular cardinals in $\kappa \setminus \lim(C)$ remain regular in the extension.

\end{thm}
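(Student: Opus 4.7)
The plan is to introduce a Radin-style forcing $\mathbb{P}$ over $V$ built from the given coherent measure sequence $\vec{U}$, but requiring no measure on $\kappa$ itself; this is the ``top-free'' Radin forcing promised in the abstract. A condition will have the shape $p = \langle (\alpha_1, A_1), \ldots, (\alpha_n, A_n), F \rangle$ where $\alpha_1 < \cdots < \alpha_n$ are stem points from $X$, each $A_i$ is $U(\alpha_i,\beta)$-large for every $\beta < \circ(\alpha_i)$ and constrains the generic club in the interval $(\alpha_{i-1}, \alpha_i)$, and $F$ is an \emph{upper constraint} packaging, for each $\alpha \in X$ above $\alpha_n$, a measure-one set at $\alpha$ from which new stem points may be drawn. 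The direct-extension order $\leq^*$ shrinks the $A_i$'s and $F$; the full order $\leq$ also permits inserting new stem points taken from $F$.

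With the forcing defined I would proceed through the standard checklist. First, prove a Prikry-style property: for every statement $\phi$ of the forcing language and every $p$, some $p^* \leq^* p$ decides $\phi$. The argument goes by induction on the stem length and uses simultaneous diagonalisation across the normal measures $U(\alpha,\beta)$ for $\alpha \in X$ above the current stem. Second, verify that $\mathbb{P}$ has the $\kappa$-c.c.\ via a $\Delta$-system argument on the finite stems, and that $\mathbb{P}$ factors at each $\alpha$ as $\mathbb{P}_{\leq\alpha} * \dot{\mathbb{P}}_{>\alpha}$, with $\mathbb{P}_{\leq\alpha}$ of size at most $\alpha$ and $\dot{\mathbb{P}}_{>\alpha}$ sufficiently closed to add no new bounded subsets of $\alpha^+$. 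Together these yield preservation of all cardinals and of strong inaccessibility of $\kappa$.

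From a generic $G$ one reads off the desired club $C$ as the union of the stems appearing in $G$; by construction $C \subseteq X$, so every element of $C$ is $V$-regular. For the sharper clause, let $\alpha$ be $V$-regular with $\alpha \notin \lim(C)$. If $\alpha \notin C$, then only $\mathbb{P}_{\leq\alpha}$ can touch the cofinality of $\alpha$, and an induction on the Mitchell rank shows that this smaller forcing does not singularize $\alpha$. If $\alpha \in C$ but is not a limit of $C$, then in the factorisation at $\alpha$ the tail $\dot{\mathbb{P}}_{>\alpha}$ adds no short cofinal sequence into $\alpha$ by its closure, while the lower part again does not singularize $\alpha$; hence $\alpha$ stays regular.

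The principal obstacle is the Prikry property in this top-free setting. Without a measure at $\kappa$, one cannot shrink a single top set using $\kappa$-completeness and must instead merge decisions taken at infinitely many lower points $\alpha \in X$ into a single coherent upper constraint $F$. The stationarity hypothesis -- that $\{\alpha \in X : \circ(\alpha) \geq \nu\}$ is stationary for every $\nu < \kappa$ -- is exactly what is needed here: it guarantees unboundedly many $\alpha$ of arbitrarily high Mitchell order, so that one can always find absorbing points capable of deciding statements about the generic below them and close off the diagonalisation.
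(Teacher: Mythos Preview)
Your sketch misses the central mechanism of the paper, and as written the forcing you describe will not do the job.

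The paper does \emph{not} build a single top-free Radin forcing with an ``upper constraint'' $F$ ranging over all of $X$ above the stem. Instead it defines, by recursion on $\alpha\in X$, forcings $P_\alpha$ whose conditions $s^\frown\langle\alpha,A,F\rangle$ carry, in the stem entries and in the function $F$, \emph{names for conditions in quotient forcings} $P_{\dot\nu}/P_\gamma$ already defined at earlier stages. A stem entry can be a 5-tuple $\langle\alpha_i,A_i,F_i,\dot\nu_i,\dot q_i\rangle$ with $\dot q_i$ a $P_{\alpha_i}$-name for a condition in $P_{\dot\nu_i}/P_{\alpha_i}$; likewise $F(\gamma)=\langle\dot\nu,\dot q\rangle$. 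These quotient pieces are what fill the gaps between consecutive Magidor points so that $C_\alpha\setminus\{\alpha\}$ is genuinely a club in $\alpha$, not merely a sequence of order type $\omega^{\circ(\alpha)}$. Your conditions $\langle(\alpha_1,A_1),\ldots,(\alpha_n,A_n),F\rangle$ with $F(\alpha)$ a measure-one set have no such recursive content, and the union of stems from your generic will not be closed.

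Second, and more seriously, the paper never proves a Prikry property for $\mathbb{P}_\kappa$ itself; your diagnosis that this is ``the principal obstacle'' to be overcome by diagonalising into a coherent $F$ is not how the argument goes. The Prikry property is proved only for each $P_\alpha$ (where a measure sequence at $\alpha$ is available). The final forcing is simply $\mathbb{P}_\kappa=\bigcup_{\alpha<\kappa}P_\alpha$, of size $\kappa$ and $\kappa^+$-c.c.\ (not $\kappa$-c.c.). Preservation of cardinals and of inaccessibility is obtained by showing that any $f\colon\alpha\to\mathrm{On}$ in $V[G]$ already lies in some $V[G\restriction P_\gamma]$. This is where stationarity is used, and where the quotient structure is indispensable: one takes an elementary $M\prec H_\theta$ with $\gamma=M\cap\kappa$ satisfying $\circ(\gamma)>\alpha$, and then \emph{builds the function $F$ on $\gamma$ by induction on Mitchell order}, putting at each $\beta$ of order $\xi_0+\tau$ a name $\dot q_\beta$ for a condition (found inside $M$) in some quotient $P_{\dot\nu_\beta}/P_\beta$ that decides $\dot f(\tau)$. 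Without the ability to store quotient-forcing names inside $F$, there is nothing to build here, and your vague ``absorbing points'' do not substitute for it.
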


In this paper, we present a different method to add a  club subset of $\kappa$ while maintaining inaccessibility of $\kappa$.
Although we work in detail where the assumption  (as in Assumption \ref{initialassumpt1}) is slightly stronger than the assumption in Theorem \ref{mainthm}, one can easily modify  our detail to obtain the same result from the optimal assumption, namely the assumption in Theorem \ref{mainthm}.

The organization of this paper is the followings.

\begin{itemize}

\item In Section \ref{basicfacts} we give an initial assumption, give some basic facts regarding large cardinals and forcings.

\item In Section \ref{forcingbasic} and \ref{forcingPalpha} we define the forcing $P_\alpha$ for $\alpha<\kappa$.
The forcing will approximate the main forcing $\mathbb{P}_\kappa$ in the end. 
We show basic properties and discuss on quotients.

\item In Section \ref{theprikryproperty} we prove the Prikry property for $P_\alpha$.

\item In Section \ref{cardinalbehave} we investigate the cardinal behavior after forcing with $P_\alpha$. 
We also investigate the canonical sets $C_\alpha$ and $C_{\alpha/Q}$ derived from generic objects.
These are served as the initial segments of the final club subset of $\kappa$.

\item In Section \ref{mainforcing} we define the final forcing $\mathbb{P}_\kappa$ and show that it gives the main result.

\item In Section \ref{optimal} we show that the assumption in Theorem \ref{mainthm} is optimal.

\end{itemize}

We assume the readers are familiar with forcings, Radin-Magidor forcing from a coherent sequence of measure (for example, the version that appear in Section 5.2 of Gitik's chapter in \cite{handbook}, iterated forcings, and quotient forcings, although we attempt to present in significant details.

\section{Basic facts}
\label{basicfacts}

We start by stating our initial assumption, which we use from Section \ref{basicfacts} to Section \ref{mainforcing}.

\begin{assumpt}
\label{initialassumpt1}
Assume GCH and $\kappa$ is inaccessible.
Assume there is a set $X \subseteq \kappa$, $\circ:X \to \kappa$, and $\vec{U}$ such that

\begin{enumerate}

\item for $\alpha \in X$, $0<\circ(\alpha)<\alpha$.

\item \label{coherence} $\vec{U}=\{U(\alpha,\beta) \mid \alpha \in X, \beta< \circ(\alpha)\}$ is a coherent sequence of normal measures.

\item for $\nu<\kappa$, $\{\alpha \in X \mid \circ(\alpha) \geq \nu\}$ is stationary.

\end{enumerate}

\end{assumpt}

We remind readers the definition of a coherent sequence of measures.
Define $(\alpha_0,\beta_0)<(\alpha_1,\beta_1)$ for each ordered pairs $(\alpha_0,\beta_0)$ and $(\alpha_1,\beta_1)$ if $\alpha_0<\alpha_1$ or ($\alpha_0=\alpha_1$ and $\beta_0<\beta_1$).
Let $\vec{U} \restriction \gamma=\{U(\alpha,\beta)\in \vec{U} \mid \alpha<\gamma\}$ and $\vec{U} \restriction (\alpha,\beta)=\{U(\alpha^\prime,\beta^\prime) \mid (\alpha^\prime,\beta^\prime)<(\alpha,\beta)\}$.
Coherence states that $j:V \to \Ult(V,U(\alpha,\beta))$, then $j(\vec{U}) \restriction (\alpha+1)=\vec{U} \restriction (\alpha,\beta)$. 
It is known in \cite{gitik1999onclosedunboundedsets} that Assumption \ref{initialassumpt1} is sufficient to construct a forcing which adds a club subset of $\kappa$ of ground model regular cardinals, while maintaining inaccessibility of $\kappa$.
We note again that Assumption \ref{initialassumpt1} is slightly stronger than the assumption in Theorem \ref{mainthm}, which is known to be optimal to build a forcing to add a club subset of $\kappa$ containing $V$-regular cardinals, and $\kappa$ is inaccessible in the extension. 
We can construct a forcing that gives the same effect using only the assumption in Theorem \ref{mainthm}.

\textbf{Notations and conventions}: From this point to Section \ref{mainforcing}, we assign some notations and agreements for our convenience.
Some of them are not standard notations.
For each pair of sets $A$ and $B$, we denote $A \sqcup B$ the usual union of $A$ and $B$, whenever $A \cap B= \emptyset$.
We assume that all measure-one sets concentrate on inaccessible cardinals.
Define $\circ(\gamma)=0$ if and only if $\gamma$ is inaccessible and not measurable.
We also assume that $X$ includes all points $\gamma$ with $\circ(\gamma)=0$, so that $X=\{\alpha<\kappa \mid \alpha$ is inaccessible$\}$.
For $\alpha \in X$ with $\circ(\alpha)>0$, let $\vec{U}(\alpha)=\{U(\alpha,\beta) \mid \beta< \circ(\alpha)\}$ and we say that $A \in \vec{U}(\alpha)$ if and only if $A \in \cap_{\beta<\circ(\alpha)} U(\alpha,\beta)$.
If $\circ(\alpha)=0$, $\vec{U}(\alpha)=\{\emptyset\}$. 
We assume that each $A \in \vec{U}(\alpha)$ is written as a disjoint union of $\langle A_\beta \mid \beta<\circ(\alpha) \rangle$ where $A_\beta \in U(\alpha,\beta)$ and for $\gamma \in A_\beta$, $\circ(\gamma)=\beta$.
For each $A \in \vec{U}(\alpha)$ and $\gamma \in A$, we define $A \restriction \gamma=\{\xi \in A \mid \xi<\gamma$ and $\circ(\xi)<\circ(\gamma)\}$ and $A \setminus \gamma=\{\xi \in A \mid \xi>\gamma\}$.
We note that if $\gamma \in A \in \vec{U}(\alpha)$ and $\circ(\gamma)=0$, then $A \restriction \gamma=\emptyset$.
Assume that for $\gamma \in A$, $A \restriction \gamma \in \vec{U}(\gamma)$.
For $P$-name of ordinal $\dot{\nu}$, define $\bar{\nu}=\sup\{\nu \mid \exists p \in P (p\Vdash \dot{\nu}=\check{\nu})\}+1$.
For each function $F$ with domain $A$ of ordinals, and $\gamma$ is an ordinal, let $F \setminus \gamma=F \restriction (A \setminus \gamma)$.
When we force with a forcing notion $P_\alpha$, then the corresponding relations and other relevant objects may be distinguished by the subscript $\alpha$, e.g. $\leq_\alpha$ and $\leq^*_\alpha$, $\Vdash_\alpha$, and so on.
If $\dot{\nu}$ is a $Q$-name of an ordinal which is at least $\alpha$, we put subscripts $\dot{\nu}/Q$ for the forcing $P_{\dot{\nu}}/Q$ e.g. $\leq_{\dot{\nu}/Q}$ and $\leq^*_{\dot{\nu}/Q}$.
 If $Q=P_\alpha$, then we abbreviate the subscript $\dot{\nu}/Q$ by $\dot{\nu}/\alpha$.
 We remove subscripts sometimes if they are clear from the context.
We often omit the check symbols for the names representing ground model elements in a forcing (except in Section \ref{forcingbasic} where we emphasize the places where names are used).
Fix a limit ordinal $\gamma$.
A {\em club} in $\gamma$ is a closed unbounded subset of $\gamma$.
Even though $\cf(\gamma)=\omega$, a cofinal subset of $\gamma$ of order-type $\omega$ is also considered as a club. 

We start off in this section by recalling some known facts, which will be used in further sections.

\begin{prop}
\label{integrate}
Let $A \in \vec{U}(\alpha)$.
For $\gamma \in A$, let $B_\gamma \in \vec{U}(\gamma)$ and $B_\gamma \subseteq A \restriction \gamma$.
Then there is $A^* \in \vec{U}(\alpha)$, such that $A^* \subseteq A$, and for $\gamma \in A^*$, $A^* \restriction \gamma \subseteq B_\gamma$.

\end{prop}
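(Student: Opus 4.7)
The plan is a two-step diagonal construction. The first (and only delicate) step is to use the coherence of $\vec U$ and the ultrapower embeddings $j_\beta : V \to M_\beta := \Ult(V, U(\alpha,\beta))$ for each $\beta < \circ(\alpha)$ to integrate the family $\{B_\gamma : \gamma \in A\}$ into sets that live in $V$ at the level $\alpha$. Concretely, set $B^*_\beta := [\gamma \mapsto B_\gamma]_{U(\alpha,\beta)} \subseteq \alpha$. Since $\gamma \in A_\beta$ has $\circ(\gamma) = \beta$ and $B_\gamma \in U(\gamma,\beta')$ for every $\beta' < \beta$, {\L}o\'{s}'s theorem gives $B^*_\beta \in j_\beta(\vec U)(\alpha,\beta')$ in $M_\beta$, and coherence identifies $j_\beta(\vec U)(\alpha,\beta')$ with the ground model measure $U(\alpha,\beta')$. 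Hence $B^*_\beta \in U(\alpha,\beta')$ for every $\beta' < \beta$. (When $\beta = 0$ the set $B_\gamma$ is forced to be empty and nothing needs to be done.)

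The second step is a standard diagonal intersection. For each $\beta' < \circ(\alpha)$ put
\[ T_{\beta'} := A_{\beta'} \cap \bigcap_{\beta' < \beta < \circ(\alpha)} B^*_\beta, \]
which is in $U(\alpha,\beta')$ because $\circ(\alpha) < \alpha$ and $U(\alpha,\beta')$ is $\alpha$-complete. By construction $T_{\beta'} \subseteq B^*_\beta$ whenever $\beta > \beta'$. Since $j_\beta \restriction V_\alpha$ is the identity, $j_\beta(T_{\beta'}) \cap \alpha = T_{\beta'} \subseteq B^*_\beta = j_\beta(B)_\alpha$, so a second application of {\L}o\'{s} yields
\[ R_{\beta,\beta'} := \{\gamma \in A_\beta : T_{\beta'} \cap \gamma \subseteq B_\gamma\} \in U(\alpha,\beta). \]

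Finally, shrink each $T_\beta$ to $S_\beta := T_\beta \cap \bigcap_{\beta' < \beta} R_{\beta,\beta'}$, which is still in $U(\alpha,\beta)$ by $\alpha$-completeness, and set $A^* := \bigsqcup_{\beta < \circ(\alpha)} S_\beta$. Then $A^* \subseteq A$ and $A^* \in \vec U(\alpha)$ are immediate, and for $\gamma \in S_\beta$ one has $A^* \restriction \gamma = \bigsqcup_{\beta' < \beta}(S_{\beta'} \cap \gamma) \subseteq \bigsqcup_{\beta' < \beta}(T_{\beta'} \cap \gamma) \subseteq B_\gamma$ using the $R_{\beta,\beta'}$. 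The main obstacle is genuinely the first step: without the coherence identification, $B^*_\beta$ would only be measure one for measures in $M_\beta$, and the diagonal could not be closed off inside $V$; $\alpha$-completeness combined with $\circ(\alpha) < \alpha$ then makes the rest routine.
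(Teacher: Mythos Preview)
Your proof is correct and shares its decisive idea with the paper: both identify $B^*_\beta := [\gamma \mapsto B_\gamma]_{U(\alpha,\beta)}$ and use coherence of $\vec U$ to see that $B^*_\beta \in \bigcap_{\beta'<\beta} U(\alpha,\beta')$ in $V$, not merely in $M_\beta$. This is exactly the paper's set $X_1^\beta$, and it is indeed the only nontrivial step.

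Where you diverge is in the combinatorial packaging of the diagonal step. The paper does it in a single stroke: for each $\beta$ it forms the union $X_1^\beta \cup X_2^\beta \cup X_3^\beta$, where $X_2^\beta = \{\gamma \in A : \circ(\gamma)=\beta \text{ and } X_1^\beta \cap \gamma = B_\gamma\}$ (which is in $U(\alpha,\beta)$ by one \L o\'s application, since $[\gamma\mapsto B_\gamma]=X_1^\beta$ and $[\mathrm{id}]=\alpha$) and $X_3^\beta = \{\gamma \in A : \circ(\gamma)>\beta\}$, and then takes $A^* = \bigcap_{\beta<\circ(\alpha)} (X_1^\beta \cup X_2^\beta \cup X_3^\beta)$. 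A $\gamma \in A^*$ with $\circ(\gamma)=\beta$ is forced into $X_2^\beta$, and every $\xi \in A^*\restriction\gamma$ is forced into $X_1^\beta$, giving $A^*\restriction\gamma \subseteq X_1^\beta \cap \gamma = B_\gamma$ directly. Your route instead builds the $T_{\beta'}$ first, then reflects each of them separately via the sets $R_{\beta,\beta'}$, and finally intersects again to form $S_\beta$. This works, but it costs you an extra round of intersections and a second explicit \L o\'s argument; the paper's three-part union hides the same bookkeeping more economically. Neither approach gains generality over the other---both rely equally on $\circ(\alpha)<\alpha$ and $\alpha$-completeness---so the difference is purely organizational.
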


\begin{proof}

For each $\beta<\circ(\alpha)$, we define $X_1^\beta \cup X_2^\beta \cup X_3^\beta$ as follows.
Let $X_1^\beta=j_{U(\alpha,\beta)}(\gamma \mapsto B_\gamma)(\alpha)$.
Then $X_1^\beta \in j_{U(\alpha,\beta)}(\vec{U})(\alpha)=\vec{U}(\alpha) \restriction \beta$.
Also, $X_2^\beta:=\{\gamma \in A \mid \circ(\gamma)=\beta$ and $X_1^\beta \cap \gamma=B_\gamma\} \in U(\alpha,\beta)$.
Finally, let $X_3^\beta=\{\gamma \in A \mid \circ(\gamma)>\beta\}$.
Then $X_3^\beta \in \vec{U}(\alpha) \setminus (\beta+1)$.
Let $A^*=\cap_{\beta<\circ(\alpha)} (X_1^\beta \cup X_2^\beta \cup X_3^\beta)$.
By $\alpha$-completeness of measures, $A^* \in \vec{U}(\alpha)$.
Furthermore, we have that for $\gamma \in A^*$ with $\circ(\gamma)=\beta$, $A^* \restriction \gamma \subseteq X_1^\beta \cap \gamma \subseteq B_\gamma$.

\end{proof}

\begin{prop}
\label{measureoneinforcing}
Let $P \in V_{\xi+1}$ be a forcing notion and $G$ be $P$-generic.
Then there is a coherent sequence of measures $\langle U(\alpha,\beta) \mid \beta <\circ(\alpha), \alpha>\xi,$ and $ \alpha \in X \rangle$, each $U^\prime(\alpha,\beta)$ extends $U(\alpha,\beta)$.

\end{prop}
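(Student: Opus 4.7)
The plan is a direct application of the Levy--Solovay extension technique, with the additional task of checking that coherence survives the lift.

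First, for each pair $(\alpha,\beta)$ with $\alpha \in X$, $\alpha>\xi$, and $\beta<\circ(\alpha)$, I would set
$$U'(\alpha,\beta)=\{X\subseteq\alpha : X\in V[G],\ \exists Y\in U(\alpha,\beta)\ (Y\subseteq X)\}.$$
Since $P\in V_{\xi+1}$ and $\alpha>\xi$ is inaccessible, $|P|<\alpha$, so every subset of $\alpha$ in $V[G]$ has a nice $P$-name of size less than $\alpha$. Together with the $\alpha$-completeness and normality of $U(\alpha,\beta)$, the standard Levy--Solovay argument shows $U'(\alpha,\beta)$ is a normal, $\alpha$-complete ultrafilter on $\alpha$ in $V[G]$, and it trivially extends $U(\alpha,\beta)$.

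Next, I would lift the ground-model embedding. Fix $(\alpha,\beta)$ and let $j:=j_{U(\alpha,\beta)}:V\to M$. Because $P\in V_\xi\subseteq V_\alpha\subseteq M$ and $\crit(j)=\alpha$, we have $j(P)=P$ and $j$ lifts to $\tilde j:V[G]\to M[G]$ with $\tilde j(G)=G$. A routine calculation shows that $\tilde j$ is the ultrapower embedding of $V[G]$ by $U'(\alpha,\beta)$.

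The last and only real step is to verify coherence, namely $\tilde j(\vec U')\restriction(\alpha+1)=\vec U'\restriction(\alpha,\beta)$. Since $M$ is closed under $\alpha$-sequences, and since every subset of $\alpha'\leq\alpha$ in $V[G]$ has a nice $P$-name of hereditary size $<\alpha$ (which must therefore lie in $M$), one gets $\mathcal P(\alpha')^{V[G]}=\mathcal P(\alpha')^{M[G]}$ for every $\alpha'\le\alpha$. Consequently the extension operation ``take all $V[G]$-supersets of some element of the ground-model measure'' produces the same ultrafilter whether evaluated in $V[G]$ or in $M[G]$ for each $(\alpha',\beta')\le(\alpha,\beta)$. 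Combined with the coherence of $\vec U$ in $V$, i.e.\ $j(\vec U)\restriction(\alpha+1)=\vec U\restriction(\alpha,\beta)$, elementarity of $\tilde j$ then gives exactly the desired identity, and doing this for every $(\alpha,\beta)$ yields a coherent sequence $\vec U'$ in $V[G]$.

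The conceptual content is light and essentially Levy--Solovay. The only point requiring care---the main obstacle---is the verification that the power set of each $\alpha'\le\alpha$ agrees between $V[G]$ and $M[G]$, so that the extension procedure commutes with $\tilde j$; this is what transfers coherence from $\vec U$ to $\vec U'$.
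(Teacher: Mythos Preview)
Your proof is correct and follows essentially the same approach as the paper: define each $U'(\alpha,\beta)$ as the Levy--Solovay extension, lift $j_{U(\alpha,\beta)}$ to $\tilde j:V[G]\to M[G]$, and use $\mathcal P(\alpha')^{V[G]}=\mathcal P(\alpha')^{M[G]}$ for $\alpha'\le\alpha$ (the paper phrases this as $V_{\alpha+1}^{M[G]}=V_{\alpha+1}^{V[G]}$) together with coherence of $\vec U$ to deduce coherence of $\vec U'$. Your additional remark that $\tilde j$ is the ultrapower by $U'(\alpha,\beta)$ is true but not needed for the argument, and the paper omits it.
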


\begin{proof}

In $V[G]$, for $\rho>\xi$, define $U^\prime(\rho,\gamma)=\{A \subseteq \rho \mid \exists B \in U(\rho,\gamma) (B \subseteq A)\}$.
For each ordered pair $(\alpha,\beta)$, $\alpha>\xi$, let $j:=j_{U(\alpha,\beta)}: V \to M$.
Since $P \in V_{\xi+1}$ and $\alpha>\xi$ is inaccessible, $j$ extends to $j:V[G] \to M[G]$.
Since $\dom(\vec{U}^\prime)=\{(\rho,\xi) \in \dom(\vec{U}) \mid \rho>\xi\}$, $\dom(j(\vec{U}^\prime)) \restriction \alpha+1=\dom(\vec{U}^\prime) \restriction (\alpha,\beta)$.
Since $\crit(j)=\alpha$, $M[G] \subseteq V[G]$, and $V_{\alpha+1}^{M[G]}=V_{\alpha+1}^{V[G]}$,
by following the coherence of $\vec{U}$, we have that $j(\vec{U}^\prime) \restriction \alpha+1=\vec{U}^\prime \restriction (\alpha,\beta)$.

\end{proof}

Observe that in Proposition \ref{measureoneinforcing}, $U(\rho,\xi)$ is {\em dense} in $U^\prime(\rho,\xi)$: $U(\rho,\xi) \subseteq U^\prime(\rho,\xi)$ and for $A \in U^\prime(\rho,\xi)$, there is $B \in U(\rho,\xi)$ such that $B \subseteq A$.

\begin{prop}
\label{measureinground}
Let $P \in V_\alpha$ and $\alpha \in X$.
If $\dot{A}$ is a $P$-name of an element in $\vec{U}(\alpha)$.
Then there is $B \in \vec{U}(\alpha)$ such that $\Vdash_P \check{B} \subseteq \dot{A}$.
\end{prop}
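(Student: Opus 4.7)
The plan is to produce, for each $\beta<\circ(\alpha)$ separately, a ground-model set $B_\beta\in U(\alpha,\beta)$ concentrated on the level $\{\gamma:\circ(\gamma)=\beta\}$ with $\Vdash_P \check{B}_\beta\subseteq\dot{A}$, and then set $B:=\bigsqcup_{\beta<\circ(\alpha)}B_\beta$. Since $B\supseteq B_{\beta'}\in U(\alpha,\beta')$ for every $\beta'<\circ(\alpha)$, the union automatically lies in each $U(\alpha,\beta')$, so $B\in\vec{U}(\alpha)$; and forcing $B_\beta\subseteq\dot{A}$ for each $\beta$ forces $B\subseteq\dot{A}$. The level-by-level disjointness ensures the output also respects the decomposition convention agreed upon in the notation section.

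The construction of $B_\beta$ proceeds by reflection along a maximal antichain. The hypothesis that $\dot{A}$ is forced to lie in $\vec{U}(\alpha)$ unpacks in $V[G]$, via Proposition \ref{measureoneinforcing}, as $\dot{A}^G\in U'(\alpha,\beta)$ for every $\beta<\circ(\alpha)$. By the density of $U(\alpha,\beta)$ inside $U'(\alpha,\beta)$ observed right after Proposition \ref{measureoneinforcing}, the set
\[
D_\beta:=\{p\in P : \exists C\in U(\alpha,\beta)\ (p\Vdash_P \check{C}\subseteq\dot{A})\}
\]
is dense in $P$. Fix a maximal antichain $\mathcal{A}_\beta\subseteq D_\beta$ and a witness $C_p\in U(\alpha,\beta)$ for each $p\in\mathcal{A}_\beta$. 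Since $P\in V_\alpha$ and $\alpha$ is inaccessible, $|\mathcal{A}_\beta|\le|P|<\alpha$, so $\alpha$-completeness of $U(\alpha,\beta)$ yields
\[
B_\beta:=\{\gamma<\alpha:\circ(\gamma)=\beta\}\cap\bigcap_{p\in\mathcal{A}_\beta}C_p \in U(\alpha,\beta).
\]
A routine genericity argument (every generic filter meets $\mathcal{A}_\beta$ in some $p$, for which $C_p\subseteq\dot{A}^G$) gives $\Vdash_P \check{B}_\beta\subseteq\dot{A}$.

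The only substantive step is the opening translation: one must interpret ``$\dot{A}$ is a name for an element of $\vec{U}(\alpha)$'' through Proposition \ref{measureoneinforcing} as $\dot{A}^G\in\bigcap_\beta U'(\alpha,\beta)$ and then use density to pull witnesses back into $V$. After that, the proof is a standard maximal-antichain reflection combined with $\alpha$-completeness, and no deeper obstacle appears.
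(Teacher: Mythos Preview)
Your argument is correct. The paper's own proof is a one-liner: it simply takes
\[
B=\bigcap\{A' : \exists p\in P\ (p\Vdash_P \dot{A}=\check{A}')\},
\]
observing that there are at most $|P|<\alpha$ many such $A'$, so $\alpha$-completeness of each $U(\alpha,\beta)$ gives $B\in\vec{U}(\alpha)$. This reads the hypothesis as ``$\dot{A}$ is forced to be a ground-model element of $\vec{U}(\alpha)$'', so that densely many conditions decide its exact value. Your version instead interprets the hypothesis through Proposition~\ref{measureoneinforcing} as $\dot{A}^G\in\vec{U}'(\alpha)$, pulls back ground-model witnesses via the density of $U(\alpha,\beta)$ in $U'(\alpha,\beta)$, and then runs a maximal-antichain argument level by level. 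The core mechanism---intersecting fewer than $\alpha$ many measure-one sets---is identical; what you add is a careful unpacking of the measure-one condition in the extension and the level-decomposition bookkeeping. Your route is more robust (it covers the case where $\dot{A}$ genuinely names a new set), which is in fact how the proposition is later invoked in the Prikry-property proof, while the paper's route is shorter when the name is already for a ground-model set.
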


\begin{proof}

Take $B=\cap \{A^\prime \mid \exists p \in P (p \Vdash_P \dot{A}=\check{A}^\prime \rangle\}$.
Then $B \in \vec{U}(\alpha)$ since $|P|<\alpha$.

\end{proof}

We end this section by recalling that if for $\gamma<\alpha$, $A_\gamma \in \vec{U}(\alpha)$, the {\em diagonal intersection of $\langle A_\gamma \mid \gamma<\alpha \rangle$}, denoted by $\triangle_{\gamma<\alpha} A_\gamma$, is the set $\{\gamma<\alpha \mid \forall \xi<\gamma (\gamma \in A_\xi)\}$.
It is easy to check that $\triangle_{\gamma<\alpha}A_\gamma \in \vec{U}$.
A variation of the diagonal intersection is also defined: for $\vec{\gamma} \in [\alpha]^n$, let $A_{\vec{\gamma}} \in \vec{U}(\alpha)$, we define $\triangle_{\vec{\gamma}} A_{\vec{\gamma}}=\triangle_{\xi<\alpha} \cap_{\{\vec{\gamma} \mid \max(\vec{\gamma})=\xi\} } A_{\vec{\gamma}}$. 

\section{The forcing $\mathbb{P}_\alpha$: basic cases}
\label{forcingbasic}

In Section \ref{forcingbasic} and \ref{forcingPalpha} we build forcings $P_\alpha$ for $\alpha \in X$ by recursion.
In this section we define $P_\alpha$ for the first two ordinals in $X$.
The reason we define forcings at two levels is that we want to introduce quotient forcings, which are more meaningful when we consider two levels.
All the forcings and the quotients in this section are equivalent to the trivial forcing.
However, the construction of $P_\alpha$ for $\alpha \in X$ and their quotients which appear in both sections will satisfy a long list of the properties, which will be verified inductively.
We also introduce all necessary notions which will appear in the list of properties, which will be elaborated later in Proposition \ref{theproperties}.

\begin{defn}{\textbf{(The first basic case)}}

Let $\alpha_0=\min(X)$.

\begin{enumerate}

\item Define $P_{\alpha_0}$ as $\{1_{\alpha_0}$, $p_0 \}$ where $p_0=\langle \alpha_0,\emptyset,\emptyset \rangle$.
Define the forcing relation $\leq$ on $P_{\alpha_0}$ so that $p_0$ is stronger than $1_{\alpha_0}$.
Let $\leq^*=\leq$.
We call  $p_0$ a {\em pure} condition.
There are no {\em impure} conditions.
The {\em stems} for both conditions are empty.
Suppose $G$ is $P_{\alpha_0}$-generic, then $G=P_{\alpha_0}$. 
In $V[G]$, define $C_{\alpha_0}=\{\alpha_0\}$, and let $\dot{C}_{\alpha_0}$ be the $P_{\alpha_0}$-name of such $C_{\alpha_0}$. 

\item Assume $G$ is $P_{\alpha_0}$-generic, define $(P_{\alpha_0}[G], \leq, \leq^*)$ as the trivial forcing $(\{\emptyset\},\leq,\leq^*)$ in $V[G]$.
Let $P_{\check{\alpha}_0}/P_{\alpha_0}$ be a $P_{\alpha_0}$-name of the forcing $P_{\alpha_0}[G]$, where $\check{\alpha}_0$ is the $P_{\alpha_0}$-name.
If $H$ is $P_{\alpha_0}[G]$-generic, in $V[G][H]$, define $C_{\alpha_0/\alpha_0}=\emptyset$, and let $\dot{C}_{\check{\alpha}_0/\alpha_0}$ be a $P_{\alpha_0}* P_{\check{\alpha}_0}/P_{\alpha_0}$-name for such $C_{\alpha_0/\alpha_0}$.

\item If $G$ is $P_0$-generic, define $P_{\check{\alpha}_0}/P_{\alpha_0}[G]$ as $P_{\alpha_0}[G]$.

\end{enumerate}
\end{defn}

\begin{defn}{\textbf{(The second basic case)}}
\label{basicsecondcase}

Let $\alpha_0=\min(X)$ and $\alpha_1=\min(X \setminus (\alpha_0+1))$.

\begin{enumerate}

\item Define $P_{\alpha_1}$ as $\{1_{\alpha_1},p_1,p_{01}\}$,  $p_1=\langle \alpha_1,\emptyset,\emptyset\rangle$, $p_{01}=\langle \alpha_0,\emptyset,\emptyset,\check{\alpha}_0,\check{\emptyset}\rangle^\frown \langle \alpha_1,\emptyset,\emptyset \rangle$ where the checks are considered as a $P_{\alpha_0}$-name.
Define the forcing relation $\leq$ on $P_{\alpha_1}$ so that $1_{\alpha_1}$ is the weakest condition, and $p_1$ and $p_{01}$ are not comparable.
Let $\leq^*=\leq$.
We call $p_1$ a {\em pure} condition and $p_{01}$ an {\em impure} condition.
The {\em stems} for $1_{\alpha_1}$ and $p_1$ are empty, and the stem for $p_{01}$ is $\langle \alpha_0,\emptyset,\emptyset,\check{\alpha}_0,\check{\emptyset} \rangle$.
If $G$ is $P_{\alpha_0}$-generic, then either $G=\{1_{\alpha_1},p_1\}$, which we define the corresponding $C_{\alpha_1}=\{\alpha_1\}$, otherwise, $G=\{1_{\alpha_1},p_{01}\}$, in which case, we define $C_{\alpha_1}=\{\alpha_0,\alpha_1\}$.
Let $\dot{C}_{\alpha_1}$ be a $P_{\alpha_1}$-name for such $C_{\alpha_1}$.

\item Assume $G$ is $P_{\alpha_0}$-generic, define $P_{\alpha_1}[G]$ as $\{1_{\alpha_1}, \langle \alpha_0, \emptyset \rangle^\frown \langle \alpha_1,\emptyset,\emptyset \rangle\}$.
We consider $1_{\alpha_1}$ as the weakest condition and $\leq^*=\leq$.
If $H$ is $P_{\alpha_1}[G]$-generic, define $C_{\alpha_1/\alpha_0}=\{\alpha_1\}$, and let $\dot{C}_{\check{\alpha}_1/\alpha_0}$ be a $P_{\alpha_0}* P_{\check{\alpha}_1/\alpha_0}$-name for such $C_{\alpha_1/\alpha_0}$.
We do not define pure, impure conditions, and stems in this case.
Define $P_{\check{\alpha}_1}/P_{\alpha_0}$ as the $P_{\alpha_0}$-name of the forcing $P_{\alpha_1}[\dot{G}]$ where $\dot{G}$ is the canonical name for generic for $P_{\alpha_0}$, where the check is the $P_{\alpha_0}$-name.

\item Assume $G$ is $Q:=P_{\alpha_0} * P_{\check{\alpha}_0}/P_{\alpha_0}$-generic where the check is the $P_{\alpha_0}$-name, define $P_1[G]=\{1_{\alpha_1},\langle \alpha_1,\emptyset,\emptyset \rangle\}$ (in $V[G]$), with the induced ordering in $P_1$, i.e., $1_{\alpha_1}$ is the weakest element.
Define $\langle \alpha_1,\emptyset,\emptyset \rangle[G]=\langle \alpha_1,\emptyset,\emptyset \rangle$.
The condition $\langle \alpha_1,\emptyset,\emptyset \rangle$ is a {\em pure} condition.
The {\em stems} of both conditions are empty.
There are no impure conditions.
Define $P_{\alpha_1}/Q$ as the $Q$-name of the forcing $P_{\alpha_1}[\dot{G}]$ where $\dot{G}$ is the canonical name for generic for $Q$.
Define $C_{\alpha_1/Q}=\{\alpha_1\}$.
Let $\dot{C}_{\alpha_1/Q}$ be a $P_Q * P_{\check{\alpha}_1/Q}$ be a name for such $C_{\alpha_1/Q}$.

\item Assume $G$ is $P_{\alpha_1}$-generic, let $P_{\alpha_1}[G]$ as the trivial forcing $(\{\emptyset\},\leq,\leq^*)$ in $V[G]$.
Define $P_{\check{\alpha}_1}/P_{\alpha_1}$ (where the check is the $P_{\alpha_1}$-name) as the $P_{\alpha_1}$-name of $P_{\alpha_1}[G]$.
If $H$ is $P_{\alpha_1}[G]$-generic, in $V[G][H]$, define $C_{\alpha_1/\alpha_1}=\emptyset$.
Let $\dot{C}_{\check{\alpha_1}/{\alpha_1}}$ be a $P_{\alpha_1} * P_{\check{\alpha}_1}/P_{\alpha_1}$-name for such $C_{\alpha_1/\alpha_1}$.

\item For $i=0,1$, if $G$ is $P_i$-generic, define $P_{\check{\alpha}_1}/P_{\alpha_i}[G]=P_{\alpha_1}[G]$.

\end{enumerate}

Note that $p_{01}$ in Definition \ref{basicsecondcase} is considered in the following manner.
First we consider the first tuple  of $p_{01}$, which is $\langle \alpha_0,\emptyset,\emptyset,\check{\alpha}_0,\check{\emptyset} \rangle$.
The first three entries $\langle \alpha_0,\emptyset,\emptyset \rangle$ is an element in $P_{\alpha_0}$. 
The fourth entry $\check{\alpha}_0$ is considered as a $P_{\alpha_0}$-name of an ordinal in the interval $[\alpha_0,\alpha_1) \cap X$, which is just $\{\alpha_0\}$.
It serves as a parameter in the quotient forcing the fifth entry is living in.
In this case, it is $P_{\check{\alpha}_0}/P_{\alpha_0}$, where the denominator is the first entry of the tuple. 
The last entry $\check{\emptyset}$ is a $P_{\alpha_0}$-name of the element in $P_{\check{\alpha}_0}/P_{\alpha_0}$.
Hence, $\langle \alpha_0,\emptyset,\emptyset,\check{\alpha}_0,\check{\emptyset} \rangle$ is regarded as a condition in $P_{\alpha_0} * P_{\check{\alpha}_0}/P_{\alpha_0}$.
When $G$ is $P_{\alpha_0}$-generic, we  consider $P_{\alpha_1}[G]$ as the ``leftover" of the conditions in $P_{\alpha_1}$ whose initial segments are in $G$. 
For example, the condition $p_{01}=\langle \alpha_0,\emptyset,\emptyset, \check{\alpha}_0,\check{\emptyset} \rangle^\frown \langle \alpha_1,\emptyset,\emptyset \rangle$ can be factored to $\langle \alpha_0,\emptyset,\emptyset \rangle \in P_{\alpha_0}$, and the rest: $\langle \check{\alpha}_0,\check{\emptyset}\rangle^\frown \langle \alpha_1,\emptyset,\emptyset \rangle$.
If $G$ is $P_{\alpha_0}$-generic, then the non-trivial element in $P_{\alpha_1}[G]$ is $\langle \check{\alpha}_0,\check{\emptyset}\rangle ^\frown \langle \alpha_1,\emptyset,\emptyset \rangle[G]=\langle \alpha_0,\emptyset\rangle ^\frown \langle \alpha_1,\emptyset,\emptyset \rangle$ which is considered as a condition in $P_{\alpha_0}[G] \times P_{\alpha_1}[G]$.

\end{defn}

We now list all the properties which we need to be true in the more general settings.
Stronger versions of some properties are true, but we state in a way that is aligned together with properties in general cases, i.e. Proposition \ref{theproperties}.
Let $\alpha_0$ and $\alpha_1$ be as in Definition \ref{basicsecondcase}.
Let $0 \leq i \leq j \leq 1$.
\begin{itemize}

\item $(P_{\alpha_0},\leq,\leq^*)$ and $(P_{\alpha_1},\leq,\leq^*)$ have the Prikry property , preserve all cardinals, and GCH.
$\Vdash_{\alpha_i} (P_{\check{\alpha}_j}/P_{\alpha_i},\leq,\leq^*)$ has the Prikry property, preserves all cardinals, and GCH, and $\Vdash_{\alpha_i} P_{\check{\alpha}_i}/P_{\alpha_i}$ is the trivial forcing.

\item $|P_{\alpha_0}| \leq \alpha_0^+$, $|P_{\alpha_1}| \leq \alpha_1^+$ (they are actually finite). 
Hence, they are $\alpha_0^{++}$-c.c., and $\alpha_1^{++}$-c.c., respectively.
$\leq_{\alpha_0}^*$ is $\alpha_0$-closed and $\leq_{\alpha_1}^*$ is $\alpha_1$-closed (both are $\infty$-closed).
Furthermore, $\Vdash_{\alpha_i} (P_{\check{\alpha}_j}/P_{\alpha_i}, \leq^*)$ is $\min(X \setminus (\alpha_i+1))-$closed (it is $\infty$-closed).
It is easy to check that $\Vdash_{\alpha_i} |P_{\check{\alpha}_j}/P_{\alpha_i}| \leq \alpha_j^+$ (it is finite), hence $\alpha_j^{++}$-c.c., and $\leq^*$ is $\alpha_j$-closed (it is $\infty$-closed).
Hence, by counting nice names, the two-step iteration $P_{\alpha_i} * P_{\check{\alpha}_j}/P_{\alpha_i}$ has size $\alpha_j^+$ in $V$, hence $\alpha_j^{++}$-c.c., has the Prikry property, preserves all cardinals, and GCH.

\item Consider the condition $p_{01} \in P_{\alpha_1}$ as in Definition \ref{basicsecondcase}.
Set $P^s=P_{\alpha_0} * P_{\check{\alpha}_0}/P_{\alpha_0}$.
Let $G$ be $P^s$-generic (it is unique).
Then, in $V[G]$, $P_{\alpha_1}[G]$ has the Prikry property, preserves all cardinals, GCH, $|P_{\alpha_1}[G]| \leq \alpha_1^+$, so $\alpha_1^{++}$-c.c.
Notice that $p_{01}=s^\frown p_1$, where $s \in G$, and $p_1=p_1[G] \in P_{\alpha_1}[G]$.

\item If $p \in P_{\alpha_i}$, $\gamma<\alpha_i$, $\langle p_\beta \mid \beta< \gamma\rangle$ is a sequence of conditions such that $p_\beta \leq^*_{\alpha_i} p$ for all $\beta$, then there is $p^* \leq^*_{\alpha_i} p$ such that $p^* \leq^*_{\alpha_i} p_\beta$ for all $\beta$.
If $\Vdash_i \dot{q} \in P_{\check{\alpha}_j}/P_{\alpha_i}$, $\gamma<\min(X \setminus (\alpha_i+1))$, and for $\beta<\gamma$, there are $p_\beta, \dot{q}_\beta$ such that $p_\beta \in P_{\alpha_i}$, $p_\beta \Vdash_i \dot{q}_\beta \leq^* \dot{q}$, then there is $\dot{q}^*$ such that $\Vdash \dot{q}^* \leq^* \dot{q}$ and for all $\beta$, $p_\beta \Vdash \dot{q}^* \leq^* q_\beta$.

\item Let $p_{01} \in P_{\alpha_1}$ be as before.
Set $Q=P_{\alpha_0} * P_{\check{\alpha}_0}/P_{\alpha_0}$ and
$s:=\stem(p_{01})=\langle \alpha_0,\emptyset,\emptyset,\check{\alpha}_0,\check{\emptyset} \rangle \in Q$. 
Set $P^s=P_{\alpha_0} * P_{\check{\alpha}_0}/P_{\alpha_0}$.
Let $G$ be $P^s$-generic, and $H$ be $P_{\alpha_1}[G]$-generic containing $\langle \alpha_1,\emptyset,\emptyset \rangle$.
By setting $I=\{1_{\alpha_1},p_{01} \}$, we have that $I$ is $P_{\alpha_1}$-generic, $V[G][H]=V[I]$.
Conversely, assume $I$ is $P_{\alpha_1}$-generic containing $p_{01}$.
Let $G$ be $P^s$-generic, and $H$ be $P_1[G]$-generic (these are unique), then $V[G][H]=V[I]$.
Notice that $p_{01}=p_{01} \restriction Q {}^\frown p_1$ where $p_{01} \restriction Q=\stem(p_{01}) \in Q$, and $p_1=p_1[G] \in P_{\alpha_1}[G]$.

\end{itemize}

\begin{defn}

Let $Q=P_{\alpha_0}$ or $P_{\alpha_0}* P_{\check{\alpha}_0}/P_{\alpha_0}$ and $G$ be $Q$-generic.
Define $\Vdash_{P_{\alpha_1}}[G] \dot{q} \in P_{\check{\alpha}_1}/P_{\alpha_1}[G]$ iff $\dot{q} \in (P_{\check{\alpha}_1})[\dot{I}]$ where $\dot{H}$ is the name for $P_{\alpha_1}[G]$-generic, and $\dot{I}$ is the corresponding canonical name for $P_{\alpha_1}$-generic, i.e. $\Vdash_{P_{\alpha_1}[G]} \dot{I}=G * \dot{H}$.

\end{defn}

\begin{itemize}

\item If $G$ is $P_{\alpha_0}$-generic, then $C_{\alpha_0}=\{\alpha_0\} \subseteq \alpha_0+1$, $C_{\alpha_0} \setminus \{\alpha_0\}=\emptyset$ is a closed bounded subset of $\alpha_0$.
If $G$ is $P_{\alpha_1}$-generic, then $C_{\alpha_1}$ is either $\{\alpha_1\}$ or $\{\alpha_0,\alpha_1\}$, both are subsets of $\alpha_1+1$.
$C_{\alpha_1} \setminus \{\alpha_0\}$ is either $\emptyset$ or $\{\alpha_0\}$ which are closed bounded subsets of $\alpha_1$.
$C_{\alpha_i/\alpha_i}=\emptyset$.
$C_{\alpha_1/\alpha_0}=\{\alpha_1\} \subseteq \alpha_1+1$, and $C_{\alpha_1/\alpha_0} \setminus \{\alpha_1\}=\emptyset$ is a closed bounded subset of $\alpha_1$.
All regular cardinals are regular in these extensions.

\item Consider $p_{01} \in P_{\alpha_1}$ as before, $G$ is $Q:=P_{\alpha_0} * P_{\check{\alpha}_0}/P_{\alpha_0}$-generic, $H$ is $P_{\alpha_1}[G]$-generic, and $I$ be the canonical $P_{\alpha_1}$-generic such that $V[G][H]=V[I]$.
We see that in $V[I]$, $C_{\alpha_0}=\{\alpha_0\}$, $C_{\alpha_0/\alpha_0}=\{\emptyset\}$, $C_{\alpha_1/Q}=\{\alpha_1\}$.
Note that $I$ is $P_{\alpha_1}$-generic containing $p_{01}$ so that $C_{\alpha_1}=\{\alpha_0,\alpha_1\}=C_{\alpha_0} \sqcup C_{\alpha_0/\alpha_0} \sqcup C_{\alpha_1/Q}$.

\end{itemize}

\section{The forcings $P_\alpha$: the general cases}
\label{forcingPalpha}

We now state a list of properties that we wish to be true in general.
Note that all the properties are true in our basic cases (we did not prove the properties for every possible case, but one can simply apply the arguments from Section \ref{forcingbasic} to check all the cases).

\begin{prop}{\textbf{(The inductive assumptions)}}
\label{theproperties}

Fix $\alpha \in X$, and $P_\alpha$-name $\dot{\nu}$ with $\Vdash_\alpha \dot{\nu} \in X$ and $\dot{\nu} \geq \alpha$.
The following properties hold:

\begin{enumerate}

\item \label{prikry} $(P_\alpha,\leq_\alpha,\leq^*_\alpha)$ has the Prikry property, preserves all cardinals, and GCH.
 $(P_{\dot{\nu}}/P_\alpha, \leq_{\dot{\nu}/\alpha},\leq^*_{\dot{\nu}/\alpha})$ is forced by $P_\alpha$ to have the Prikry property, preserve all cardinals, and GCH.
 $\Vdash_\alpha \dot{\nu}=\alpha$ implies $(P_{\dot{\nu}}/P_\alpha,\leq,\leq^*)$ is the trivial forcing.

\item \label{ccandclosed} $|P_\alpha| \leq \alpha^+$, hence $\alpha^{++}$-c.c. $(\{p \in P_\alpha \mid p$ is pure$\},\leq^*)$ is $\alpha$-closed  $\Vdash_\alpha |P_{\dot{\nu}}/P_\alpha| \leq \dot{\nu}^+$, hence $\dot{\nu}^{++}$-c.c., $(\{p \in P_{\dot{\nu}}/P_\alpha \mid p$ is pure$\},\leq^*)$ is $\dot{\nu}$-closed.

Recall from our notation that $\bar{\nu}=\sup\{\nu \mid \exists p \in P_\alpha(p \Vdash_\alpha \dot{\nu}=\check{\nu})\}+1$.
Note that $\bar{\nu}<\kappa$ since $|P_\alpha|<\kappa$ and $\dot{\nu}$ is forced to be less than $\kappa$.
By the basic facts on two-step iteration, and GCH, the forcing $P_\alpha * P_{\dot{\nu}}/P_\alpha$ has size at most $\bar{\nu}^+$, has $\bar{\nu}^{++}$-c.c., preserves cardinals and GCH.
Furthermore, $(\{ p \in P_\alpha \mid ((\stem(p)_0)_0)=\gamma\},\leq^*)$ is $\gamma$-closed and $\Vdash_\gamma (P_{\dot{\nu}}/P_\alpha, \leq^*)$ is $\min(X \setminus (\alpha+1))$-closed.

\item \label{factor1} Let $Q$ be $P_\beta$ or $P_\beta * P_{\dot{\gamma}}/P_\beta$ where $\beta<\alpha$, $\Vdash_\beta \beta \leq \dot{\gamma}<\alpha$.
We define 
\begin{align*} 
& \Vdash_Q (\dot{q} \in P_\alpha/Q \Leftrightarrow \exists p_0^\frown p_1 \in Q (p_0 \in Q \text{ and } p_1[\dot{G}]=\dot{q}))\\
\text{ and }&  (\Vdash_Q \dot{q} \in P_\alpha/Q) \text{ iff for every } Q\text{-generic } G, \dot{q}[G] \in P_\alpha[G]. 
\end{align*}
 (the notion $P_\alpha[G]$ and $p_1[G]$ will be explained further in Definition \ref{quotient}).
Then $\Vdash_Q (P_\alpha/Q,\leq,\leq^*)$ has the Prikry property,  preserves all cardinals, GCH.
With nice names, $|P_\alpha/Q|^V \leq \alpha^+$, and so $(P_\alpha[G],\leq)$ is $\alpha^{++}$-c.c.
If pure conditions are defined in $P_\alpha[G]$, then $(\{p \in P_\alpha[G] \mid p$ is pure$\},\leq^*)$ is $\alpha$-closed.
 If $Q$ is $P_\alpha$, then $P_\alpha/P_\alpha$ is the trivial forcing.

\item \label{nameclosed} 
Let $p \in P_\alpha$, $\stem(p_0)_0=\gamma$.
Suppose that $\beta<\gamma$ and for all $\xi<\beta$, $p_\xi \leq^*_\alpha p$, then there is $p^* \leq^*_\alpha p$ such that for all $\xi<\beta$, $p^* \leq^*_\alpha p_\xi$.
Furthermore, let $\Vdash_\alpha \dot{q} \in P_{\dot{\nu}}/P_\alpha$.
Let $\beta^\prime<\min(X \setminus (\alpha+1))$.
Suppose that for $\xi<\beta$, there are $p_\xi \in P_\alpha$ and $\dot{q}_\xi$ such that $p_\xi \Vdash_\alpha \dot{q}_\xi \leq^* \dot{q}$.
Then there is $\dot{q}^*$ such that $\Vdash_\alpha \dot{q}^* \leq^* \dot{q}$ and for all $\xi$, $p_\xi \Vdash_\alpha \dot{q}^* \leq^* \dot{q}_\xi$. 

\item \label{factor2} With the notions in (\ref{factor1}), let $H$ be $V[G]$-generic over  $P_\alpha[G]$.
Then there is a canonical generic $I$ over $P_\alpha$ such that $V[G][H]=V[I]$.
$I$ is defined as $\{p \in P_\alpha \mid p$ is of the form $p_0^\frown p_1$, $p_0 \in G$, $p_1[G] \in H\}$.
Conversely, let $Q$ be the forcing $P_\beta$ or $P_{\dot{\gamma}}/P_\beta$ where $\Vdash_\beta \beta \leq \dot{\gamma}<\alpha$.
Let $I$ be $P_\alpha$-generic, $p \in I$, and $p \restriction Q \in Q$ exists.
Define

\begin{center}

$G:=\{p^\prime \restriction Q \mid p^\prime \in I$ and $p^\prime \restriction Q$ exists$\}$

\end{center} 

is $Q$-generic, 

\begin{center}

$H:=\{(p^\prime[G] \mid \exists t \in G(t^\frown  p^\prime \in I)\}$

\end{center} 

is $V[G]$-generic over $P_\alpha[G]$, and $V[G][H]=V[I]$.
This makes $P_\alpha$ to be equivalent to $Q * P_\alpha/Q$.

\item \label{twolevelquotient} With the notions in (\ref{factor1}), $(P_{\dot{\nu}}/P_\alpha)[G]:=P_{\dot{\nu}[G]}[G]/P_\alpha[G]$ is also defined ($\dot{\nu}[G]$ is a $P_\alpha[G]$-name) as follows: $\Vdash_{P_\alpha[G]} \dot{q} \in (P_{\dot{\nu}}/P_\alpha)[G]$ iff $\dot{q} \in (P_{\dot{\nu}})[G][\dot{H}]$ where $\dot{H}$ is the canonical name for $P_\alpha[G]$-generic.
If $I$ is $P_\alpha$-generic, we define $P_{\dot{\nu}}[I]/P_\alpha[I]$ as $P_{\dot{\nu}[I]}[I]$, which was defined in (\ref{factor2}).

\item \label{club} The notions $\dot{C}_\alpha$ and $\dot{C}_{\dot{\nu}/\alpha}$ are sets defined from generics over $P_\alpha$ and $P_{\dot{\nu}}/P_\alpha$, respectively.
Let $G$ be $P_\alpha$-generic and $C_\alpha=\dot{C}_\alpha[G]$.
Then $C_\alpha \subseteq \alpha+1$, $\max(C_\alpha)=\alpha$.
If $\circ(\alpha)=0$, then $C_\alpha \setminus \{\alpha\}$ is a closed unbounded subset of $\alpha$.
Otherwise, $C_\alpha \setminus \{\alpha\}$ is a club subset of $\alpha$ and singularizes $\alpha$ to have cofinality $\cf(\omega^{\circ(\alpha)})$ (where the exponentiation is the ordinal one).
If $\lambda$ is regular in $V$ and $\lambda \not \in \lim(C_\alpha \setminus \{\alpha\})$, then $\circ(\lambda)=0$ and $\lambda$ is regular in $V[G]$.
Now, let $H$ be $(P_{\dot{\nu}})[G]$-generic, $\nu=\dot{\nu}[G]$, and $C_{\nu/\alpha}=\dot{C}_{\nu/\alpha}[G][H]$.
Then, $C_{\nu/\alpha} \subseteq (\alpha,\nu]$ so that if  $\nu=\alpha$, $C_{\nu/\alpha}=\emptyset$, otherwise $\max(C_{\nu/\alpha})=\nu$.
Assume $\nu \neq \alpha$, if $\circ(\nu)=0$, $C_{\nu/\alpha} \setminus \{\nu\}$ is a closed bounded subset of $\nu$, otherwise, $C_{\nu/\alpha} \setminus \{ \nu\}$ is a club subset of $\nu$, which singularizes $\nu$ to have cofinality $\cf(\omega^{\circ(\nu)})$ (where the exponentiation is the ordinal one).
If $\lambda$ is regular in $V[G]$ and $\lambda \not \in \lim(C_{\nu/\alpha})$, then $\circ(\lambda)=0$ and $\lambda$ is regular in $V[G][H]$.

\item \label{clubfactor}
Fix the notions as in (\ref{factor1}) and (\ref{factor2}), i.e. $Q$ is either $P_\beta$ or $P_{\dot{\gamma}}/P_\beta$ where $\Vdash_\beta \beta \leq \dot{\gamma}<\alpha$, $G$ is $Q$-generic, $H$ is $P_\alpha[G]$-generic, and $I$ is the canonical generic such that $V[I]=V[G][H]$.
Consider the model $V[I]$.
If $Q=P_\beta$, let $C_\beta$ be derived from $G$ and let $C_Q=C_\beta$.
If $Q=P_\beta * P_{\dot{\gamma}}/P_\beta$, let $C_\beta$ be derived from $G \restriction P_\beta$,  $\gamma=\dot{\gamma}[G \restriction P_\beta]$, $C_{\gamma/\beta}$ be derived from $G \restriction (P_{\dot{\gamma}}[G \restriction \beta])$, and $C_Q=C_\beta \sqcup C_{\gamma/\beta}$.
Then there is a set $C_{\alpha/Q}$ derived from $H$, the $P_\alpha[G]$-generic. $C_{\alpha/Q}$ is of the form $C_{\alpha/\beta}$ if $Q=P_\beta$, then $C_{\alpha/Q} \subseteq (\beta,\alpha]$, if $Q=P_\beta * P_{\dot{\gamma}/\beta}$ and $\gamma$ is as above, then $C_{\alpha /Q} \subseteq (\gamma,\alpha]$.
In both cases, $\max(C_{\alpha/Q})=\alpha$.
If $\circ(\alpha)=0$, then $C_{\alpha/Q} \setminus \{\alpha\}$ is bounded in $\alpha$, otherwise, $C_{\alpha/Q} \setminus \{\alpha\}$ adds a club subset of $\alpha$, and singularizes $\alpha$ to have cofinality $\cf(\omega^{\circ(\alpha)})$ (the exponentiation is the ordinal one).
Furthermore, $C_\alpha=C_Q \sqcup C_{\alpha/Q}$.

\end{enumerate}

\end{prop}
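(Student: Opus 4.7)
The plan is to prove all eight clauses simultaneously by induction on $\alpha \in X$, with the base cases $\alpha = \alpha_0, \alpha_1$ supplied by Section \ref{forcingbasic}. For the inductive step at $\alpha > \alpha_1$ (both the $X$-successor case and the various limit-of-$X$ cases, which differ only in the combinatorial shape of the tree of possible stems), I would first fix the structure of $P_\alpha$: a condition is either \emph{pure}, of the form $\langle \alpha, A, F\rangle$ with $A \in \vec{U}(\alpha)$ and $F$ a function sending each $\gamma \in A$ to a pure condition in $P_\gamma$ built inductively, or \emph{impure}, whose stem factors through some $Q = P_\beta$ or $P_\beta * P_{\dot\gamma}/P_\beta$ with $\beta < \alpha$, followed by an element of $P_\alpha[G]$ as in clause \eqref{factor1}. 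The orderings $\leq_\alpha$ and $\leq^*_\alpha$ are defined recursively by shrinking $A$ inside $\vec{U}(\alpha)$ and refining $F$ pointwise through the $\leq^*$ supplied at lower levels by induction; the quotient $P_{\dot\nu}/P_\alpha$ is read off directly from this factoring.

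I would first dispatch the structural clauses. The bound $|P_\alpha| \leq \alpha^+$ in \eqref{ccandclosed} follows from GCH, the inductive size bounds, and the fact that each condition is determined by a stem (a pair in a two-step iteration of bounded size) together with a top triple whose measure-one component lies in $V_\alpha$; the $\alpha^{++}$-c.c.\ is immediate. For the closure of $\leq^*_\alpha$ in \eqref{ccandclosed} and the stronger amalgamation in \eqref{nameclosed}, I would take a $\leq^*$-descending chain of length $\beta < \alpha$, integrate the corresponding measure-one sets into a single element of $\vec{U}(\alpha)$ via Proposition \ref{integrate}, and amalgamate the functions $F_\xi$ pointwise using the inductively available $\leq^*$-closure at each lower $\gamma$; the name version is handled by Propositions \ref{measureinground} and \ref{measureoneinforcing}, applied to push name-level data back into the ground model. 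Clauses \eqref{factor1}, \eqref{factor2}, and \eqref{twolevelquotient} on the behavior of quotients are essentially bookkeeping: the recursive form of conditions yields a canonical splitting of any $p \in P_\alpha$ above a stem in $Q$ into its $Q$-part and its $P_\alpha[G]$-part, and both directions of the correspondence between $P_\alpha$-generics and pairs $(G, H)$ follow from this. For \eqref{club} and \eqref{clubfactor}, I would extract $C_\alpha$ by reading off the top-points of the lower conditions appearing in the generic filter and closing up; the inductive form of \eqref{club} at each lower $\gamma$ gives a club $C_\gamma \subseteq \gamma+1$, and these concatenate cofinally inside $\alpha$ when $\circ(\alpha) > 0$, producing cofinality $\cf(\omega^{\circ(\alpha)})$ and the decomposition $C_\alpha = C_Q \sqcup C_{\alpha/Q}$.

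The main obstacle is clause \eqref{prikry}, the Prikry property, whose full proof is the content of Section \ref{theprikryproperty}. The strategy will be the Radin-type one adapted to the absence of a top measurable and to the quotient setting: given a statement $\varphi$ of the forcing language and a pure condition $p = \langle \alpha, A, F\rangle$, I would produce $p^* \leq^* p$ such that the truth value of $\varphi$ in any $\leq_\alpha$-extension of $p^*$ depends only on the extension's stem. The inductive step at $\alpha$ decides $\varphi$ separately along each $U(\alpha,\beta)$ for $\beta < \circ(\alpha)$: for each candidate extension using a coordinate $\gamma$ of order $\beta$, I would invoke the lower-level Prikry property (available by induction on $\gamma$) to shrink $A$ inside $U(\alpha,\beta)$ so that the decision is uniform in $\gamma$, then amalgamate across $\beta$ using $\alpha$-completeness and integrate via Proposition \ref{integrate}. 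The quotient version is obtained by running the same argument in $V[G]$, using Proposition \ref{measureoneinforcing} to lift the measures and density-replace names with ground-model sets. Once \eqref{prikry} is in hand, cardinal and GCH preservation in \eqref{prikry}, \eqref{factor1} follow from the chain condition of \eqref{ccandclosed}, the $\leq^*$-closure of \eqref{nameclosed}, and the Prikry property in the standard way.
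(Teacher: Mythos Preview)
Your outline is essentially the paper's own approach: the proof of Proposition \ref{theproperties} is carried out by induction on $\alpha$, with the structural clauses (\ref{ccandclosed}), (\ref{nameclosed}), (\ref{factor1}), (\ref{factor2}), (\ref{twolevelquotient}) handled in Section \ref{forcingPalpha}, the Prikry property in Section \ref{theprikryproperty} via a maximality lemma, a stem-integration lemma, and a minimal-counterexample argument, and clauses (\ref{prikry}), (\ref{club}), (\ref{clubfactor}) on preservation and the club in Section \ref{cardinalbehave}. One point to correct in your description: the function $F$ does \emph{not} send $\gamma \in A$ to a pure condition in $P_\gamma$; rather $F(\gamma)=\langle \dot\nu,\dot q\rangle$ with $\Vdash_\gamma \dot\nu \in [\gamma,\alpha)$ and $\dot q \in P_{\dot\nu}/P_\gamma$, i.e.\ a name for a condition in a quotient forcing that fills the gap above $\gamma$ --- this is precisely what makes the factorizations (\ref{factor1})--(\ref{twolevelquotient}) and the concatenation in (\ref{clubfactor}) go through, so be sure your actual write-up uses this form.
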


Fix $\alpha$.
We assume that for $\beta<\alpha$ and $P_\beta$-name $\dot{\nu}$ with $\Vdash_\beta \beta \leq \dot{\nu}<\alpha$, $P_{\dot{\nu}}/P_\beta$ are defined and satisfy Proposition \ref{theproperties}.
Our main task is to define $P_\alpha$ and $P_\alpha/P_\beta$ (or more precisely, $P_{\dot{\nu}}/P_\beta$ when $\dot{\nu}$ is forced in $P_\beta$ to be $\alpha$) for $\beta \leq \alpha$, and show that all the relevant forcings at level $\alpha$ satisfy Propsition \ref{theproperties}.

\begin{defn}{\textbf{(The forcing: general cases)}}
\label{forcingwithtop}
A condition in $P_\alpha$ is either the weakest condition $1_\alpha$, or of the form $p=s^\frown \langle \alpha,A,F \rangle$, where $s=\langle s_i \rangle_{i<n}$ for some $n$, $s_i$ is either $\langle \alpha_i,A_i,F_i \rangle$ or $\langle \alpha_i,A_i,F_i,\dot{\nu}_i,\dot{q}_i \rangle$ such that

\begin{enumerate}

\item \label{technicalsplit} for $i<n$, if $\circ(\alpha_i) \leq \max_{j<i}(\circ(\alpha_j))$, then $s_i$ is a triple, otherwise $s_i$ is a $5$-tuple.

\item $\alpha_0< \cdots < \alpha_{n-1}<\alpha_n=:\alpha$ are inaccessible.

\item for $i \leq n$,

\begin{itemize}

\item $A_i \in \vec{U}(\alpha_i)$ (note that $A_i$ is empty if and only if $\circ(\alpha_i)=0$).

\item $\dom(F_i)= \{\xi \in A_i \mid \circ(\xi)> \max_{j<i}(\circ(\alpha_j))\}$.

\item for $\gamma \in \dom(F_i)$, $F_i(\gamma)=\langle \dot{\nu},\dot{q} \rangle$, $\Vdash_\gamma \dot{\nu} \in [\gamma,\alpha_i)$, $\dot{q} \in P_{\dot{\nu}}/P_\gamma$.

\item if $i \neq n$, $\dot{\nu}_i$ and $\dot{q}_i$ exist, then $\Vdash_{\alpha_i} \dot{\nu}_i \in [\alpha_i,\alpha_{i+1})$ and $ \dot{q}_i \in P_{\dot{\nu}_i}/P_{\alpha_i}$.

\item if $i \neq n$, recall that $\bar{\nu}_i=\sup\{\nu \mid \exists r \in P_\beta (r \Vdash_\beta \dot{\nu}_i=\check{\nu})\}+1$. 
Note that $|P_{\alpha_i}|\leq \alpha_i^+$, so $\bar{\nu}_i<\alpha_{i+1}$.
We assume that $\min(A_{i+1})>\bar{\nu}_i$.

\end{itemize}

\end{enumerate}

We assume in general that if $\gamma<\gamma^\prime$ are both in $A$ or in $A_i$, and $\dot{\nu}^\gamma=F(\gamma)_0$, then $\bar{\nu}^\gamma<\gamma^\prime$.
We denote $s$ by $\stem(p)$, $\alpha_{n_1}+1$ by $\tp(s)$, and $n$ by $l(p)$.
Define $\circ(s)=\max\{\circ(\alpha_i) \mid i<l(p)\}$ if exists, otherwise $\circ(s)=-1$.
We sometimes refer $\alpha_i$ as $(s_i)_0$.
We say that $p$ is {\em pure} if $p=\langle \alpha,A,F \rangle$, in which case, $\dom(F)=A$, otherwise, we say that $p$ is {\em impure}.
For each condition $p$, $\stem(p) \in P_{\alpha_{n-1}} \cup P_{\alpha_{n-1}} * P_{\dot{\nu}_{n-1}}/P_{\alpha_{n-1}}$ for some $P_{\alpha_{n-1}}$-name of an ordinal $\dot{\nu}_{n-1}$.
Also, note that if $\circ(\alpha_i)\leq \max_{j<i} \circ(\alpha_j)+1$, then $F_i=\emptyset$.
Thus, we can see that if $i<n$ and $s_i$ is a triple, then $F_i=\emptyset$.
The reason that we do not remove $F_i$ from a triple is because we always consider $\langle s_k \rangle_{k<i} {}^\frown \langle \alpha_i,A_i,F_i \rangle$ as a condition in $P_{\alpha_i}$.
One may wonder if it may be simpler by saying that $\dom(F_i)$ is $A_i$, however, by the transitivity of the relations, the restrictions of the domains or ordinals of certain Mitchell orders are necessary.

\end{defn}

\begin{defn}{\textbf{(The direct extension relation)}}
\label{directext}

First, let $p \leq^*_\alpha 1_{P_\alpha}$ for all $p \in P_\alpha$.
Let $p=s^\frown \langle \alpha,A, F \rangle$ and $p^\prime=s^\prime {}^\frown \langle \alpha,A^\prime,F^\prime \rangle$.
Write $s=\langle s_i \rangle_{i<l(p)}$, and $s^\prime=\langle s_i^\prime \rangle_{i<l(p^\prime)}$.
Let $A_{l(p)}=A, F_{l(p)}=F, A_{l(p^\prime)}=A^\prime$, and $F_{l(p^\prime)}=F^\prime$.
We say that
$p$ is a {\em direct extension of $p^\prime$}, denoted by $p \leq^*_\alpha p^\prime$, if

\begin{enumerate}

\item $n:=l(p)=l(p^\prime)$.

\item for $i \leq n$,

\begin{itemize}

\item $\alpha_i=\alpha_i^\prime$.

\item $A_i \subseteq A_i^\prime$.

\item for $\gamma \in A_i$, 

\begin{center}

$s \restriction i^\frown \langle \gamma, A_i \restriction \gamma, F_i  \restriction (\dom(F_i) \cap A_i \restriction \gamma) \rangle \leq^*_\gamma s^\prime \restriction i^\frown \langle \gamma, A_i^\prime \restriction \gamma, F_i^\prime \restriction (\dom(F_i^\prime) \cap A_i \restriction \gamma)\rangle$,

\end{center}

and furthermore, if $\gamma \in \dom(F_i)$,

\begin{center}

$s \restriction i^\frown \langle \gamma,  A_i \restriction \gamma, F_i \restriction (\dom(F_i) \cap A_i \restriction \gamma) \rangle \Vdash_\gamma F_i(\gamma)_0=F_i^\prime(\gamma)_0$ and $F_i(\gamma)_1 \leq^*_{F_i(\gamma)_0/\gamma} F_i^\prime(\gamma)_1$.

\end{center}
 
\item $s \restriction i^\frown \langle \alpha_i,A_i,F_i \rangle \leq^*_{\alpha_i} s^\prime \restriction i^\frown \langle \alpha_i,A_i,F_i \rangle$, and if $s_i$ is a $5$-tuple (note that this is true if and only if $s_i^\prime$ is a $5$-tuple), then 

\begin{center}

$s \restriction i^\frown \langle \alpha_i, A_i, F_i \rangle \Vdash_{\alpha_i} \dot{\nu}_i=\dot{\nu}_i^\prime$ and $\dot{q}_i \leq^*_{\dot{\nu}_i/\alpha_i} \dot{q}_i^\prime$.

\end{center} 
 
\end{itemize}

\end{enumerate}

\end{defn}

We now exemplify how non-direct extension works.
We first start by describing an instance of a minimal one-step extension, as it naturally occurs as a part of an extension relation.

To exemplify, assume $\circ(\alpha)=\omega$ and $p=\langle \alpha,A,F \rangle$ is pure.
Let $\gamma \in A$.
Assume $\circ(\gamma)=5$, $\dot{\nu}=F(\gamma)_0$, and $\bar{\nu}=\sup\{\nu \mid \exists p \in P_\gamma (p \Vdash_\gamma \dot{\nu}=\check{\nu})\}+1$ is the corresponding value.
Then the minimal one-step extension of $p$ by $\gamma$, which will be denoted by $p+\langle \gamma\rangle $, is the condition

\begin{center}

 $p^\prime=\langle \gamma, A\restriction \gamma, F \restriction (A\restriction\gamma) ,\dot{\nu},\dot{q}\rangle^\frown \langle \alpha, A^\prime, F^\prime \rangle$,
 
 \end{center} 
 
 where $A \restriction \gamma=\{\xi<\gamma \mid \xi \in A$ and $\circ(\xi)<5\}$, $\dot{q}=F(\gamma)_1$, $A^\prime=A \setminus (\gamma+1)$ (note that $\min(A \setminus (\gamma+1))>\bar{\nu}$), and $F^\prime=F \restriction \{\xi \in A^\prime \mid \circ(\xi)>5\}$.
A triple $\langle \gamma, A \restriction\gamma, F \restriction (A \restriction\gamma) \rangle$ is now considered as a pure condition in $P_\gamma$, the $5$-tuple $\langle \gamma,A \restriction \gamma, F\restriction (A\restriction \gamma),\dot{\nu},\dot{q} \rangle$ is considered as a condition in $P_\gamma * P_{\dot{\nu}}/P_\gamma$.

A further one-step extension of $p^\prime$ can be done by either adding $\gamma^\prime \in A \restriction \gamma$ to extend the stem to the left as before, or extending by using $\gamma^\prime \in A^\prime$.
Consider the latter case.
Suppose that $\circ(\gamma^\prime) \leq 5$, then $\gamma^\prime \not \in \dom(F^\prime)$. 
The one-step extension of $p^\prime$ by $\gamma^\prime$ will be

\begin{center}

$\langle \gamma, A \restriction \gamma, F \restriction (A \restriction \gamma), \dot{\nu}, \dot{q} \rangle^\frown \langle \gamma^\prime, A^\prime \restriction \gamma^\prime, F^\prime \restriction (A^\prime \restriction \gamma^\prime) \rangle^\frown \langle \alpha, A^\prime \setminus (\gamma^\prime+1), F^\prime \setminus (\gamma^\prime+1) \rangle$.

\end{center}

We see that $\langle \gamma, A\restriction \gamma, F \restriction (A \restriction \gamma) \rangle^\frown \langle \gamma^\prime, A^\prime \restriction \gamma^\prime, F^\prime \restriction (A^\prime \restriction \gamma^\prime) \rangle \in P_{\gamma^\prime}$.
Note that $F^\prime \restriction (A^\prime \restriction \gamma^\prime)$ is empty since ordinals in $\dom(F^\prime)$ have Mitchell orders at least $6$, but the ordinals in $A^\prime \restriction \gamma^\prime$ have Mitchell order less than $\circ(\gamma^\prime)$, which is at most $5$.
If, however, $\circ(\gamma^\prime)>5$, the one-step extension of $p^\prime$ by $\gamma^\prime$ will be

\begin{center}

$\langle \gamma, A \restriction \gamma, F \restriction(A \restriction\gamma), \dot{\nu}, \dot{q} \rangle^\frown \langle \gamma^\prime, A^\prime \restriction \gamma^\prime, F^\prime \restriction (\dom(F^\prime) \cap A^\prime \restriction \gamma^\prime) ,F^\prime(\gamma^\prime)_0, F(\gamma^\prime)_1 \rangle^\frown \langle \alpha, A^{\prime\prime} ,F^{\prime\prime} \rangle$,

\end{center}

for some certain $A^{\prime\prime}$ and $F^{\prime\prime}$.
Suppose that $\circ(\gamma^\prime)=8$ and $\dot{\nu}^\prime=F^\prime(\gamma^\prime)_0$.
Then, ordinals in $\dom(F^\prime \restriction (A^\prime \restriction \gamma^\prime))$ have Mitchell orders $6$ or $7$.
$A^{\prime \prime}=A^\prime \setminus (\gamma^\prime+1)$ (recall $\min(A^\prime \setminus (\gamma^\prime+1))>\bar{\nu}^\prime$).
$F^{\prime \prime}=F^\prime \restriction \{\xi \in A^{\prime \prime} \mid \circ(\xi)>8\}$.
We now formally define a minimal one-step extension.

\begin{defn}{\textbf{(A minimal one-step extension)}}
\label{minimalonestep}
Let $p \in P_\alpha$, $n=l(p)$, $p=s^\frown \langle \alpha, A,F \rangle$, for $i<n$, write $s_i=\langle \alpha_i,A_i,F_i \rangle$ or $\langle \alpha_i,A_i,F_i,\dot{\nu}_i,\dot{q}_i \rangle$.
Let $A_n=A$, $F_n=F$, $i \leq n$, and $\gamma \in A_i$.
The {\em one-step extension of $p$ by $\gamma$}, denoted by $p+\langle \gamma \rangle$, is the condition $p^\prime=s^\prime {}^\frown \langle \alpha,A^\prime,F^\prime \rangle$ such that

\begin{enumerate}

\item $l(p^\prime)=l(p)+1$.

\item $s^\prime \restriction i=s \restriction i$.

\item if $\gamma \not \in \dom(F_i)$ (i.e. $\circ(\gamma) \leq \max_{j<i} \circ(\alpha_j))$), $s^\prime_i=\langle \gamma, A_i \restriction \gamma,F_i \restriction (\dom(F_i) \cap A_i \restriction \gamma)\rangle$, otherwise, $s^\prime_i=\langle \gamma,A_i \restriction \gamma, F_i \restriction (\dom(F_i) \cap A_i \restriction \gamma), F_i(\gamma)_0,F_i(\gamma)_1 \rangle$.

\item if $i<n$, then,

\begin{itemize}

\item if $s_{i+1}$ is a triple, then $s_{i+1}^\prime=\langle \alpha_i, A_i \setminus (\gamma+1), F_i \restriction \{ \xi \in A_i \setminus (\gamma+1) \mid \circ(\xi)> \circ(\gamma)\} \rangle$ (note that if $\gamma \not \in (\dom(F_i))$, then $F_i \restriction \{ \xi \in A_i \setminus (\gamma+1) \mid \circ(\xi)> \circ(\gamma)\}$ is simply $F_i \restriction (A_i \setminus (\gamma+1))$).

\item if $s_{i+1}$ is a $5$-tuple, then  $s_{i+1}^\prime=\langle \alpha_i, A_i \setminus (\gamma+1), F_i \restriction \{ \xi \in A_i \setminus (\gamma+1)\mid \circ(\xi)> \circ(\gamma)\},\dot{\nu}_i,\dot{q}_i \rangle$ (note that if $\gamma \not \in (\dom(F_i))$, then $F_i \restriction \{ \xi \in A_i \setminus (\gamma+1) \mid \circ(\xi)> \circ(\gamma)\}$ is simply $F_i \restriction (A_i \setminus (\gamma+1))$).

\item for $j>i+1$, $s^\prime_j=s_j$, $A^\prime=A$, $F^\prime=F$.

\end{itemize}

\item if $i=n$, then $A^\prime=A \setminus (\gamma+1), F^\prime=F \restriction \{ \xi \in A_i \setminus (\gamma+1) \mid \circ(\xi)> \circ(\gamma)\}$ (note that if $\gamma \not \in \dom(F)$, then $F \restriction \{ \xi \in A_i \setminus (\gamma+1) \mid \circ(\xi)> \circ(\gamma)\}$ is simply $F\restriction (A \setminus (\gamma+1))$).

\end{enumerate}

\end{defn}

We define an $n$-step extension recursively by $p+\langle \vec{\gamma} \rangle=(p+\langle \gamma_0,\cdots, \gamma_{n-2} \rangle)+\langle \gamma_{n-1}\rangle$, as long as the extensions are legitimate.

\begin{defn}{\textbf{(The extension relation)}}
\label{extrel}
With the setting in Definition \ref{directext}, we say that $p$ is an {\em extension} of $p^\prime$, denoted by $p \leq_\alpha p^\prime$, if

\begin{enumerate}

\item $l(p) \geq l(p^\prime)$.

\item \label{extpivot} there are $i_0<i_1< \cdots <i_{l(p^\prime)-1}<i_{l(p^\prime)}=l(p)$ such that

\begin{itemize}

\item $\alpha_{i_j}=\alpha_j^\prime$.

\item $A_{i_j} \subseteq A_j^\prime$.

\item for $\gamma \in A_{i_j}$,

\begin{center}

$s \restriction i_j^\frown \langle \gamma, A_{i_j} \restriction \gamma, F_{i_j} \restriction (\dom(F_{i_j} \cap A_{i_j} \restriction \gamma) \rangle \leq_\gamma s^\prime \restriction j^\frown \langle \gamma_,A_j^\prime \restriction \gamma, F_j^\prime \restriction (\dom(F_j^\prime) \cap A_j^\prime \restriction \gamma) \rangle$,

\end{center}

and if $\gamma \in \dom(F_{i_j})$,

\begin{center}

$s \restriction i_j^\frown \langle \gamma^\frown A_{i_j} \restriction \gamma, F_{i_j} \restriction  (\dom(F_{i_j} \cap A_{i_j} \restriction \gamma) \rangle \Vdash_\gamma F_{i_j}(\gamma)_0=F_j^\prime(\gamma)_0$ and $F_{i_j}(\gamma)_1 \leq^*_{F_{i_j}(\gamma)_0/\gamma} F_j^\prime(\gamma)_1$ (the $\leq^*$ relation is intended).

\end{center}

\item $s \restriction i_j^\frown \langle \alpha_{i_j},A_{i_j},F_{i_j} \rangle \leq_{\alpha_{i_j}} s^\prime \restriction j^\frown \langle \alpha_j,A_j,F_j \rangle$, and $s_{i_j}$ is a $5$-tuple, iff $s_j^\prime$, in which case

\begin{center}

$s \restriction i_j^\frown \langle \alpha_{i_j}, A_{i_j}, F_{i_j} \rangle \Vdash_{\alpha_{i_j}} \dot{\nu}_{i_j}=\dot{\nu}_j^\prime$ and $\dot{q}_{i_j} \leq_{\dot{\nu}_{i_j}/\alpha_{i_j}} \dot{q}_j^\prime$.

\end{center}

\end{itemize}

\item let $i_0,i_1, \cdots, i_{l(p^\prime)}$ be as in (\ref{extpivot}),
 $k \not \in \{i_0, \cdots, i_{l(p^\prime)}\}$,
and $j$ be the least such that $k<i_j$.
Then,

\begin{itemize}

\item $\alpha_k \in A_j^\prime$.

\item $A_k \subseteq A_j^\prime \restriction \alpha_k$.

\item for $\gamma \in A_k$,

\begin{center}

$s \restriction k^\frown \langle \gamma, A_k \restriction \gamma, F_k \restriction (\dom(F_k) \cap A_k \restriction \gamma) \rangle \leq_\gamma s^\prime \restriction j-1^\frown \langle \gamma,A_j^\prime \restriction \gamma, F_j^\prime \restriction (\dom(F_j^\prime) \cap A_j^\prime \restriction \gamma) \rangle$, 

\end{center}

and if $\gamma \in \dom(F_k)$,

\begin{center}

$s \restriction k^\frown \langle \gamma, A_k \restriction \gamma, F_k \restriction (\dom(F_k) \cap A_k \restriction \gamma) \rangle \Vdash_\gamma F_k(\gamma)_0=F_j^\prime(\gamma)_0$ and $F_k(\gamma)_1 \leq^*_{F_k(\gamma)_0/\gamma} F_j^\prime(\gamma)_1$ (the $\leq^*$ relation is intended).

\end{center}

\item  $\alpha_k \in \dom(F_j^\prime)$ if and only if $s_k$ is a $5$-tuple, in which case, $\Vdash_{\alpha_k} \dot{\nu}_k=F_j^\prime(\alpha_k)_0$, and 

\begin{center}

$s \restriction k^\frown \langle \alpha_k, A_k,F_k \rangle \Vdash_{\alpha_k} \dot{q}_k \leq_{\dot{\nu}_k/\alpha_k} F_j^\prime(\alpha_k)_1$.

\end{center}

\end{itemize}

\end{enumerate}

\end{defn}

It is not true that $p \leq_\alpha p^\prime$ iff $p \leq^*_\alpha p^\prime+ \vec{\gamma}$ for some $\vec{\gamma}$.
However, $p \leq_\alpha p^\prime$ iff there is $p^* \leq^*_\alpha p^\prime+\vec{\gamma}$ for some $\vec{\gamma}$ such that $p$ is obtained from $p^*$ by extending the names appearing at the fifth-coordinate of each $5$-tuple sequence in $\stem(p^*)$.
We now consider some basic properties of $P_\alpha$.
\begin{prop}
\label{cc}
$|P_\alpha| \leq \alpha^+$, and hence $\alpha^{++}$-c.c.

\end{prop}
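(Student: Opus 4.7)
The plan is to bound $|P_\alpha|$ by decomposing each condition $p = s^\frown \langle \alpha, A, F \rangle$ into its stem and its top-level triple, counting each part separately using the inductive bounds of Proposition~\ref{theproperties} together with GCH. The $\alpha^{++}$-c.c.\ then follows immediately, since any antichain has cardinality at most $|P_\alpha| \leq \alpha^+ < \alpha^{++}$.

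First, I would count the top triples $\langle \alpha, A, F\rangle$. Since $A \subseteq \alpha$, GCH gives at most $2^\alpha = \alpha^+$ choices for $A$. The function $F$ has domain contained in $A$ (so of size $\leq \alpha$), and for each $\gamma$ in the domain $F(\gamma) = \langle \dot{\nu}, \dot{q}\rangle$ is a pair where $\dot{\nu}$ is a $P_\gamma$-name for an ordinal in $[\gamma, \alpha)$ and $\dot{q}$ is a $P_\gamma$-name for an element of $P_{\dot{\nu}}/P_\gamma$. By the inductive hypothesis, $|P_\gamma| \leq \gamma^+$ and $|P_\gamma * P_{\dot{\nu}}/P_\gamma| \leq \bar{\nu}^+ < \alpha$. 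Under GCH, $P_\gamma$ has at most $2^{\gamma^+} = \gamma^{++} \leq \alpha$ antichains, so a standard nice-names count yields at most $\alpha^{\gamma^{++}} \leq \alpha^\alpha = \alpha^+$ nice $P_\gamma$-names for each coordinate of $F(\gamma)$. Replacing each name by a nice representative, there are at most $\alpha^+$ admissible values for $F(\gamma)$, hence at most $(\alpha^+)^\alpha = \alpha^+$ admissible $F$'s, and the top triples contribute $\alpha^+ \cdot \alpha^+ = \alpha^+$.

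Second, I would count stems. A stem $s = \langle s_i \rangle_{i < n}$ has finite length $n$, and each coordinate $s_i$ (triple or five-tuple) is built from the same kind of data as a top triple but at level $\alpha_i < \alpha$, plus possibly the fifth-coordinate names $(\dot{\nu}_i, \dot{q}_i)$, whose values lie in a set of cardinality strictly below $\alpha$ by the inductive hypothesis. The same counting at level $\alpha_i$ gives strictly fewer than $\alpha$ possibilities for $s_i$; summing over the finitely many stem-lengths yields at most $\alpha$ stems in total. Combining the two counts, $|P_\alpha| \leq \alpha \cdot \alpha^+ = \alpha^+$.

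The main place to be careful — rather than a genuine obstacle — is the nice-names bookkeeping: one must verify that every $P_\gamma$-name appearing in $F$ or in a five-tuple coordinate can be replaced by a canonical nice representative without altering the resulting condition in $P_\alpha$, and that the inductive cardinality and chain-condition bounds combine cleanly through GCH so that the count for each value of $F(\gamma)$ comes out to $\alpha^+$ rather than something larger. Once this is done, the arithmetic is routine.
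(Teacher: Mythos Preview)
Your argument is correct and follows essentially the same counting strategy as the paper, just spelled out in more detail. The paper compresses the entire count into the observation that $s \in V_\alpha$, $A \subseteq \alpha$, and $F$ is a partial function from $\alpha$ into $V_\alpha$ (which encapsulates your nice-names bound), so that $|P_\alpha| \leq 2^\alpha = \alpha^+$ by GCH and inaccessibility of $\alpha$.
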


\begin{proof}

Each condition $p=s^\frown \langle \alpha,A, F \rangle$, we have $s \in V_\alpha$, $A \subseteq \alpha$, and $F$ is a partial function from $\alpha$ to $V_\alpha$.
Since GCH holds and $\alpha$ is inaccessible, $|P_\alpha| \leq 2^\alpha=\alpha^+$.

\end{proof}

We now prove the strong form of closure under the direct extension relation, i.e. Proposition \ref{theprikryproperty} (\ref{nameclosed}).

\begin{prop}
\label{strongclosure}
Let $p \in P_\alpha$ and $(\stem(p)_0)_0>\gamma$.
Let $\xi< \min(X \setminus (\gamma+1))$.
Assume for $\epsilon<\xi$, there is $p_\epsilon$ such that $p_\epsilon \leq^*_\alpha p$.
Then there is $p^* \leq^*_\alpha p$ such that for $\epsilon$, $p^* \leq^*_\alpha p_\epsilon$.

\end{prop}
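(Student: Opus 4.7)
The strategy is to produce $p^*$ componentwise: intersect the measure-one sets, and at every place where a $\leq^*$-descending sequence of names appears, combine them via the inductive closure at a strictly lower level.

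Write $p=s^\frown\langle\alpha,A,F\rangle$ with $s=\langle s_i\rangle_{i<n}$. By inspection of Definition \ref{directext}, every $p_\epsilon\leq^*_\alpha p$ has the same length $n$, the same ordinal sequence $\alpha_0<\cdots<\alpha_{n-1}<\alpha_n=\alpha$, and the same $3$-tuple/$5$-tuple pattern as $p$; only the measure-one sets $A_i^\epsilon\subseteq A_i$, the function values $F_i^\epsilon(\gamma')_1$, and, for $5$-tuple coordinates, the quotient names $\dot q_i^\epsilon$ may differ. The hypothesis $(\stem(p)_0)_0=\alpha_0>\gamma$ together with $\alpha_0\in X$ gives $\alpha_0\geq\min(X\setminus(\gamma+1))>\xi$, so $\xi<\alpha_i$ at every coordinate.

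I would first set $A_i^*:=\bigl(\bigcap_{\epsilon<\xi}A_i^\epsilon\bigr)\setminus(\gamma+1)$ for each $i\leq n$; by $\alpha_i$-completeness of every measure in $\vec U(\alpha_i)$ this lies in $\vec U(\alpha_i)$, and the cut-off guarantees $\gamma'>\gamma$ for every $\gamma'\in A_i^*$. For each $5$-tuple index $i<n$ I would invoke the second half of item (\ref{nameclosed}) of Proposition \ref{theproperties} at level $\alpha_i$: since $\xi<\alpha_i<\min(X\setminus(\alpha_i+1))$, the closure of $\leq^*$ in the quotient $P_{\dot\nu_i}/P_{\alpha_i}$ under $\xi$-sequences applies to the names $\dot q_i^\epsilon$ (witnessed by the $P_{\alpha_i}$-initial segments of the $p_\epsilon$'s, which force $\dot q_i^\epsilon\leq^*\dot q_i$), producing a common $\leq^*$-lower bound $\dot q_i^*$. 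Analogously, for each $i\leq n$ and each $\gamma'\in A_i^*\cap\dom(F_i)$ I have $\gamma'>\gamma$, hence $\xi<\min(X\setminus(\gamma+1))\leq\min(X\setminus(\gamma'+1))$; applying the same closure now at level $\gamma'$ in the quotient $P_{F_i(\gamma')_0}/P_{\gamma'}$ combines the $F_i^\epsilon(\gamma')_1$ into a name $F_i^*(\gamma')_1$, and I set $F_i^*(\gamma')_0:=F_i(\gamma')_0$, which agrees with each $F_i^\epsilon(\gamma')_0$ as forced by the common restricted stem.

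Assembling $p^*=s^*{}^\frown\langle\alpha,A^*,F^*\rangle$ by replacing $A_i,F_i,\dot q_i$ throughout by $A_i^*,F_i^*,\dot q_i^*$ (and keeping $\alpha_i,\dot\nu_i$ unchanged), the verification that $p^*\leq^*_\alpha p_\epsilon$ for every $\epsilon<\xi$ is a direct coordinate-by-coordinate check against Definition \ref{directext}: $A_i^*\subseteq A_i^\epsilon$, and the restricted stems of $p^*$ force the required equalities and $\leq^*$-comparisons by construction of $F_i^*$ and $\dot q_i^*$. I expect the main obstacle to be bookkeeping rather than conceptual: one applies the inductive closure at many different levels $\gamma'<\alpha$ (one at each element of each $\dom(F_i)$, plus one at each $\alpha_i$ carrying a $5$-tuple), and one must keep track of which restricted stem witnesses which $\leq^*$-comparison; however, each individual call to the inductive hypothesis is uniformly justified because shrinking $A_i^*$ above $\gamma$ makes $\xi$ small enough at every relevant level.
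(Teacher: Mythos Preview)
Your argument is correct, and the top-level part (intersecting the $A^\epsilon$'s and combining the $F^\epsilon(\beta)_1$'s via the quotient closure at level $\beta$) matches the paper exactly. The difference lies in how the stem is handled. You unwind the stem coordinate by coordinate: for each $i<n$ you separately intersect the $A_i^\epsilon$, invoke closure at level $\gamma'$ for every $\gamma'\in\dom(F_i)$, and invoke closure at level $\alpha_i$ for each $5$-tuple name $\dot q_i^\epsilon$. The paper instead observes that the whole stem $\stem(p_\epsilon)$ already lives in a single forcing $Q$ (either $P_{\alpha_{n-1}}$ or $P_{\alpha_{n-1}}*P_{\dot\nu_{n-1}}/P_{\alpha_{n-1}}$) at a level strictly below $\alpha$, with first coordinate $\alpha_0>\gamma$; one application of the inductive hypothesis (item~(\ref{nameclosed}) of Proposition~\ref{theproperties}) to $Q$ produces a single $s^*\leq^*_Q\stem(p_\epsilon)$ for all $\epsilon$ at once.

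What the paper's route buys is economy: all of your separate calls at the $A_i$, $F_i$, $\dot q_i$ level, together with the cut-off $\setminus(\gamma+1)$ you need on $A_0^*$ to make the closure fire at small $\gamma'$, are absorbed into that one inductive step. What your route buys is transparency: it makes explicit exactly which closure at which level is doing the work, and it does not rely on having already formulated item~(\ref{nameclosed}) for the two-step iteration $Q$. Either way the verification of $p^*\leq^*_\alpha p_\epsilon$ against Definition~\ref{directext} is the same recursive check.
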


\begin{proof}

Write $p_\epsilon=\stem(p_\epsilon)^\frown \langle \alpha, A^\epsilon,F^\epsilon \rangle$.
Let $Q$ be the forcing such that $\stem(p) \in Q$.
Then for all $\epsilon$, $\stem(p_\epsilon) \in Q$.
By proposition \ref{theproperties} (\ref{nameclosed}), and the fact that $(\stem(p)_0)_0>\gamma$, let $s^*$ be such that for all $\epsilon$, $s^* \leq^* \stem(p_\epsilon)$.
Take $A^*=\cap_\epsilon A^\epsilon$ and for $\beta \in A^*$, we will take $F^*(\beta)_1$ as a name with is forced to be $\leq^*$-stronger than $F^\epsilon(\beta)_1$ for all $\epsilon$.
More explicitly, we do this by induction on ordinals in $A^*$.
First, let $\beta=\min(A^*)$.
Consider $t_\epsilon= \stem(p_\epsilon)^\frown \langle \beta, A^\epsilon \restriction \beta, F^\epsilon \restriction \{\xi \in \dom(F^\epsilon) \cap  A^\epsilon \restriction \beta\mid \circ(\xi)>\circ(\beta) \}$.
For each $G$ which is $P_\beta$-generic, let $F^*(\beta)_1[G]$ be such that for any $\epsilon$ with $t_\epsilon \in G$, $F^*(\beta)_1[G] \leq^* F^\epsilon(\beta)_1[G]$.
Inductive steps can be done similarly.
Now, let $F^*(\beta)=\langle F^p(\beta)_0,F^*(\beta)_1 \rangle$.
Let $p^*=s^* {}^\frown \langle \alpha,A^*,F^* \rangle$, then $p^*$ is as required.

\end{proof}

In particular, we have that

\begin{coll}
\label{closure}
$(\{p \in P_\alpha \mid (\stem(p)_0)_0>\gamma\} ,\leq^*_\alpha)$ is $\min(X \setminus (\gamma+1))$-closed.
In particular, $(\{ p \in P_\alpha \mid p$ is pure$\},\leq^*_\alpha)$ is $\alpha$-closed.

\end{coll}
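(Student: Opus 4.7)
The plan is to derive Corollary \ref{closure} directly from Proposition \ref{strongclosure}. Given a $\leq^*_\alpha$-descending sequence $\langle p_\epsilon \mid \epsilon < \xi \rangle$ of conditions in $\{p \in P_\alpha \mid (\stem(p)_0)_0 > \gamma\}$ with $\xi < \min(X \setminus (\gamma+1))$, I would set $p := p_0$, invoke transitivity of $\leq^*_\alpha$ to get $p_\epsilon \leq^*_\alpha p$ for all $\epsilon$, and apply Proposition \ref{strongclosure} to produce $p^* \leq^*_\alpha p$ with $p^* \leq^*_\alpha p_\epsilon$ for every $\epsilon < \xi$. A quick inspection of Definition \ref{directext} shows that $\leq^*_\alpha$ preserves $l$ and the ordinals $\alpha_i$ appearing in the stem; in particular, $\stem(p^*)$ starts with the same ordinal as $\stem(p_0)$, so $(\stem(p^*)_0)_0 > \gamma$ and $p^*$ remains in the set.

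For the ``in particular'' clause about pure conditions, a pure $p$ has $l(p) = 0$ and hence empty stem, in which case the $s^*$ step in the proof of Proposition \ref{strongclosure} is trivial and the $A^*$, $F^*$ construction goes through verbatim for any sequence of length $\xi < \alpha$, since $\alpha$-completeness of the measures in $\vec{U}(\alpha)$ guarantees $A^* = \bigcap_\epsilon A^\epsilon \in \vec{U}(\alpha)$. Since $l$ is preserved by $\leq^*_\alpha$, the resulting common lower bound is itself pure. Equivalently, one can reduce the pure case to the first part by picking $\gamma \in X$ with $\xi \leq \gamma < \alpha$, which is possible because Assumption \ref{initialassumpt1}(3) makes $X$ cofinal in $\alpha$; then $\xi \leq \gamma < \min(X \setminus (\gamma+1))$, and the previous paragraph applies.

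I do not anticipate a real obstacle: both parts are formal consequences of Proposition \ref{strongclosure}, using only the stem-preserving nature of $\leq^*_\alpha$ (Definition \ref{directext}) and the cofinality of $X$ in $\alpha$.
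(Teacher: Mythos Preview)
Your proposal is correct and matches the paper's intent: the corollary is stated there without proof, as an immediate consequence of Proposition \ref{strongclosure}, and your derivation via $p := p_0$ together with the observation that $\leq^*_\alpha$ preserves $l$ and the ordinals $\alpha_i$ (so $p^*$ stays in the set) is exactly the intended argument.

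One small correction to your alternative route for the pure case: Assumption \ref{initialassumpt1}(3) asserts stationarity of $\{\alpha \in X \mid \circ(\alpha) \geq \nu\}$ in $\kappa$, not in the particular $\alpha$ under consideration, so it does not by itself guarantee that $X$ is cofinal in $\alpha$. If $\circ(\alpha)=0$ then $\alpha$ may well be the least inaccessible and $X \cap \alpha = \emptyset$; in that case pure conditions form a trivial forcing (as the Remark following the corollary notes), and closure is vacuous. If $\circ(\alpha)>0$ then $\alpha$ is measurable and inaccessibles are indeed unbounded below it, so your reduction goes through. Either way your direct argument---observing that for pure conditions the $s^*$ step of Proposition \ref{strongclosure} is vacuous and the $A^*,F^*$ construction needs only $\xi<\alpha$---already suffices and is the cleaner justification.
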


\begin{rmk}
When we restrict $P_\alpha$ to the set $\{ p \in P_\alpha \mid p$ is pure$\}$, $\leq^*_\alpha$ and $\leq_\alpha$ coincide.
In addition, if $\circ(\alpha)=0$, a pure condition in $P_\alpha$ is simply a trivial forcing, which makes $(\{ p \in P_\alpha \mid p$ is pure$\},\leq^*_\alpha)$ $\infty$-closed.
\end{rmk}

\begin{defn}{\textbf{(The quotient forcing)}}
\label{quotient}
Let $\beta<\alpha$, and $\dot{\nu}$ is a $P_\beta$-name of an ordinal in the interval $[\beta,\alpha)$.
Let $Q$ be either $P_\beta$ or $P_\beta * P_{\dot{\nu}}/P_\beta$-generic.
Let $G$ be $Q$-generic.
We define $P_\alpha[G]$ as the collection of $s_1[G]^\frown \langle \alpha, A, F[G] \rangle$ where there is $s_0 \in G$, $s_0^\frown s_1{}^\frown \langle \alpha,A,F \rangle \in P_\alpha$, and $F[G](\gamma)=F(\gamma)[G]$ for all $\gamma \in \dom(F)$.

We note that those notions make sense due to Proposition \ref{theproperties} (\ref{factor1}), (\ref{factor2}), and (\ref{twolevelquotient}) below $\alpha$, and the fact that $\min(A)>\bar{\nu}>\beta$.
We say that $P_\alpha/Q$ is the collection of $Q$-name $\dot{q}$ such that $\Vdash_\beta \dot{q} \in P_\alpha[\dot{G}]$ where $\dot{G}$ is the canonical name for a generic for $Q$.
We assume that $\dot{q}$ is also a nice name.
The extension relation, direct extension relation, minimal one-step extension are those induced from $P_\alpha$.
Note that it is possible that $P_\alpha[G]$ has no conditions of the form $\langle \alpha,A,F \rangle$ (for example, item $(2)$ in Definition \ref{basicsecondcase}).
In that case, we do not define pure conditions, impure conditions, and stems.
Assume $P_\alpha[G]$ has conditions of the form $\langle \alpha,A,F \rangle$. $P_\alpha[G]$, a {\em pure} condition is of the form $\langle \alpha,A,F \rangle$.
Other conditions which are not $1_{P_\alpha[G]}$ are called {\em impure}.
We will call the corresponding relations $\leq_{\alpha/Q}$ and  $\leq^*_{\alpha/Q}$, respectively.
If $Q=P_\beta$, we abbreviate $\leq_{\alpha/Q}$ by $\leq_{\alpha/\beta}$, and so on.

\end{defn}

\begin{defn}

Define $P_\alpha/P_\alpha$ as the trivial forcing (in $V^{P_\alpha}$).

\end{defn}

\begin{prop}

Let $\beta$, $\dot{\nu}$, $Q$, and $G$ be as in Definition \ref{quotient}.
Then, 

\begin{enumerate}

\item $\Vdash_Q |P_\alpha/Q| \leq \alpha^+$, so $P_\alpha/Q$ is $\alpha^{++}$-c.c. Furthermore, $|P_\alpha/Q| \leq \alpha^+$.

\item Suppose that pure conditions in $P_\alpha/Q$ can be defined, $\Vdash_Q \dot{q} \in P_\alpha/Q$ is pure.
Fix $\xi<\alpha$ and for $\epsilon<\xi$, there are $p_\epsilon \in Q$ and $\dot{q}_\epsilon$ such that $p_\epsilon \Vdash_Q \dot{q}_\epsilon \leq^* \dot{q}$.
Then there is $\dot{q}^*$ such that $\Vdash_Q \dot{q}^* \leq^* \dot{q}$, and for all $\epsilon$, $p_\epsilon \Vdash_Q \dot{q}^* \leq^* \dot{q}_\epsilon$.

\item Suppose that pure conditions in $P_\alpha[G]$ are defined, then $ \{p \in P_\alpha[G] \mid p$ is pure$\}, \leq^*)$ is $\alpha$-closed.

\item Let $\gamma< \min(X \setminus (\beta+1))$.
Suppose that $\Vdash_\beta \dot{q} \in P_\alpha/P_\beta$.
For $\epsilon<\gamma$, let $p_\epsilon\in P_\beta$ and $\dot{q}_\epsilon$ be such that $p_\epsilon \Vdash_\beta \dot{q}_\epsilon \leq^* \dot{q}$.
Then there is $\dot{q}^*$ such that $\Vdash_\beta \dot{q}^* \leq^* \dot{q}$ and for all $\epsilon$, $\dot{q}^* \leq^* \dot{q}_\epsilon$.
 In particular, $\Vdash_Q  (P_\alpha/P_\beta,\leq^*)$ is $\min(X \setminus (\beta+1))$-closed.

\end{enumerate}

\end{prop}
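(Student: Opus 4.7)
The plan is to treat the four items in order, leveraging the results already proved for $P_\alpha$ itself (Proposition \ref{cc}, Proposition \ref{strongclosure}, Corollary \ref{closure}), the measure-theoretic tools Propositions \ref{measureoneinforcing} and \ref{measureinground}, and the inductive hypothesis Proposition \ref{theproperties} at levels below $\alpha$. For item (1) the plan is to count nice names in $V$: each $\dot q \in P_\alpha/Q$ arises from some $p_0{}^\frown p_1 \in P_\alpha$ with $p_0 \in Q$ via $\dot q = p_1[\dot G]$, so associating $\dot q$ canonically with such a pair (Scott-minimal nice name for the $p_0$ part, fixed choice of representative $p_1$) injects $P_\alpha/Q$ into $P_\alpha \times \{\text{nice names coding elements of } Q\}$. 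By Proposition \ref{cc} and GCH, $|P_\alpha| \leq \alpha^+$ and $|Q| \leq \bar\nu^+ < \alpha$, whence $|P_\alpha/Q|^V \leq \alpha^+$. Both cardinality assertions and the $\alpha^{++}$-c.c.\ follow immediately.

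For item (2) the plan is to execute the proof of Proposition \ref{strongclosure} at the level of $Q$-names. Write $\dot q = \langle \alpha,\dot A,\dot F\rangle$ and $\dot q_\epsilon = \langle \alpha,\dot A^\epsilon,\dot F^\epsilon\rangle$; purity together with $p_\epsilon \Vdash \dot q_\epsilon \leq^* \dot q$ forces $\dot F^\epsilon(\gamma)_0 = \dot F(\gamma)_0$ in the common domain. First form the intersection name $\dot A^{**}$, interpreted in any generic $G$ as $\dot A[G] \cap \bigcap\{\dot A^\epsilon[G] : p_\epsilon \in G\}$; by Proposition \ref{measureoneinforcing}, $\vec U(\alpha)$ extends to $\vec U^\prime(\alpha)$ in $V[G]$, and by $\alpha$-completeness (since $\xi<\alpha$) the interpretation lies in $\vec U^\prime(\alpha)$. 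By Proposition \ref{measureinground} and the density of $\vec U(\alpha)$ in $\vec U^\prime(\alpha)$, there is $B^* \in \vec U(\alpha)$ with $\Vdash B^* \subseteq \dot A^{**}$, which one further shrinks so that $\min(B^*) > \xi$. For each $\gamma \in B^*$ in the intended domain, the names $\{\dot F^\epsilon(\gamma)_1\}_{\epsilon<\xi}$ and witnesses $\{p_\epsilon\}$ satisfy the hypothesis of the inductive instance of Proposition \ref{theproperties}(\ref{nameclosed}) at level $\gamma$ applied to the two-step forcing $Q * P_\gamma$, since $\xi < \gamma < \min(X \setminus (\gamma+1))$; this produces a single name $\dot F^*(\gamma)_1$ forced to be $\leq^*$-below each $\dot F^\epsilon(\gamma)_1$ relative to $p_\epsilon$. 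Assembling $\dot F^*(\gamma) := \langle \dot F(\gamma)_0,\dot F^*(\gamma)_1\rangle$ and $\dot q^* := \langle \alpha,B^*,\dot F^*\rangle$ yields the required name.

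For item (3) the plan is to argue directly in $V[G]$: given a $\leq^*$-descending sequence $\langle r_\epsilon = \langle \alpha,A_\epsilon,F_\epsilon\rangle : \epsilon < \xi\rangle$ of pure conditions with $\xi < \alpha$, take $B^* \in \vec U(\alpha) \cap V$ with $B^* \subseteq \bigcap_\epsilon A_\epsilon$ and $\min(B^*) > \xi$ (using the same measure-theoretic step as in item (2)); then for each $\gamma \in B^*$ of the appropriate Mitchell order, $\langle F_\epsilon(\gamma)_1\rangle_{\epsilon<\xi}$ is $\leq^*$-descending of length $\xi<\gamma$, so the inductive instance of Proposition \ref{theproperties}(\ref{ccandclosed}) at $\gamma$ supplies a common $\leq^*$-lower bound $F^*(\gamma)_1$, and $\langle \alpha,B^*,F^*\rangle$ is the desired lower bound. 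Item (4) is item (2) specialised to $Q = P_\beta$, extended to allow $\dot q$ to be impure: since $\gamma < \min(X \setminus (\beta+1)) \leq (\stem(\dot q)_0)_0$ whenever the stem is non-empty, the stem parts of the $\dot q_\epsilon$'s can be merged componentwise using Proposition \ref{strongclosure} and the inductive name-closure at the lower levels, and the triple part is treated exactly as in item (2); the ``in particular'' clause is the restatement of this name-level conclusion as the $\min(X \setminus (\beta+1))$-closure of $(P_\alpha/P_\beta,\leq^*)$ in $V^{P_\beta}$. The main obstacle is the bookkeeping in item (2): ensuring the output $\dot q^*$ is a single $V$-side name rather than a generic-dependent object, and invoking the inductive name-closure at each $\gamma \in B^*$ with the correct two-step forcing $Q * P_\gamma$ so that the $p_\epsilon$ act as $Q$-side witnesses; the measure-theoretic shrinkings are routine, but the uniform, $V$-definable choice of the names $\dot F^*(\gamma)_1$ across all $\gamma$ requires care.
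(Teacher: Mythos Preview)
Your proposal is correct and follows essentially the same approach as the paper, the main difference being that you are more explicit where the paper is terse: the paper counts $|P_\alpha[G]|$ in $V[G]$ via the argument of Proposition~\ref{cc} and then invokes nice names for (1), works directly in $V[G]$ without shrinking the intersected measure-one set back to $V$ for (2) (a $Q$-name $\dot q^*$ suffices), and dismisses (3) as a consequence of (2) and (4) as ``similar to (2).'' Your additional care with Propositions~\ref{measureoneinforcing}--\ref{measureinground} and with the impure case in (4) is sound but not strictly required.
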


\begin{proof}

\begin{enumerate}

\item By Proposition \ref{theproperties} (\ref{prikry}) and (\ref{ccandclosed}), $Q$ preserves cardinals and GCH, by the same counting argument as in Proposition \ref{cc}, $|P_\alpha[G]|\leq \alpha^+$ in $V[G]$.
Now since $|Q| \leq \alpha^+$ and $\Vdash_Q |P_\alpha/Q| \leq \alpha^+$ and GCH holds in $V$, we have $|P_\alpha/Q|\leq \alpha^+$.

\item Let $G$ be $Q$-generic.
In $V[G]$, write $\dot{q}[G]=\langle \alpha,A,F \rangle$ and for $\epsilon$ with $p_\epsilon \in G$, set $\dot{q}_\epsilon[G]=\langle \alpha, A^\epsilon,F^\epsilon \rangle$.
Take $A^*=\cap_{\{\epsilon \mid p_\epsilon \in G\}} A^\epsilon$ and for $\beta \in A^*$.
Set $F^*(\gamma)_1$ such that for each $\epsilon$ $F^*(\gamma)_1$ is forced to be $\leq^*$-stronger than $F^\epsilon(\gamma)_1$, and let $F^*(\gamma)=\langle F^p(\gamma)_0, F^*(\gamma)_1 \rangle$.
Let $q^*=\langle \alpha,A^*,F^* \rangle$ and $\dot{q}^*$ be a name of such $q^*$.
Then we can check that $\dot{q}^*$ is as required.

\item It is a consequence of (2).

\item The proof is similar to $(2)$.

\end{enumerate}

\end{proof}

We now discuss on factorizations on $P_\alpha$.
Let $\beta,\dot{\nu}$, $Q$, and $G$ be as in Definition \ref{quotient}.
Let $H$ be $P_\alpha[G]$-generic.
Then, let $I$ be the collection of $s_0^\frown s_1^\frown \langle \alpha,A,F \rangle \in P_\alpha$ such that $s_0 \in G$ and $(s_1^\frown \langle \alpha,A, F \rangle)[G] \in H$.
Then $I$ is $P_\alpha$-generic, and $V[I]=V[G][H]$.
On the contrary, let $I$ be $P_\alpha$-generic.
Let $p \in I$ (say $p \neq 1_{P_\alpha})$.
Let $p=s_0^\frown s_1^\frown \langle \alpha,A,F \rangle$.
Set $Q^\prime$ be the forcing containing $s_0$.
Let $G=\{t_0 \in Q^\prime \mid$ there are $t_1,A^\prime,$ and $F^\prime$ such that $t_0^\frown t_1^\frown \langle \alpha,A^\prime,F^\prime \rangle \in I\}$.
Then, $G$ is $Q^\prime$-generic.
Let $H=\{(t_1^\frown \langle\alpha,A^\prime,F^\prime \rangle)[G] \mid \exists t_0 \in G (t_0^\frown t_1^\frown \langle \alpha,A^\prime,F^\prime \rangle) \in I\}$.
Then, $H$ is $P_\alpha/G$-generic over $V[G]$, and $V[I]=V[G][H]$.
Hence, Proposition \ref{theproperties} (\ref{factor1}) and (\ref{factor2}) follow at level $\alpha$.

We further discuss on $P_\alpha[G]/P_\gamma[G]$.
Again, let $\beta,\dot{\nu},Q,$ and $G$ be as in Definition \ref{quotient}.
We define $P_\alpha[G]/P_\gamma[G]$ as follows.
Recall that by Proposition \ref{theproperties} (\ref{factor1}), if $H$ is $P_\gamma[G]$-generic, then there is the canonical $P_\alpha$-generic $I$ with $V[G][H]=V[I]$.
Define $P_\alpha[G]/P_\gamma[G]$ as the collection of $P_\gamma[G]$-name $\dot{q}$ such that $\Vdash_{P_\gamma[G]} \dot{q} \in P_\alpha[G][\dot{I}]$, where $\dot{H}$ is the name for $P_\gamma[G]$-generic, and $\dot{I}$ is the name for $P_\gamma$-canonical generic such that $\Vdash_{P_{\gamma}[G]} V[G][\dot{H}]=V[\dot{I}]$.
Finally, for $\dot{q}$ with $\Vdash_\gamma \dot{q} \in P_\alpha/P_\gamma$, define $\dot{q}[G]$ as a $P_\gamma[G]$-name of an element in $P_\alpha[G]/P_\gamma[G]$ as follows.
For each $H$ which is $P_\gamma[G]$-generic, and $I=G*H$, we see that $\dot{q}[I] \in P_\alpha[I]$.
Find $p \in P_\alpha$ such that $p=s_0^\frown s_1$ where $s_0 \in I$.
Let $\dot{q}[G][H]=s_1[I]$.
We need that $\dot{q}[G][H]=s_1[I]$ for each such $H$ and the corresponding generic $I$.
One can trace these genericities to show that Proposition \ref{theproperties} (\ref{twolevelquotient}) holds at level $\alpha$.

\section{The Prikry property}
\label{theprikryproperty}
In this section we give a proof of the Prikry property at level $\alpha$.
If $\circ(\alpha)=0$, then $P_\alpha$ is equivalent to $P_\beta$ or $P_\beta * P_{\dot{\nu}}/P_\beta$ for some $\beta,\dot{\nu}$ such that $\Vdash_\beta \dot{\nu} \in [\beta,\alpha)$.
These are either a Prikry-type forcing or a two-step iteration of Prikry-type forcings, hence, has the Prikry property.
We assume throughout this section that $\circ(\alpha)>0$.
Note that when $p=\langle \alpha,A,F\rangle$ is pure, then $\dom(F)=A$.
However,  for each impure condition $p=s^\frown \langle \alpha,A,F \rangle$, it is possible that $\dom(F) \neq A$.
Our proofs in this section focus on the pure conditions, with some modification, they also work for impure conditions.

\begin{lemma}
\label{maximaltop}
Let $D \subseteq P_\alpha$ be a dense open set, $p=\langle \alpha,A,F \rangle \in P_\alpha$.
Then there is $p^\prime=\langle \alpha, A^\prime, F^\prime \rangle$ such that for each stem $s$, if $s^\frown \langle \alpha,B,F^\prime \rangle \leq_\alpha p^\prime$ is in $D$, then $s^\frown \langle \alpha, A^\prime \setminus \tp(s), F^\prime \restriction \{ \gamma \in A^\prime \setminus \tp(s) \mid \circ(\gamma)>\circ(s)\} \rangle \in D$.

\end{lemma}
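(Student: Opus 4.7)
The plan is the standard ``decide per stem'' strategy for Radin-type forcings, combined with a diagonal intersection in $\vec U(\alpha)$ to amalgamate the stem-by-stem decisions. Since $\alpha$ is inaccessible with GCH and $|P_\beta|\le\beta^+$ for $\beta<\alpha$, the collection $S$ of stems $s$ that could legitimately precede a top part $\langle\alpha,B,F^*\rangle$ extending $p$ has cardinality at most $\alpha$; moreover, for each $\delta<\alpha$ the number of stems $s$ with $\tp(s)\le\delta$ is bounded by $\delta^+<\min(X\setminus(\delta+1))$. This bound is what will let the closure of $\le^*$ absorb the stems at each stage.

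For each stem $s\in S$ I perform a dichotomy. Ask whether there exist a direct extension $q_s=\langle\alpha,A_s,F_s\rangle\le^*_\alpha p$ and a measure-one $B_s\subseteq A_s\setminus\tp(s)$ with
\[
s^\frown\langle\alpha,B_s,F_s\restriction\{\gamma\in B_s:\circ(\gamma)>\circ(s)\}\rangle\in D.
\]
In the ``yes'' case, fix such witnesses and further shrink $A_s$ so that $A_s\setminus\tp(s)=B_s$; then the canonical extension of $q_s$ with stem $s$ already lies in $D$. In the ``no'' case set $q_s:=p$ and record that no canonical extension with stem $s$, regardless of the top function, can land in $D$.

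To amalgamate $\{q_s:s\in S\}$, I proceed in stages $\delta<\alpha$, handling at stage $\delta$ all stems with $\tp(s)\le\delta$. There are fewer than $\min(X\setminus(\delta+1))$ such stems, so Proposition \ref{theproperties}(\ref{nameclosed}) and Corollary \ref{closure} yield a single $\le^*_\alpha$-strengthening $p_\delta=\langle\alpha,A_\delta,F_\delta\rangle$ of all these $q_s$. Set $A':=\triangle_{\delta<\alpha} A_\delta$, a member of $\vec U(\alpha)$ by the variant of diagonal intersection from Section \ref{basicfacts}; and for each $\gamma\in A'$ let $F'(\gamma)_1$ be a $\le^*$-common strengthening of the fewer-than-$\gamma^+$-many relevant $F_\delta(\gamma)_1$, again invoked from Proposition \ref{theproperties}(\ref{nameclosed}). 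This defines the candidate $p'=\langle\alpha,A',F'\rangle$.

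Verification is then clean: if $s^\frown\langle\alpha,B,F'\rangle\in D$, then $F'$ itself witnesses the ``yes'' branch for $s$, so the canonical extension of $q_s$ with stem $s$ lies in $D$; since $p'\le^*_\alpha q_s$ we have $A'\setminus\tp(s)\subseteq B_s$ and $F'\restriction\{\gamma:\circ(\gamma)>\circ(s)\}$ is a $\le^*$-strengthening of $F_s\restriction\{\gamma:\circ(\gamma)>\circ(s)\}$, so the canonical extension of $p'$ with stem $s$ is a $\le_\alpha$-extension of the witness in $D$, and membership in $D$ follows from openness. The main obstacle I anticipate is the diagonal bookkeeping: checking that the diagonal intersection and the name-level diagonalization really deliver a simultaneous $\le^*$-lower bound of every $q_s$ above the ``activation'' ordinal $\tp(s)$, while keeping $(A',F')$ in the ground model and without losing measure-one along the way. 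The enumeration by $\tp(s)$ is essential here, because it aligns the diagonal construction with the closure estimates $\delta^+<\min(X\setminus(\delta+1))$.
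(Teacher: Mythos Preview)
Your proposal is correct and follows essentially the same route as the paper: enumerate the possible stems, for each one try to shrink the top part so that the canonical extension lands in $D$, amalgamate the witnesses using the $\le^*$-directedness from Proposition~\ref{theproperties}(\ref{nameclosed}), and finish with a diagonal intersection in $\vec U(\alpha)$ together with openness of $D$. The only organizational difference is that the paper runs the recursion along $\gamma\in A$ (at stage $\gamma$ it treats exactly the stems in $P_\gamma\cup P_\gamma*P_{\dot\nu_\gamma}/P_\gamma$, of which there are at most $\bar\nu_\gamma^+$), whereas you first perform the dichotomy independently for every stem and then amalgamate in stages $\delta<\alpha$; your counting claim ``at most $\delta^+$ stems with $\tp(s)\le\delta$'' is a little loose (the relevant bound is $\bar\nu_\gamma^+<\min(A\setminus(\gamma+1))$, which can exceed $\delta^+$ for $\delta$ strictly between consecutive elements of $A$), but indexing by $\gamma\in A$ instead of all $\delta<\alpha$ fixes this immediately and the rest of your argument goes through unchanged.
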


\begin{proof}

We build $\langle A_\gamma, F_\gamma \mid \gamma \in A \rangle$ as follows:

\begin{enumerate}

\item $\langle A_\gamma \mid \gamma \in A \rangle$ is $\subseteq$-decreasing, $A_\gamma \subseteq A$, $A_\gamma \in \vec{U}(\alpha)$, and $\min(A_\gamma)>\gamma$.

\item for $\gamma^\prime<\gamma<\xi$ and $\xi \in A_{\gamma}$, we have that 

\begin{itemize}

\item $\langle \alpha, A_{\gamma} \restriction \xi, F_{\gamma} \restriction (A_\gamma \restriction \xi) \rangle \leq^*_\xi \langle \alpha, A_{\gamma^\prime} \restriction \xi, F_{\gamma^\prime} \restriction (A_{\gamma^\prime} \restriction \xi) \rangle$.

\item $\langle \alpha, A_\gamma \restriction \xi, F_\gamma \restriction (A_\gamma \restriction \xi) \rangle \Vdash_\xi F_\gamma(\xi)_0=F_{\gamma^\prime}(\xi)_0$ and $F_{\gamma^\prime}(\xi)_1 \leq^* F_{\gamma^\prime}(\xi)_1$ (we assume that the name $F_\gamma(\xi)_0$ is always the same, which is $F(\xi)_0$).

\end{itemize}

\item \label{maxconstruction}  let $\dot{\nu}_\gamma=F(\gamma)_0$.
For each $s \in P_\gamma \cup P_\gamma * P_{\dot{\nu}_\gamma}/P_\gamma$, if $s^\frown \langle \alpha,B,H \rangle \leq^*_\alpha s^\frown \langle \alpha, A_\gamma, F_\gamma \rangle$ is in $D$, then already $s^\frown \langle \alpha, A_\gamma \setminus \tp(s), F_\gamma\restriction \{ \gamma \in A^\gamma \setminus \tp(s) \mid \circ(\gamma)>\circ(s)\}\rangle \in D$.

\end{enumerate}

Suppose for $\gamma^\prime<\gamma$, $A_{\gamma^\prime}$ and $F_{\gamma^\prime}$ is built (note that the case $\gamma=\min(A)$ can be done in a slightly simpler setting).
We now build $A_\gamma$ and $F_\gamma$.
Let $A_\gamma^\prime=\cap_{\gamma^\prime<\gamma} A_{\gamma^\prime} \setminus(\gamma+1)$.
For $\xi \in A_\gamma^\prime$, let $F_\gamma^\prime(\xi)_0=F(\xi)_0$ and $F_\gamma^\prime(\xi)_1$ be a name such that $F_\gamma^\prime(\xi)_1$ is forced to be $\leq^*$-stronger than $ F_{\gamma^\prime}(\xi)_1$ for all $\gamma^\prime<\gamma$.
This is possible since by Proposition \ref{theproperties}, item (\ref{nameclosed}), $\Vdash_\xi (P_{F(\xi)_0}/P_\xi, \leq^*)$ is $\xi^+$-closed, and $\xi>\gamma$.
Let $p^\prime_\gamma=\langle \alpha, A_\gamma^\prime,F_\gamma^\prime \rangle$.
Let $\dot{\nu}_\gamma=F(\gamma)_0$ and $\bar{\nu}_\gamma$ be the corresponding value.
By Proposition \ref{theproperties}, item (\ref{ccandclosed}), let $\langle s_\delta \mid \delta < \bar{\nu}_\gamma^+ \rangle$ be an enumeration of conditions in $P_\gamma \cup P_\gamma * P_{\dot{\nu}}/P_\gamma$.
For each $\delta$, if there is $s_{\gamma,\delta} {}^\frown \langle \alpha,B,H \rangle \leq^*_\alpha s_{\gamma,\delta} {}^\frown p_\gamma^\prime$ being in $D$, we record $A_{\gamma,\delta}=B$, and for $\xi \in \dom(H)$, let $\dot{q}_{\gamma,\delta}=H(\xi)_1$.
Otherwise, let $A_{\gamma,\delta}=A_\gamma^\prime$ and for $\xi \in \dom(H)$, let $\dot{q}_{\gamma,\delta}=F_\gamma^\prime(\xi)_1$.
We now let $A_\gamma= \cap_{\delta<\bar{\nu}_\gamma^+} A_{\gamma,\delta}$, and for $\xi \in A_\gamma$, let $F_\gamma(\xi)_1$ be a name such that $F_\gamma(\xi)$ is forced to be $\leq^*$-stronger than $\dot{q}_{\gamma,\delta}$ for all $\delta$.
Again, this is possible by Proposition \ref{theproperties}  (\ref{nameclosed}).

We now let $A^\prime=\triangle_{\gamma \in A} A_\gamma$.
For each $\gamma \in A^\prime$, let $F^\prime(\gamma)_1$ be a name which is forced to be $\leq^*$-stronger than $F_{\gamma^\prime}(\gamma)_1$ for all $\gamma^\prime<\gamma$.
Again, this is possible by Proposition \ref{theproperties} (\ref{nameclosed}).
Let $F^\prime(\gamma)=\langle F(\gamma)_0, F^\prime(\gamma)_1 \rangle$ and $p^\prime=\langle \alpha,A^\prime,F^\prime \rangle$.
It remains to show that $p^\prime$ satisfies our criteria.
Let $s^\frown \langle \alpha,B,H \rangle \leq_\alpha p^\prime$ being in $D$.
Let $Q$ be the forcing such that  $s \in Q$.
Hence, $s=s_{\gamma, \delta}$ for some $\delta$.
Since $B \subseteq A^\prime \setminus (\gamma+1)$, we have that $B \subseteq A_\gamma$.
For each $\xi \in B$, it is forced that $H(\xi)_1 \leq^*F_\gamma(\xi)_1$. 
By (\ref{maxconstruction}), $s^\frown \langle \alpha, A_\gamma \setminus \tp(s),F  \restriction \{ \gamma \in A^\gamma \setminus \tp(s) \mid \circ(\gamma)>\circ(s)\} \rangle \in D$.
Since $D$ is open, $s^\frown \langle \alpha, A^\prime \setminus \tp(s), F^\prime \restriction \{ \gamma \in A^\prime \setminus \tp(s) \mid \circ(\gamma)>\circ(s)\} \rangle \in D$ as required.

\end{proof}

\begin{lemma}
\label{pigeonhole}
Fix $n>0$.
Let $p=\langle \alpha,A,F \rangle \in P_\alpha$.
For each $\vec{\gamma} \in [A]^n$, let $s_{\vec{\gamma}} \leq^*\stem(p+\vec{\gamma})$.
Then there is $p^*=\langle \alpha, A^*, F^* \rangle \leq^*_\alpha p$ such that for every $\vec{\gamma} \in [A^*]^n$, $\stem(p^*+\vec{\gamma}) \leq^* s_{\vec{\gamma}}$.

\end{lemma}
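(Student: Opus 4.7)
The plan is to induct on $n$, with all the work in the base case $n=1$; the step $n\to n+1$ is a bookkeeping reduction that applies the same pigeonhole inside $p+\langle\gamma_0\rangle$ for each choice of smallest coordinate $\gamma_0\in A$ and then runs the base case once more at the top.

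For $n=1$, unwinding Definition \ref{directext} each $s_\gamma$ has the form $\langle\gamma,A_\gamma,F_\gamma,F(\gamma)_0,\dot q_\gamma\rangle$, with $A_\gamma\subseteq A\restriction\gamma$, $F_\gamma$ a direct-extension strengthening of $F\restriction(A\restriction\gamma)$, and $\dot q_\gamma\leq^{*}_{F(\gamma)_0/\gamma}F(\gamma)_1$. First I would apply Proposition \ref{integrate} with $B_\gamma:=A_\gamma$ to produce $A^0\in\vec U(\alpha)$, $A^0\subseteq A$, with $A^0\restriction\gamma\subseteq A_\gamma$ for every $\gamma\in A^0$; shrinking using $\alpha$-completeness, I also assume $\min(A^0)>\circ(\alpha)$. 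Next, for each fixed $\delta\in A^0$ and each $\beta$ with $\circ(\delta)<\beta<\circ(\alpha)$, the map $\gamma\mapsto F_\gamma(\delta)_1$ is defined on the $U(\alpha,\beta)$-large set $\{\gamma\in A^0:\circ(\gamma)=\beta\}$ and takes values in the ground-model collection of $P_\delta$-nice-names for elements of $P_{F(\delta)_0}/P_\delta$; by GCH and Proposition \ref{theproperties}(\ref{ccandclosed}) that collection has size at most $F(\delta)_0^{++}<\alpha$, so $\alpha$-completeness of $U(\alpha,\beta)$ yields $Y_{\delta,\beta}\in U(\alpha,\beta)$ on which the map is constant, with common value $\dot r_{\delta,\beta}$.

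Now, for each $\delta\in A^0$, I would apply Proposition \ref{theproperties}(\ref{nameclosed}) at level $\delta$ to build a single name $F^*(\delta)_1$ that is $\leq^{*}$ below both $\dot q_\delta$ and each $\dot r_{\delta,\beta}$: the number of names to meet is at most $\circ(\alpha)+1<\min(X\setminus(\delta+1))$ since $\delta>\circ(\alpha)$, so the hypothesis of (\ref{nameclosed}) is satisfied. Set $F^*(\delta):=\langle F(\delta)_0,F^*(\delta)_1\rangle$ and
\[ Y_\delta := \{\gamma\in A^0:\circ(\gamma)\leq\circ(\delta)\}\cup\{\gamma\in A^0:\gamma>\delta,\ \circ(\gamma)>\circ(\delta),\ \gamma\in Y_{\delta,\circ(\gamma)}\}, \]
which lies in $\vec U(\alpha)$ because each slice $\{\gamma:\circ(\gamma)=\beta\}\cap Y_\delta$ lies in $U(\alpha,\beta)$. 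Put $A^*:=A^0\cap\triangle_{\delta\in A^0}Y_\delta\in\vec U(\alpha)$ and $p^*:=\langle\alpha,A^*,F^*\restriction A^*\rangle$. For any pair $\gamma\in A^*$, $\delta\in A^*\restriction\gamma$, the diagonal intersection forces $\gamma\in Y_\delta$ and $\circ(\gamma)>\circ(\delta)$, so $\gamma\in Y_{\delta,\circ(\gamma)}$ and $F^*(\delta)_1\leq^{*}\dot r_{\delta,\circ(\gamma)}=F_\gamma(\delta)_1$; together with $A^*\restriction\gamma\subseteq A_\gamma$ and $F^*(\gamma)_1\leq^{*}\dot q_\gamma$, this gives $\stem(p^*+\langle\gamma\rangle)\leq^{*}s_\gamma$, finishing the base case.

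For the step $n\to n+1$, I would fix the smallest coordinate $\gamma_0\in A$ and treat the subfamily $\{s_{\langle\gamma_0\rangle^\frown\vec\delta}:\vec\delta\in[A\setminus(\gamma_0+1)]^n\}$ as stem data on $n$-tuples from the top coordinate of the impure condition $p+\langle\gamma_0\rangle$; invoking the impure-condition version of the lemma at level $n$ yields $\tilde p^{\gamma_0}\leq^{*}p+\langle\gamma_0\rangle$ whose $n$-step stems absorb the data. The stems $\tilde s_{\gamma_0}:=\stem(\tilde p^{\gamma_0})\leq^{*}\stem(p+\langle\gamma_0\rangle)$ then form a $1$-parameter family to which the base case applies; combining that output with Proposition \ref{integrate} applied to the top $A$-sets and the name-level closure applied to the top $F$-values of the $\tilde p^{\gamma_0}$, so that the resulting pure $p^*$ is $\leq^{*}$ each $\tilde p^{\gamma_0}$ both on stem and on top, yields the desired $p^*$. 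The main obstacle is Step 3 of the base case: one lacks $\alpha$-closure of $(P_{F(\delta)_0}/P_\delta,\leq^{*})$, so one cannot directly meet $\alpha$ many names $F_\gamma(\delta)_1$; the argument is forced to split by Mitchell order and use each $U(\alpha,\beta)$-normality separately to cut the number of distinct names down to $\leq\circ(\alpha)+1$ before invoking Proposition \ref{theproperties}(\ref{nameclosed}).
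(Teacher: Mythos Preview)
Your base case is essentially the paper's argument, and in fact you handle a point the paper glosses over: the pigeonhole constant that the paper calls $\dot q_\gamma^*$ may a priori depend on which $U(\alpha,\beta)$ is used, so you correctly split by Mitchell order first and then take a $\leq^*$-lower bound of the $\leq\circ(\alpha)+1$ resulting names via Proposition~\ref{theproperties}(\ref{nameclosed}).

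The inductive step, however, diverges from the paper and contains a gap. The paper peels off the \emph{last} coordinate $\xi$: the first $m$ entries $s^0_{\vec\gamma^\frown\langle\xi\rangle}$ live in $V_\tau$ for some $\tau<\alpha$ independent of $\xi$, so pigeonhole on each $U(\alpha,\beta)$ stabilises them, and then the \emph{pure} induction hypothesis at level $m$ applies directly to $\langle\alpha,A',F\rangle$ with the stabilised $m$-tuple data; a final strengthening of $F^*(\xi)_1$ absorbs the last entries (there are $<\xi$ of these for each $\xi$). You instead peel off the \emph{first} coordinate $\gamma_0$ and invoke an ``impure-condition version'' of the lemma on $p+\langle\gamma_0\rangle$. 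That version is neither stated nor proved, and it is not free: for fixed $\gamma_0$, the first entry of $s_{\langle\gamma_0\rangle^\frown\vec\delta}$ is a direct extension of $\stem(p+\langle\gamma_0\rangle)$ that may vary with $\vec\delta$, there are up to $\alpha$ many $\vec\delta$, and the $\leq^*$-closure available at the stem is only $\gamma_0$. One can salvage this by a preliminary pigeonhole over Mitchell-order profiles of $\vec\delta$ to cut the number of distinct first entries down to $\leq\circ(\alpha)^n<\gamma_0$ before taking a lower bound, but this is real additional work, not a consequence of the pure induction hypothesis. If you want to run the recursion from the bottom coordinate, you should strengthen the induction hypothesis to cover impure conditions and carry both through simultaneously.

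A minor slip: Proposition~\ref{integrate} concerns sets $B_\gamma\in\vec U(\gamma)$ below $\gamma$, not the top sets $\tilde A^{\gamma_0}\in\vec U(\alpha)$; what you need there is the diagonal intersection $\triangle_{\gamma_0}\tilde A^{\gamma_0}$.
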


\begin{proof}

We induct on $n$. 
For $n=1$, each $s_{\langle \gamma \rangle}$ is of the form $\langle \gamma,A_\gamma, F_\gamma, \dot{\nu}_\gamma, \dot{q}_\gamma \rangle$.
Use Lemma \ref{integrate} to obtain $A^\prime$ such that for all $\gamma \in A^\prime$, $A^\prime \restriction \gamma \subseteq A_\gamma$.
For $\gamma \in A^\prime$, let $A^\gamma \in \vec{U}(\alpha)$, $A^\gamma \subseteq A^\prime$, and $\dot{q}_\gamma^*$ be such that for $\xi \in A^\gamma \setminus (\gamma+1)$, $F_\xi(\gamma)_1=\dot{q}_\gamma^*$ (this is possible because as in Proposition \ref{theproperties} (\ref{closure}), the number of nice names is small).
Take $A^*=\triangle_\gamma A^\gamma$.
For $\gamma \in A^*$, take $F^*(\gamma)_1$ be a name which is forced to be $\leq^*$-stronger than $\dot{q}_\gamma^*$ and $\dot{q}_\gamma$.
Take $p^*=\langle \alpha, A^*, F^* \rangle$.
For each $\xi \in A^*$, let $t_{\langle \xi \rangle}=\stem(p^*+\langle \xi \rangle)$.
Note that $t_{\langle \xi \rangle}=\langle \xi,A^* \restriction \xi, F^* \restriction (A^* \restriction \xi), F^*(\xi)_0,F^*(\xi)_1 \rangle$.
Since $A^* \restriction \xi \subseteq A_\xi$, and for $\gamma<\xi$, $\xi \in A^\gamma$, we have that $F^*_\xi(\gamma)_1$ is forced to be $\leq^*$-stronger than $\dot{q}_\gamma^*=F_\xi(\gamma)_1$.
Finally, $F^*(\xi)_1$ is forced to be $\leq^*$-stronger than $\dot{q}_\xi$.
Hence, $t_{\langle \gamma \rangle} \leq^* s_{\langle \gamma \rangle}$.

We now assume $n=m+1$ for some $m>0$. 
For each $\vec{\gamma}^\frown \langle \xi \rangle \in [A]^n$, let

\begin{align*}
s_{\vec{\gamma}^\frown \langle \xi \rangle}^0 &=t_{\vec{\gamma}^\frown \langle \xi \rangle} \restriction m, \\
s_{\vec{\gamma}^\frown \langle \xi \rangle}^1 &= (t_{\vec{\gamma}^\frown \langle \xi \rangle})_m.
\end{align*}

Note that for $\vec{\gamma}$, there is $\tau$ such that for each $\xi$, $s_{\vec{\gamma}^\frown \langle \xi \rangle}^0 \in V_{\tau}$ for some $\tau<\alpha$.
Hence, for $\beta<\circ(\alpha)$ and $\vec{\gamma} \in [A]^m$, there are $B_{\vec{\gamma}}(\beta)\in U(\alpha,\beta)$ and $s_{\langle \gamma_0, \cdots, \gamma_{m-1} \rangle}(\beta)$ such that for $\xi \in B_{\vec{\gamma}}(\beta)$, $s_{\langle \gamma_0, \cdots, \gamma_{m-1},\xi \rangle}^0=s_{\langle \gamma_0, \cdots, \gamma_{m-1} \rangle}(\beta)$.
Take $A^\prime= \triangle_{\vec{\gamma}}\cap_\beta B_{\vec{\gamma}}(\beta)$ and let $s_{\langle \gamma_0, \cdots, \gamma_{m-1} \rangle}$ be such that  for all $\beta<\circ(\alpha)$, $s_{\langle \gamma_0, \cdots, \gamma_{m-1} \rangle} \leq^* s_{\langle \gamma_0, \cdots, \gamma_{m-1} \rangle}(\beta)$ and intersect all measure-one sets with $A^\prime$.
Set $p^\prime= \langle \alpha,A^\prime, F \rangle$.
We see that for each $\vec{\gamma}=\langle \gamma_0, \cdots, \gamma_{m-1} \rangle$, $s_{\vec{\gamma}}$ is $\leq^*$-stronger than $\stem(p^\prime+ \vec{\gamma})$.
Apply our induction hypothesis with $p^\prime$ and $s_{\gamma_0,\cdots, \gamma_{m-1}}$ for all $\vec{\gamma}$ to get $p^{\prime \prime} \leq^*_\alpha p^\prime$ such that $\stem(p^{\prime \prime} +\vec{\gamma}) \leq^* s_{\vec{\gamma}}$ for $\vec{\gamma}$ of length $m$.
Let $A^*=A^{\prime \prime}$ and for $\xi \in A^*$, let $F^*(\xi)_1$ be a name which is forced to be $\leq^*$-stronger than $F^{\prime \prime}(\xi)_1$ and $s_{\vec{\gamma}^\frown \langle \xi \rangle}^1$ for all  $\vec{\gamma}$ in $\{\vec{\gamma} \mid |\vec{\gamma}|=m$ and $\max(\vec{\gamma})<\xi\}$.
This is possible since the number of such $\vec{\gamma}$ is $\xi$ and by Proposition \ref{theproperties} (\ref{nameclosed}).
Let $p^*=\langle \alpha,A^*, F^* \rangle$.
One can then check that $p^*$ is as required.

\end{proof}

\begin{prop}
\label{mainprikry}
$P_\alpha$ has the Prikry property.

\end{prop}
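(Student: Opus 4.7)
Proceed by induction on $\alpha$; the case $\circ(\alpha)=0$ is already handled in the preamble to this section, so assume $\circ(\alpha)>0$. Fix $p\in P_\alpha$ and a statement $\sigma$. Using the factorization in Proposition \ref{theproperties}(\ref{factor2}), we may assume $p=\langle\alpha,A,F\rangle$ is pure: the stem of any impure condition sits in a sub-forcing of size $<\alpha$ that has the Prikry property by induction, and deciding $\sigma$ reduces to deciding in the top part.

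Let $D=\{q\in P_\alpha : q\text{ decides }\sigma\}$, a dense open set. Apply Lemma \ref{maximaltop} to $D$ to obtain $p'=\langle\alpha,A',F'\rangle\leq^*_\alpha p$ with the maximality property: for every stem $s$, if any extension of $p'$ with stem $s$ lies in $D$, then the canonical maximal such extension --- namely $p'+\vec{\gamma}$ where $s=\stem(p'+\vec{\gamma})$ --- is already in $D$. This property persists under further $\leq^*$-refinement (since a $\leq^*$-refinement strengthens both the measure-one set and the $F$-values). Hence, for any direct extension of $p'$, whether it belongs to $D$ is essentially a property of the extending tuple $\vec{\gamma}\in[A']^{<\omega}$ alone.

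Define a coloring $c:[A']^{<\omega}\to\{+,-,?\}$ by $c(\vec{\gamma})=+,\,-,\,?$ according as $p'+\vec{\gamma}$ forces $\sigma$, forces $\neg\sigma$, or does not decide. Using the normality and coherence of $\vec{U}(\alpha)$, iterate diagonal intersections separately for each length $n<\omega$ (using the $\alpha$-closure of $\leq^*$ on pure conditions from Corollary \ref{closure} to diagonalize over $n$, and Lemma \ref{pigeonhole} to merge the stem-wise refinements of $F'$-values coming from different $\vec{\gamma}$'s) to obtain $p^*=\langle\alpha,A^*,F^*\rangle\leq^*_\alpha p'$ such that for every $n$, $c\restriction[A^*]^n$ depends only on the Mitchell-order profile $(\circ(\gamma_0),\ldots,\circ(\gamma_{n-1}))$ of $\vec{\gamma}$.

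To conclude, I rule out the simultaneous presence of $+$ and $-$ colors. Suppose $c(\vec{\gamma}_1)=+$ and $c(\vec{\gamma}_2)=-$. Using the abundance of ordinals of every Mitchell order in $A^*$ (Assumption \ref{initialassumpt1}(3)), pick $\vec{\gamma}_2'\in[A^*]^{|\vec{\gamma}_2|}$ of the same profile as $\vec{\gamma}_2$ but with all entries strictly above $\max\vec{\gamma}_1$ and with Mitchell orders strictly exceeding those appearing in $\vec{\gamma}_1$. By homogeneity $c(\vec{\gamma}_2')=-$, while $\vec{\gamma}_3:=\vec{\gamma}_1{}^\frown\vec{\gamma}_2'$ produces a condition $p^*+\vec{\gamma}_3$ that respects the triple/$5$-tuple dichotomy of Definition \ref{forcingwithtop}(\ref{technicalsplit}) in a way that preserves the peak-structure of each sub-tuple, so that $p^*+\vec{\gamma}_3\leq p^*+\vec{\gamma}_1$ and $p^*+\vec{\gamma}_3\leq p^*+\vec{\gamma}_2'$ simultaneously, forcing the impossible $\sigma\wedge\neg\sigma$. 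Hence, without loss, the color $-$ never appears. For any $q\leq p^*$ in $D$, after a $\leq^*$-refinement (permissible since $D$ is open) its stem matches $\stem(p^*+\vec{\gamma}_q)$ for some $\vec{\gamma}_q$; by maximality $p^*+\vec{\gamma}_q\in D$, so $c(\vec{\gamma}_q)\in\{+,-\}=\{+\}$, and hence $q\Vdash\sigma$. By density of $D$, $p^*\Vdash\sigma$, completing the proof. The main obstacle is the combinability step: verifying that with $\vec{\gamma}_2'$ placed strictly above $\vec{\gamma}_1$ in both ordinal and Mitchell order, the merged stem really yields $p^*+\vec{\gamma}_3\leq p^*+\vec{\gamma}_1,\,p^*+\vec{\gamma}_2'$ under the intricate extension relation of Definition \ref{extrel}, which requires checking that both sub-stems' $5$-tuple indices are preserved and that their measure-one restrictions are compatible with $A^*$'s splits.
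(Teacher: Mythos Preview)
Your proposal has two genuine gaps, and both stem from treating $P_\alpha$ as if it were an ordinary Magidor--Radin forcing where a condition is determined by its stem ordinals and its top measure-one set.

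\textbf{Gap 1: the coloring $c(\vec\gamma)$ is too coarse.} A condition $q\le p^*$ with $l(q)=n$ is \emph{not} determined by the tuple $\vec\gamma_q$ of stem ordinals: each $5$-tuple in $\stem(q)$ carries a name $\dot q_k$ of a condition in a quotient forcing, and by Definition~\ref{extrel} this name can be a genuine $\le$-extension (not merely $\le^*$-extension) of $F^*(\alpha_k)_1$. The paper makes this explicit right after Definition~\ref{extrel}: $q\le p^*$ does \emph{not} imply $q\le^* p^*+\vec\gamma$. So your claim that ``after a $\le^*$-refinement its stem matches $\stem(p^*+\vec\gamma_q)$'' is false, and Lemma~\ref{maximaltop} does not help: it gives maximality for a \emph{fixed stem} $s$, not for a fixed tuple of ordinals. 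It is entirely possible that $q\in D$ while $c(\vec\gamma_q)={?}$, so the inference ``$c(\vec\gamma_q)\in\{+,-\}=\{+\}$'' fails. The paper's proof handles exactly this by invoking the Prikry property inductively on the stem forcings: for each $\vec\gamma{}^\frown\langle\xi\rangle$ it finds a $\le^*$-extension of the stem deciding the statement ``there exists $s$ in the generic for the stem forcing such that $s^\frown\langle\alpha,A\setminus\tp(s),\dots\rangle\parallel\sigma$'', and only then homogenizes.

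\textbf{Gap 2: the combinability step is inconsistent.} You ask for $\vec\gamma_2'$ with the \emph{same Mitchell-order profile} as $\vec\gamma_2$ yet with all Mitchell orders strictly exceeding those in $\vec\gamma_1$; these requirements are in general incompatible. And if you drop the second requirement, the merge fails: if $\circ(\gamma_0)=5$ in $\vec\gamma_1$ and $\circ(\gamma_1)=2$ in $\vec\gamma_2'$ with $\gamma_1>\gamma_0$, then in $p^*+\vec\gamma_2'$ the entry at $\gamma_1$ is a $5$-tuple (it is first), whereas in $p^*+(\vec\gamma_1{}^\frown\vec\gamma_2')$ the entry at $\gamma_1$ is a triple. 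Definition~\ref{extrel} requires the tuple types at matched indices to agree, so $p^*+\vec\gamma_3\not\le p^*+\vec\gamma_2'$.

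The paper avoids both issues by a step-reduction argument rather than a Rowbottom-type coloring: after the inductive Prikry-lemma work and Lemma~\ref{pigeonhole}, one takes a minimal-length counterexample $\bar p\le p^*$ deciding $\sigma$ and shows that the decision already propagates to a one-step-shorter extension, using that the relevant decision was made uniform over the top measure-one set.
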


\begin{proof}

We first deal with the case $p$ is pure.
The impure case will be done similarly, with just a small additional work in the beginning.

Let $p=\langle \alpha,A,F \rangle$.
Assume that $\min(A)>\circ(\alpha)$.
By Lemma \ref{maximaltop}, we may extend $p$ so that if $s$ is a stem and $s^\frown \langle \alpha,B,H \rangle \leq p$ decides $\sigma$, then $s^\frown \langle \alpha, A \setminus \tp(s), F \restriction \{ \gamma \in A \setminus \tp(s) \mid \circ(\gamma)>\circ(s)\} \rangle$ decides $\sigma$.
Fix a finite increasing sequence $\langle \gamma_0, \cdots, \gamma_{n-1} \rangle \in [A]^n$.
Let $\dot{\nu}_{\vec{\gamma}}=F(\gamma_{n-1})_0$ (Recall that $F(\gamma_{n-1})=(\dot{\nu},\dot{q})$ where $\dot{q}$ is a $P_\gamma$-name of a condition in $P_{\dot{\nu}_{\vec{\gamma}}}/P_{\gamma_{n-1}}$).
For $\xi \in A$, $\xi>\bar{\nu}_{\vec{\gamma}}$, let $\dot{\nu}_\xi=F(\xi)_0$,  $\dot{G}$ be the canonical name of a generic over $P_\xi$ if $\circ(\xi) \leq \max_i \circ(\gamma_i)$, and generic over $P_\xi * P_{\dot{\nu}_\xi}/P_\xi$ otherwise.
Let $s_{\vec{\gamma}}=\stem(p+\vec{\gamma})$ and $s_\xi=(\stem(p+ (\vec{\gamma}^\frown \langle \xi\rangle)))_n$.

By the Prikry property below $\alpha$, let $s^0_{\vec{\gamma},\xi}{}^\frown s^1_{\vec{\gamma},\xi} \leq^* s_{\vec{\gamma}} {}^\frown s_\xi$ be such that $s^0_{\vec{\gamma},\xi}{}^\frown s^1_{\vec{\gamma},\xi}$ decides

\begin{center}

$\exists s \in \dot{G} (s^\frown \langle \alpha, A \setminus \tp(s), F \restriction \{ \gamma \in A\setminus \tp(s) \mid \circ(\gamma)>\circ(s)\} \rangle \parallel \sigma$).

\end{center}

For $\beta<\circ(\alpha)$, let $A_{\vec{\gamma}}(\beta) \in U(\alpha,\beta)$ be such that for each $\xi \in A_{\vec{\gamma}}(\beta)$, $s^0_{\vec{\gamma},\xi}{}^\frown s^1_{\vec{\gamma},\xi}$ gives the same decision, i.e. either for all $\xi \in A_{\vec{\gamma}}(\beta)$,

\begin{center}

$s^0_{\vec{\gamma},\xi}{}^\frown s^1_{\vec{\gamma},\xi} \Vdash \exists s \in \dot{G} (s^\frown \langle \alpha, A \setminus \tp(s), F \restriction \{ \gamma \in A\setminus \tp(s) \mid \circ(\gamma)>\circ(s)\} \rangle \parallel \sigma$),

\end{center}

or for all $\xi \in A_{\vec{\gamma}}(\beta)$

\begin{center}

$s^0_{\vec{\gamma},\xi}{}^\frown s^1_{\vec{\gamma},\xi} \Vdash  \nexists s \in \dot{G} (s^\frown \langle \alpha, A \setminus \tp(s), F \restriction \{ \gamma \in A\setminus \tp(s) \mid \circ(\gamma)>\circ(s)\} \rangle \parallel \sigma$),

\end{center}

If the decision is positive, shrink the set further so that either for all $\xi$,

\begin{center}

 $s^0_{\vec{\gamma},\xi}{}^\frown s^1_{\vec{\gamma},\xi} \Vdash \exists s \in \dot{G} (s^\frown \langle \alpha, A \setminus\tp(s),F \restriction \{ \gamma \in A \setminus \tp(s) \mid \circ(\gamma)>\circ(s)\} \rangle \Vdash \sigma)$,
 
 \end{center}
 
 or for all $\xi$,
 
 \begin{center}

 $s^0_{\vec{\gamma},\xi}{}^\frown s^1_{\vec{\gamma},\xi} \Vdash \exists s \in \dot{G} (s^\frown \langle \alpha, A \setminus\tp(s),F \restriction \{ \gamma \in A \setminus \tp(s) \mid \circ(\gamma)>\circ(s)\} \rangle \Vdash \neg \sigma)$.
 
 \end{center}
 
Let $A^\prime=\triangle_{\vec{\gamma}} \cap_\beta A_{\vec{\gamma}}(\beta)$.
Use Lemma \ref{pigeonhole} with $p=\langle \alpha, A^\prime,F \rangle$ and $s_{\vec{\gamma},\xi}^0 {}^\frown s_{\vec{\gamma},\xi}^1$ to obtain $p^n \leq^*_\alpha p$ such that $\stem(p^n+ (\vec{\gamma}^\frown \langle \xi \rangle)) \leq^* s_{\vec{\gamma},\xi}^0 {}^\frown s_{\vec{\gamma},\xi}^1$.
We now take $p^*$ be such that $p^* \leq^*_\alpha p^n$ for all $n$.

Suppose for a contradiction that there is no direct extension of $p^*$ deciding $\sigma$.
Let $\bar{p} \leq p^*$ with the minimal number of steps such that $\bar{p} \parallel \sigma$.
Assume $\bar{p} \Vdash \sigma$.
Say $\bar{p} \leq p^*$, $l(\bar{p})=n+1$, $\bar{p} \leq p^*+(\vec{\gamma}^\frown \langle \xi \rangle)$ where $|\vec{\gamma}|=n$.
Let $s=\stem(\bar{p})$, $s_0=s \restriction n$ and $s_1=s_n$, so that $s=s_0^\frown s_1$. 
We note that by maximality, $s^\frown \langle \alpha, A \setminus \tp(s), F \restriction \{ \gamma \in A \setminus \tp(s) \mid \circ(\gamma)>\circ(s)\} \rangle \Vdash \sigma$.

\begin{claim}
\label{prikryclaim1}
 $\stem(p^*+ (\vec{\gamma}^\frown \langle\xi \rangle))$ forces that there is $t \in \dot{G}$, $t^\frown \langle \alpha, A^* \setminus \tp(t), F^* \setminus \tp(t) \rangle \Vdash \sigma$.
 
 \end{claim}
 
 \begin{claimproof}{(Claim \ref{prikryclaim1})}
 
Suppose not.
For simplicity, assume that  $\stem(p^*+ (\vec{\gamma}^\frown \langle\xi \rangle))$ forces that there is $t \in \dot{G}$, $t^\frown \langle \alpha, A^* \setminus \tp(t), F^* \setminus \tp(t) \rangle \Vdash \neg \sigma$ (the other case can be done similarly).
Let $G$ be generic containing $\stem{\bar{p}}$, then it contains $\stem(p^*+(\vec{\gamma}^\frown \langle \xi \rangle))$.
Let $s^\prime \in G$ be such that $s^\prime {}^\frown \langle\alpha, A \setminus \tp(s^\prime), F\restriction \{ \gamma \in A \setminus \tp(s^\prime) \mid \circ(\gamma)>\circ(s^\prime)\} \rangle \Vdash \neg \sigma$ (note that $\tp(s^\prime)=\tp(s)$).
We may assume that $s^\prime \leq s$, but then $s^\prime {}^\frown \langle \alpha,A \setminus \tp(s^\prime),F \restriction \{ \gamma \in A \setminus \tp(s^\prime) \mid \circ(\gamma)>\circ(s^\prime)\} \rangle \Vdash \sigma$ and $\neg \sigma$, which is a contradiction.

 \end{claimproof}{(Claim \ref{prikryclaim1})}
 
With a similar proof as in Claim \ref{prikryclaim1}, we can conclude that $p^*+(\vec{\gamma}^\frown \langle \xi \rangle) \Vdash \sigma$, and hence $s_0^\frown \langle \xi,A^* \restriction \xi, F^* \restriction (A^* \restriction \xi), F^*(\xi)_0, F^*(\xi)_1 \rangle^\frown \langle \alpha, A^* \setminus \bar{\nu}_\xi, F^* \restriction \{\tau \mid \tau> \bar{\nu}_\xi$ and $\circ(\tau)>\circ(\xi) \} \rangle \Vdash \sigma$.
This implies that for each $\xi^\prime \in A^*$ with $\circ(\xi^\prime)=\beta$, we have that $s_0^\frown \langle \xi^\prime,A^* \restriction \xi^\prime, F^* \restriction(A^* \restriction \xi^\prime), F^*(\xi^\prime)_0, F^*(\xi^\prime)_1 \rangle^\frown \langle \alpha, A^* \setminus \bar{\nu}_{\xi^\prime}, F^* \restriction \{\tau \mid \tau>\bar{\nu}$ and $\circ(\tau)>\circ(\xi^\prime)\} \rangle \Vdash \sigma$.
Since every extension of $s_0^\frown \langle \alpha,A^* \setminus \tp(s), F^* \restriction \{ \gamma \in A^* \setminus \tp(s) \mid \circ(\gamma)>\circ(s)\} \rangle$ can be extended further to be an extension of $s_0^\frown \langle \alpha,A^* \setminus \tp(s), F^* \restriction \{ \gamma \in A^* \setminus \tp(s) \mid \circ(\gamma)>\circ(s)\} \rangle+\langle \xi^\prime \rangle$ for some $\xi^\prime$ with $\circ(\xi^\prime)=\beta$, and the further extension forces $\sigma$.
By density, $s_0 {}^\frown \langle \alpha, A^* \setminus \tp(s), F^* \restriction \{ \gamma \in A^* \setminus \tp(s) \mid \circ(\gamma)>\circ(s)\} \rangle \Vdash \sigma$, contradicting the minimality of the number of steps used in extension from $p^*$ to $\bar{p}$.

The proof for an impure case is the following: let $s^\frown \langle \alpha,A,F \rangle \in P_\alpha$.
Let $Q$ be the forcing $s$ is living in.
Note that cardinal arithmetic in $V$ is the same as in $V[G]$.
Let $G$ be $Q$-generic, then work in $P_\alpha[G]$ with $(\langle \alpha,A,F \rangle)[G]$ in a similarly to before to get $\langle \alpha,B,H \rangle$ deciding $\sigma[G]$ (note that Proposition \ref{measureoneinforcing} is necessary here, since when we split a measure-one set $A \in \vec{U}(\alpha)$ into disjoint sets, and one of them is of measure-one, then that set may have measure one in the ultrafilters in the extension, however, we can shrink the set to have measure one with respect to ultrafilters in the ground model).
Back in $V$, by the Prikry property of $Q$, let $s^* \leq^* s$ such that $s^* \Vdash_Q \langle \alpha,\dot{B},\dot{H} \rangle \Vdash_{P_\alpha/Q} \sigma^i$ for some $i=0,1$, where $\sigma^0=\sigma$ and $\sigma^1=\neg \sigma$.
By Proposition \ref{measureinground}, we can shrink $\dot{B}$ to $B \in V$ such that $\Vdash_Q \check{B} \subseteq \dot{B}$.
Then find $H$ such that for all $\gamma$, $H(\gamma)_1$ is forced to be $\leq^*$-stronger than $\dot{H}(\gamma)_1$,
Hence, $s^* {}^\frown \langle \alpha,B,H \rangle$ decides $\sigma$ as required.

\end{proof}

With a similar proof of the proof for the Prikry property, we obtain the following.

\begin{prop}

Let $\beta,\dot{\nu}$, and $Q$ be as in Definition \ref{quotient}, then $\Vdash_Q (P_\alpha/Q,\leq,\leq^*)$ has the Prikry property.

\end{prop}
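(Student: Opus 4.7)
The plan is to repeat the proof of Proposition \ref{mainprikry} inside the generic extension $V[G]$, where $G$ is $Q$-generic, working with $P_\alpha[G]$ in place of $P_\alpha$. The first step is to verify that the two preparatory lemmas, Lemma \ref{maximaltop} and Lemma \ref{pigeonhole}, have direct analogues in $V[G]$ for $P_\alpha[G]$. Their proofs only use: (i) the strong closure of the direct-extension order on pure conditions, which holds in $V[G]$ by Proposition \ref{theproperties}(\ref{ccandclosed}); (ii) the $\alpha^{++}$-c.c. bound on the number of nice names living below $\alpha$, which transfers since $Q$ preserves cardinals and GCH; (iii) the inductive Prikry property for $P_\xi/Q'$ for $\xi<\alpha$, guaranteed by Proposition \ref{theproperties}(\ref{prikry}); and (iv) the integration lemma (Proposition \ref{integrate}) applied to $\vec{U}(\xi)$ for $\xi \in A$. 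For (iv) one invokes Proposition \ref{measureoneinforcing}: each $U(\xi,\beta)$ extends to a coherent sequence in $V[G]$ in which $U(\xi,\beta)$ is dense, so the combinatorics of measure-one sets are inherited.

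Second, for a pure condition $p=\langle \alpha,A,F\rangle$ in $P_\alpha[G]$ and a formula $\sigma$ in the forcing language of $P_\alpha/Q$, I would apply the $V[G]$-analogue of Lemma \ref{maximaltop} to obtain $p'=\langle\alpha,A',F'\rangle \leq^* p$ with the maximality property that any impure $s^\frown\langle\alpha,B,H\rangle\leq p'$ deciding $\sigma$ forces the decision via $s^\frown\langle\alpha,A'\setminus\tp(s),F'\restriction\{\gamma\in A'\setminus\tp(s)\mid \circ(\gamma)>\circ(s)\}\rangle$. Then, for each $n$ and each $\vec{\gamma}\in[A']^n$, use the inductive Prikry property for the forcing carrying the $n$-th piece of the stem to decide the existential statement ``there exists $s$ in the canonical generic such that $s^\frown\langle\alpha,\ldots\rangle$ decides $\sigma$.'' Homogenize these decisions along the top-coordinate $\xi$ using normality and the ordinal $\circ(\alpha)$ many components of $\vec{U}(\alpha)$, then apply the $V[G]$-analogue of Lemma \ref{pigeonhole} to assemble a uniform direct extension $p^n\leq^* p'$. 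Taking a $\leq^*$-lower bound $p^*\leq^* p^n$ for all $n<\omega$ via the pure-condition closure yields a direct extension good at all finite lengths.

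Third, assume for contradiction that no direct extension of $p^*$ decides $\sigma$, and pick a minimal-length extension $\bar p\leq p^*$ with $\bar p\Vdash\sigma$ (the $\neg\sigma$ case is symmetric). Run exactly the density argument of Claim \ref{prikryclaim1} and the paragraph that follows it: by the maximality from Lemma \ref{maximaltop}, the stem of $\bar p$ together with the top entry of $p^*$ already forces $\sigma$; uniformity on the Mitchell order of the added ordinal $\xi$ propagates the decision to every $\xi'\in A^*$ with $\circ(\xi')=\circ(\xi)$; and any extension of $\stem(\bar p)\restriction(l(\bar p)-1)\!{}^\frown\langle\alpha,A^*\setminus\tp,F^*\restriction\cdots\rangle$ can be refined to one of this form, so by density this shorter condition already decides $\sigma$, contradicting the minimal length. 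This yields a direct extension of $p$ in $P_\alpha[G]$ deciding $\sigma$, and since $p\in P_\alpha[G]$ and $\sigma$ were arbitrary, $\Vdash_Q P_\alpha/Q$ has the Prikry property on pure conditions.

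For the impure case one works in $V$: write the condition as $s^\frown\langle\alpha,A,F\rangle$ where $s$ lies in some two-step iteration $Q'$ over $Q$. Force with $Q\ast Q'$, apply the pure argument above to obtain $\langle\alpha,\dot B,\dot H\rangle$ deciding $\sigma$ by some $\sigma^i$, then use the Prikry property of $Q'$ to thin $s$ to $s^*\leq^* s$ forcing $\langle\alpha,\dot B,\dot H\rangle\Vdash \sigma^i$, and finally apply Proposition \ref{measureinground} to replace $\dot B$ by a ground-model $B$ and to pick $H(\gamma)_1$ pointwise $\leq^*$-below $\dot H(\gamma)_1$ using Proposition \ref{theproperties}(\ref{nameclosed}). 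The main obstacle, I expect, is the careful bookkeeping of which quotient forcing carries each piece of the stem at each stage, so that the inductive Prikry property below $\alpha$ is applied to the correct forcing $P_\xi/Q''$ rather than to $P_\xi$ itself; once that is tracked, every step is a direct transcription of the proof of Proposition \ref{mainprikry}.
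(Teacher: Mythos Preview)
Your proposal is correct and follows essentially the same approach as the paper: the paper's proof simply says to work in $V[G]$ for $G$ $Q$-generic and ``proceed as in Proposition \ref{mainprikry}'' for pure conditions, and to follow the impure-condition argument of that proposition otherwise. Your write-up spells out in detail the points the paper leaves implicit (transferring Lemmas \ref{maximaltop} and \ref{pigeonhole}, invoking Proposition \ref{measureoneinforcing} for measure-one sets in $V[G]$, and using Proposition \ref{measureinground} to return to ground-model sets), and you correctly note the case split on whether pure conditions exist in $P_\alpha[G]$.
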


\begin{proof}

First, suppose that pure conditions in $P_\alpha/Q$ are defined.
Let $G$ be $Q$-generic and in $V[G]$, let $p \in P_\alpha[G]$ is pure.
Then proceed as in Proposition \ref{mainprikry}.
Now, suppose either impure conditions in $P_\alpha/Q$ are defined, and in $V[G]$, $p \in P_\alpha[G]$ is impure, or impure conditions in $P_\alpha/Q$ are not defined and $p \in P_\alpha[G]$ be any conditions.
Then $p=s^\frown \langle \alpha,A,F \rangle$.
Proceed the proof of the Prikry property as in Proposition \ref{mainforcing} for impure conditions.

\end{proof}

\section{The cardinal behavior in $P_\alpha$}
\label{cardinalbehave}
By a chain condition of $P_\alpha$, cardinals above $\alpha^+$, exclusively, are preserved, and the forcing preserves cofinalities of regular cardinals above $\alpha^+$
We first show that cardinals up to, but not including $\alpha$, are preserved.

\begin{prop}
\label{presbelow}
All cardinals up to and including $\alpha$, are preserved.

\end{prop}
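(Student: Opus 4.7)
The chain condition from Proposition \ref{cc} takes care of cardinals above $\alpha^+$, so the work is with $V$-cardinals $\lambda \leq \alpha$. Suppose for contradiction that some such $\lambda$ is collapsed: there are $\mu < \lambda$, a condition $p \in P_\alpha$, and a name $\dot{f}$ with $p \Vdash_\alpha \dot{f} : \check{\mu} \to \check{\lambda}$ is a surjection. The plan is to find an extension of $p$ forcing $\dot{f}$ not to be surjective, by combining the factorization of $P_\alpha$ with the $\alpha$-closure of its pure part and the Prikry property.

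First I reduce to a pure condition by factorization. If $p = s^\frown \langle \alpha, A, F\rangle$ is impure, let $Q$ be $P_\beta$ or $P_\beta * P_{\dot\gamma}/P_\beta$ (with $\beta < \alpha$ the largest index appearing in $s$) so that $s \in Q$. By the inductive hypothesis Proposition \ref{theproperties}, $Q$ preserves all cardinals and $|Q| < \alpha$; hence in any $Q$-generic extension $V[G]$ with $s \in G$, $\lambda$ remains a cardinal, and $\alpha$ remains inaccessible since $|Q| < \alpha$ and $\alpha$ is inaccessible in $V$. By the factorization in Proposition \ref{theproperties}(\ref{factor1})--(\ref{factor2}), the image $p[G] = \langle \alpha, A, F[G]\rangle$ is then a pure condition of $P_\alpha[G]$; the case of an already pure $p$ is handled by taking $Q$ to be the trivial forcing.

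Next I decide $\dot{f}[G]$ in $V[G]$. By the Prikry property for $P_\alpha/Q$ established in Section \ref{theprikryproperty}, every pure condition of $P_\alpha[G]$ admits a pure direct extension deciding $\dot{f}[G](\check{\xi})$ for any given $\xi < \mu$. The pure part of $P_\alpha[G]$ is $\alpha$-closed, by the analogue of Corollary \ref{closure} for the quotient --- essentially the proof of Proposition \ref{strongclosure} relativized to $V[G]$. Since $\mu < \lambda \leq \alpha$, I recursively construct a $\leq^*_{\alpha/Q}$-decreasing sequence $\langle p_\xi \mid \xi < \mu\rangle$ of pure conditions below $p[G]$ with each $p_\xi$ deciding $\dot{f}[G](\check{\xi})$, taking lower bounds at limits; let $p^*$ be a common lower bound. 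Then $p^*$ forces $\dot{f}[G]$ to coincide with a fixed function $f \in V[G]$, $f : \mu \to \lambda$.

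Since $\lambda$ is a cardinal in $V[G]$ and $\mu < \lambda$, $f$ cannot be surjective onto $\lambda$, so $p^*$ forces $\dot{f}[G]$ non-surjective. Pulling back through the factorization yields an extension of $p$ in $P_\alpha$ forcing $\dot{f}$ non-surjective, contradicting the choice of $p$. The main technical point is that the $\alpha$-closure of the pure part survives the factorization through $Q$; this is built into the inductive statement Proposition \ref{theproperties}(\ref{factor1}), so no new closure argument is needed beyond what Section \ref{forcingPalpha} has already supplied. The delicate case to keep in mind is $\lambda = \alpha$, where one specifically uses the inaccessibility of $\alpha$ in $V[G]$ to rule out a cofinal map $\mu \to \alpha$ lying in $V[G]$.
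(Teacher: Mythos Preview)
Your argument is correct and follows essentially the same route as the paper: factor through the stem forcing $Q$, use the inductive hypothesis that $Q$ preserves cardinals, and then exploit the Prikry property together with the $\alpha$-closure of the pure part of $P_\alpha[G]$ to see that no new surjection $\mu \to \lambda$ can appear. The paper's proof is terser---it simply invokes ``Prikry property plus $\alpha$-closed $\leq^*$ on pure conditions implies no new bounded functions'' without spelling out the decreasing sequence, and handles $\lambda=\alpha$ by noting that $\alpha$, being a strong limit with all smaller cardinals preserved, must itself remain a cardinal---but the substance is the same.
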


\begin{proof}

We first show that cardinals below $\alpha$ are preserved.
Consider each cardinal $\gamma<\alpha$ and $p \in P_\alpha$.
Set $p=s^\frown \langle \alpha,A,F \rangle$.
Let $G$ be generic containing $s$.
Since it is from the forcing below $\alpha$, by induction, $\gamma$ is a cardinal in $V[G]$.
Let $\bar{p}=(\langle \alpha,A,F \rangle)[G]$.
By the Prikry property and the fact that the direct extension relation for pure conditions in $P_\alpha[G]$ is $\alpha$-closed for arbitrary $G$, we get that $p \Vdash_\alpha \gamma$ is preserved.
Finally, note that since $\alpha$ is strongly inaccessible in $V$, $\alpha$ is preserved in the extension.

\end{proof}

\begin{prop}
\label{GCHbelow}
GCH below $\alpha$ holds in $V^{P_\alpha}$.

\end{prop}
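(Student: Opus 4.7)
The plan is to fix a cardinal $\gamma < \alpha$ (still a cardinal in the generic extension by Proposition \ref{presbelow}) and show $(2^\gamma)^{V^{P_\alpha}} \leq \gamma^+$ by factoring $P_\alpha$ through an appropriately-chosen inaccessible below $\alpha$. If $\alpha = \min(X)$ the forcing is essentially trivial and there is nothing to prove, so assume $\alpha > \min(X)$ and choose $\beta \in X \cap \alpha$ with $\min(X \setminus (\beta+1)) > \gamma$; concretely, take $\beta := \max(X \cap (\gamma+1))$ if this set is nonempty, and $\beta := \min(X)$ otherwise. By Proposition \ref{theproperties}(\ref{factor1})--(\ref{factor2}), I factor $P_\alpha \cong P_\beta * P_\alpha/P_\beta$ and analyze the two pieces separately.

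By the inductive content of Proposition \ref{theproperties}(\ref{prikry}) at level $\beta < \alpha$, $P_\beta$ preserves all cardinals and GCH, so $(2^\gamma)^{V^{P_\beta}} = \gamma^+$. For the quotient, I claim that $P_\alpha/P_\beta$ adds no new subsets of $\gamma$. Every condition in $P_\alpha[G_\beta]$ is either pure or impure with its stem's first coordinate strictly above $\beta$, because any portion of the original stem living in $P_\beta$ is absorbed into $G_\beta$. Consequently, by reproducing the argument behind Corollary \ref{closure} inside $V^{P_\beta}$, the direct extension relation $\leq^*_{\alpha/\beta}$ is $\min(X \setminus (\beta+1))$-closed there, and in particular $\gamma^+$-closed by our choice of $\beta$.

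Combining the closure with the Prikry property for $P_\alpha/P_\beta$, the standard Prikry-closure construction yields subset-preservation: given a name $\dot{f}$ for a function $\gamma \to 2$ and a condition $p$, I build a $\leq^*_{\alpha/\beta}$-decreasing sequence $\langle p_\delta : \delta < \gamma \rangle$ below $p$ with each $p_\delta$ deciding $\dot{f}(\delta)$, using the Prikry property at successor stages and $\gamma^+$-closure at limit stages; a final direct-extension lower bound $p^*$ then decides $\dot{f}$ entirely. Hence $\dot{f}[G] \in V^{P_\beta}$, so $\mathcal{P}(\gamma)^{V^{P_\alpha}} = \mathcal{P}(\gamma)^{V^{P_\beta}}$, and the bound $2^\gamma \leq \gamma^+$ transfers.

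The main obstacle is verifying rigorously that $\leq^*_{\alpha/\beta}$ really inherits the stated closure in $V^{P_\beta}$, including the delicate handling of the name-valued fifth coordinates of impure stems. This is the quotient analogue of Proposition \ref{strongclosure}, and it is proved by replaying that argument inside $V^{P_\beta}$ and invoking the inductive content of Proposition \ref{theproperties} already granted at level $\alpha$.
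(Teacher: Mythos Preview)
Your overall strategy---factor, apply the inductive hypothesis on the first factor, then use the Prikry property plus closure on the quotient to conclude no new subsets of $\gamma$ are added---is exactly the paper's strategy. The difficulty is in your factorization step. You invoke $P_\alpha \cong P_\beta * P_\alpha/P_\beta$ for a \emph{fixed} $\beta$ chosen from $\gamma$ alone, but Proposition~\ref{theproperties}(\ref{factor2}) only supplies this equivalence below a condition $p$ for which $p \restriction P_\beta$ exists, i.e.\ whose stem contains a coordinate at level $\beta$. Such conditions are not dense in $P_\alpha$: a pure condition $\langle \alpha, A, F\rangle$ with $\beta \notin A$ can never be extended so that $\beta$ appears in its stem, since one-step extensions only draw from $A$. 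Hence the global equivalence you cite does not hold, and your argument as written does not cover all $P_\alpha$-generics.

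The paper's remedy is small but essential: rather than fixing $\beta$ in advance, work below an arbitrary condition $p = s^\frown \langle \alpha, A, F\rangle$ and factor through $P^s$, the forcing in which the \emph{full stem} $s$ lives (either $P_{\alpha_{n-1}}$ or $P_{\alpha_{n-1}} * P_{\dot{\nu}_{n-1}}/P_{\alpha_{n-1}}$). This factorization is automatically available below $p$, and $P^s$ is still a forcing strictly below level $\alpha$, so the inductive GCH applies. Moreover, after absorbing the entire stem into the generic, the image of $p$ in the quotient is the pure condition $\langle \alpha, A, F\rangle[G]$, below which $\leq^*$ is $\alpha$-closed rather than merely $\min(X\setminus(\beta+1))$-closed. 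This makes the Prikry-plus-closure step immediate and also dissolves the ``main obstacle'' you flag in your final paragraph, since there are no longer any impure stems left to handle in the quotient.
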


\begin{proof}

Let $\gamma<\alpha$.
Let $p=s^\frown \langle \alpha,A,F\rangle$.
Forcing with $p$ is factored into $P^s$, the forcing containing $s$, and $P_\alpha/P^s$.
Since $P^s$ is a forcing below $\alpha$,$\Vdash_{P^s} (P_\alpha/P^s,\leq,\leq^*)$ has the Prikry property, and $\Vdash (P_\alpha/P^s,\leq^*)$ is $\alpha$-closed, forcing with $p$ preserves GCH at $\gamma$.

\end{proof}

By Propositions \ref{presbelow} and \ref{GCHbelow}, 

\begin{coll}

$\alpha$ is preserved (as a cardinal).

\end{coll}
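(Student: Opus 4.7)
The plan is to assemble this corollary directly from Propositions \ref{presbelow} and \ref{GCHbelow}. First I would recall that $\alpha$ is strongly inaccessible in $V$ (Assumption \ref{initialassumpt1}), so in particular $\alpha$ is a limit cardinal. Hence in $V$ there is a cofinal set $S\subseteq\alpha$ consisting of $V$-cardinals (for instance, take $S$ to be the set of all cardinals of $V$ below $\alpha$, which is cofinal in $\alpha$ since $\alpha$ is a limit cardinal).

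Next I would invoke Proposition \ref{presbelow}, which tells us that every $V$-cardinal $\beta<\alpha$ remains a cardinal in $V^{P_\alpha}$. Applying this to each $\beta\in S$, the set $S$ consists of $V^{P_\alpha}$-cardinals as well. Consequently $\alpha=\sup(S)$ is a supremum of cardinals in $V^{P_\alpha}$. A standard elementary fact is that a supremum of cardinals is itself a cardinal: if $|\alpha|^{V^{P_\alpha}}=\gamma<\alpha$, then any $V^{P_\alpha}$-cardinal $\beta\in S$ with $\beta>\gamma$ would yield $|\beta|\leq\gamma<\beta$, contradicting that $\beta$ is a cardinal. Therefore $\alpha$ is a cardinal in $V^{P_\alpha}$.

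There is no real obstacle; the corollary is essentially a bookkeeping observation packaging what Proposition \ref{presbelow} already contains. One subtle point worth flagging in writing is that the chain condition and size bounds from Proposition \ref{cc} alone (namely $|P_\alpha|\leq\alpha^+$ and $\alpha^{++}$-c.c.) do \emph{not} by themselves preserve $\alpha$, so the argument must genuinely route through the preservation of cardinals strictly below $\alpha$ established in Proposition \ref{presbelow}. Proposition \ref{GCHbelow} is not logically needed for this particular corollary, but motivates it: together the two previous propositions show that both the cardinal structure and the power function below and at $\alpha$ are unaffected by $P_\alpha$.
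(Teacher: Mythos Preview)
Your argument is correct and matches the paper's reasoning: the paper simply cites Propositions~\ref{presbelow} and~\ref{GCHbelow} with no further proof, and indeed the last sentence of the proof of Proposition~\ref{presbelow} already spells out exactly your argument (that $\alpha$, being strongly inaccessible in $V$, remains a cardinal once all cardinals below it are preserved). Your observation that Proposition~\ref{GCHbelow} is not logically required for this particular corollary is also accurate.
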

If $\circ(\alpha)=0$, $P_\alpha$ is equivalent to $P_\beta *P_{\dot{\nu}}/P_\beta$ for some $\beta$ and $\dot{\nu}$, which is forced to be below $\alpha$, hence, $\alpha$ is still inaccessible.
If $\circ(\alpha)>0$, the forcing $P_\alpha$ clearly projects down to a Prikry forcing or a Magidor forcing, adding a  club subset of $\alpha$ of order-type $\omega^{\circ(\alpha)}$ (the ordinal exponentiation).
If $\circ(\alpha)=0$, then obviously $\alpha^+$ is still regular in $V^{P_\alpha}$.

\begin{prop}

If $\circ(\alpha)>0$, then $\alpha^+$ is preserved and remains regular in $V[G_\alpha]$.

\end{prop}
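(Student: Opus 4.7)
The plan is to reduce the proposition to showing that $\theta := (\alpha^+)^V$ cannot be collapsed in $V[G_\alpha]$, since regularity then comes for free: every $V[G_\alpha]$-cardinal is a $V$-cardinal, so if $\theta$ survives as a cardinal then $\theta = (\alpha^+)^{V[G_\alpha]}$, a successor cardinal and hence regular in ZFC. Suppose toward contradiction that $\theta$ is collapsed; since $\alpha$ is preserved, $|\theta|^{V[G_\alpha]} = \alpha$, and so $\cf^{V[G_\alpha]}(\theta) = \mu$ for some regular $\mu \leq \alpha$. But Proposition \ref{theproperties}(\ref{club}) makes $\alpha$ singular of cofinality $\cf(\omega^{\circ(\alpha)}) < \alpha$ in $V[G_\alpha]$, and regularity of $\mu$ forces $\mu \neq \alpha$. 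So $\mu < \alpha$, and there is a name $\dot f$ and a condition $p \in P_\alpha$ with $p \Vdash \dot f : \check\mu \to \check\theta$ cofinal, $\mu$ a fixed ordinal of $V$.

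Next, I would factor $p = s^\frown \langle \alpha, A, F\rangle$, let $Q$ be the subforcing containing $s$ (so $|Q| \leq \alpha$ by Proposition \ref{theproperties}(\ref{ccandclosed})), and pass to $V[G^s]$ for some $Q$-generic $G^s \ni s$. Since $|Q| < \theta$, $Q$ preserves $\theta = (\alpha^+)^{V[G^s]}$. In $V[G^s]$, the quotient $P_\alpha[G^s]$ still has the Prikry property, its pure conditions are $\alpha$-closed under $\leq^*$ by Corollary \ref{closure}, and $\bar p := \langle \alpha, A, F\rangle[G^s]$ is a pure condition of $P_\alpha[G^s]$. The maximality Lemma \ref{maximaltop} transports to this setting by the same proof, with Proposition \ref{measureoneinforcing} used to shrink any quotient measure-one set back into $V$ whenever needed.

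The core step is a capturing claim: for each $\xi < \mu$ there are a pure $\bar p_\xi \leq^* \bar p$ and a set $Z_\xi \in V[G^s]$ with $|Z_\xi| \leq \alpha$ such that $\bar p_\xi \Vdash \dot f(\check \xi) \in \check Z_\xi$. I would apply the transported maximality lemma to $\bar p$ and the dense open set $D_\xi := \{ q : q \parallel \dot f(\check\xi)\}$, obtaining $\bar p_\xi = \langle \alpha, A_\xi, F_\xi\rangle$ such that whenever a condition $t^\frown \langle \alpha, B, H\rangle \leq \bar p_\xi$ decides $\dot f(\check\xi)$, the canonical extension $t^\frown \langle \alpha, A_\xi \setminus \tp(t), F_\xi \restriction \{\gamma \in A_\xi \setminus \tp(t) : \circ(\gamma) > \circ(t)\}\rangle$ also decides it, say as $\beta_t$. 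Set $Z_\xi := \{\beta_t : t$ a stem of an extension of $\bar p_\xi\}$. Each such stem lives in a subforcing of the form $P_\gamma[G^s]$ or $(P_\gamma * P_{\dot\nu}/P_\gamma)[G^s]$ with $\gamma < \alpha$, each of $V[G^s]$-cardinality $<\alpha$ by the inductive Proposition \ref{theproperties}(\ref{ccandclosed}); the union over $\gamma < \alpha$ still has size $\leq \alpha$ by inaccessibility, yielding $|Z_\xi|^{V[G^s]} \leq \alpha$.

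Finally, since $\mu < \alpha$, the $\alpha$-closure of pure conditions supplies $\bar p^* \leq^* \bar p_\xi$ simultaneously for every $\xi < \mu$, and then $\bar p^* \Vdash \rge(\dot f) \subseteq \bigcup_{\xi < \mu} \check Z_\xi$, a set of $V[G^s]$-cardinality at most $\mu \cdot \alpha = \alpha < \theta$, contradicting that $\dot f$ is cofinal in $\theta$. The main obstacle I anticipate is the capturing claim: one must verify cleanly both that the maximality lemma applies to $P_\alpha[G^s]$-names $\dot f$ in the quotient rather than names in the ground, and that the stems below $\bar p_\xi$ really give at most $\alpha$ many distinct values $\beta_t$; both points reduce to straightforward adaptations of the arguments already developed for the Prikry property in Section \ref{theprikryproperty} and cardinal preservation in Section \ref{cardinalbehave}.
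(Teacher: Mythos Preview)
Your proposal is correct and follows essentially the same approach as the paper: reduce to a cofinal map from some $\mu<\alpha$ into $(\alpha^+)^V$, apply the maximality Lemma~\ref{maximaltop} to trap each value of $\dot f$ among at most $\alpha$ many stem-determined candidates, and use the $\alpha$-closure of pure conditions to handle all $\xi<\mu$ simultaneously. The only notable difference is that you deduce regularity of $\alpha^+$ directly from its preservation as a cardinal (since $\alpha$ is preserved, $(\alpha^+)^V=(\alpha^+)^{V[G_\alpha]}$ is a successor cardinal), whereas the paper rederives it by repeating the cofinality argument; your shortcut is cleaner.
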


\begin{proof}

Since $0<\circ(\alpha)<\alpha$, in $V[G_\alpha]$, $\cf(\alpha)<\alpha$.
Suppose $\alpha^+$ is collapsed, then there is $\delta<\alpha$ and a cofinal map $f: \delta \to (\alpha^+)^V$.
Let $\dot{f}$ be a such name, forced by $p$.
Assume for simplicity that $p=\langle \alpha,A,F \rangle$ (if $p$ is not pure, then we may first force over the stem of $p$ and proceed as usual).
Since $\delta<\alpha$, we may find $p^\prime \leq_\alpha^* p$ such that  if $p^\prime=\langle \alpha, A^\prime, F^\prime \rangle$, then for all $\gamma<\delta$, if $s^\frown \langle \alpha, B,H \rangle \leq_\alpha p^\prime$ and $s^\frown \langle \alpha,B,H \rangle$ decides $\dot{f}(\gamma)$, then so is $s^\frown \langle \alpha, A^\prime \setminus \tp(s), F^\prime \restriction \{ \gamma \in A^\prime \setminus \tp(s) \mid \circ(\gamma)>\circ(s)\} \rangle$.
Let $X=\{\xi \mid \exists \gamma<\delta \exists \text{stem } s (s^\frown \langle \alpha, A^\prime \setminus \tp(s), F^\prime\restriction \{ \gamma \in A^\prime \setminus \tp(s) \mid \circ(\gamma)>\circ(s)\} \rangle \Vdash_\alpha \dot{f}(\check{\gamma})=\check{\xi})\}$.
Then $p^\prime \Vdash_\alpha \rge(\dot{f}) \subseteq \check{X}$ and $|X|\leq \alpha \otimes \delta=\alpha$.
Hence, $p^\prime \Vdash_\alpha \rge(\dot{f})$ is bounded in $(\alpha^+)^V$, which is a contradiction.

A similar proof shows that the cofinality of $\alpha^+$ is preserved, so $\alpha^+$ is regular.

\end{proof}

Since $|P_\alpha| \leq \alpha^{++}$, we have that GCH hold everywhere except possibly at $\alpha$.
The next proposition shows that GCH holds at $\alpha$.

\begin{prop}

GCH holds at $\alpha$.

\end{prop}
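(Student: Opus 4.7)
The plan is to show $(2^\alpha)^{V^{P_\alpha}} \leq \alpha^+$ by producing, in $V$, a family of at most $\alpha^+$ ``canonical'' names that capture every subset of $\alpha$ in the extension. By first factoring through the stem of an impure condition (which lives in a forcing of size $<\alpha$ and preserves GCH at $\alpha$ by induction together with the pure-case count that follows), I may restrict attention to pure conditions $p = \langle \alpha, A, F\rangle$.

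Given a $P_\alpha$-name $\dot X$ for a subset of $\alpha$ and a pure $p$, I will find a pure $p^* = \langle \alpha, A^*, F^*\rangle \leq^*_\alpha p$ such that for each stem $s$ and each $\beta < \tp(s)$, the canonical extension
\[
p^*(s) := s^\frown \langle \alpha, A^* \setminus \tp(s), F^* \restriction \{\gamma \in A^* \setminus \tp(s) \mid \circ(\gamma) > \circ(s)\}\rangle
\]
decides $\check\beta \in \dot X$. To build $p^*$: for each $\beta < \alpha$ apply Lemma \ref{maximaltop} to the dense open set $D_\beta = \{q \mid q \parallel \check\beta \in \dot X\}$, obtaining $p_\beta = \langle \alpha, A_\beta, F_\beta\rangle \leq^*_\alpha p$; then take the diagonal intersection $A^* = \triangle_{\beta < \alpha} A_\beta$, and for each $\xi \in A^*$ choose $F^*(\xi)_1$ as a name forced to be $\leq^*$-stronger than $F_\beta(\xi)_1$ simultaneously for all $\beta < \xi$, which is possible by Proposition \ref{theproperties}(\ref{nameclosed}) exactly as in Proposition \ref{strongclosure}. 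The defining property of $\triangle$ gives $A^* \setminus \tp(s) \subseteq A_\beta$ whenever $\tp(s) > \beta$, so $p^*(s)$ is a $\leq^*_\alpha$-extension of the analogous canonical extension derived from $p_\beta$; combined with the Prikry property this transfers the decision about $\check\beta \in \dot X$.

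Now, associate to each such reduced pair the function
\[
f_{p^*, \dot X}(s) = \{\beta < \tp(s) \mid p^*(s) \Vdash_\alpha \check\beta \in \dot X\}
\]
defined on stems $s$ of $P_\alpha$. Every stem is an element of $V_\alpha$, so there are at most $\alpha$ stems; each value lies in $\mathcal P(\alpha)^V$, of size $\alpha^+$ by GCH in $V$. Hence the number of such $f$'s is $(\alpha^+)^\alpha = \alpha^+$, and the number of pure conditions is at most $|P_\alpha| \leq \alpha^+$, giving at most $\alpha^+$ pairs $(p^*, f)$ in $V$. For any $P_\alpha$-generic $G$ and any $X \in \mathcal P(\alpha)^{V[G]}$, pick a name $\dot X$ with $\dot X[G] = X$, and by density choose $p^* \in G$ realizing the reduction; since the conditions $p^*(s)$ are dense below $p^*$, we recover $X = \bigcup\{f_{p^*, \dot X}(s) \mid p^*(s) \in G\}$. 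Thus $X$ is determined by $(p^*, f_{p^*, \dot X}) \in V$ together with $G$, and the count yields $|\mathcal P(\alpha)^{V[G]}| \leq \alpha^+$. Since $\alpha^+$ is preserved by the previous proposition, GCH holds at $\alpha$.

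The main obstacle is ensuring that the single $p^*$ produced by diagonalization simultaneously inherits the maximality property of Lemma \ref{maximaltop} for every $\beta < \alpha$: the diagonal intersection handles the $A$-coordinate, but one must also pointwise refine the $F$-coordinate using Proposition \ref{theproperties}(\ref{nameclosed}), and verify that canonical extensions from $p^*$ are $\leq^*_\alpha$-extensions of canonical extensions from $p_\beta$ whenever $\tp(s) > \beta$. Checking this compatibility of the two refinement operations (set and name) is the subtle step; everything else is the standard nice-names counting in the presence of the Prikry property.
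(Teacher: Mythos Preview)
Your overall strategy---use Lemma~\ref{maximaltop} to reduce decisions to canonical stem-indexed conditions and then count---is the same as the paper's. The paper organizes it as a pigeonhole argument: it builds a $\leq^*$-decreasing sequence $\vec p=\langle p_\xi\mid\xi<\alpha\rangle$ (one application of Lemma~\ref{maximaltop} per coordinate), observes that there are only $(\alpha^+)^\alpha=\alpha^+$ such sequences, and hence among $\alpha^{++}$ purportedly distinct names two must share the same $\vec p$ and the same reduced name.

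Your attempt to compress the sequence into a single $p^*$ by diagonal intersection has a real gap. You claim that for every stem $s$ with $\tp(s)>\beta$ the condition $p^*(s)$ decides $\check\beta\in\dot X$, via ``$p^*(s)\leq^*_\alpha p_\beta(s)$ plus Prikry.'' But Lemma~\ref{maximaltop} for $p_\beta$ only yields a decision at $p_\beta(s)$ when some $s^\frown\langle\alpha,B,H\rangle\leq_\alpha p_\beta$ lies in $D_\beta$; this requires the stem ordinals of $s$ (and the sets $A_i$ carried by $s$) to sit inside $A_\beta$. The diagonal intersection only guarantees $A^*\setminus(\beta{+}1)\subseteq A_\beta$, so a stem $s$ below $p^*$ whose first ordinal is $\leq\beta$ need not produce a condition $\leq p_\beta$ at all, and neither the Prikry property nor the openness of $D_\beta$ rescues you. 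Consequently your recovery formula $X=\bigcup\{f(s)\mid p^*(s)\in G\}$ can miss $\beta$'s: once the generic has a stem ordinal below $\beta$, every $p^*(s)\in G$ has that small ordinal in its stem, and you have no mechanism forcing $p^*(s)$ to decide $\check\beta\in\dot X$.

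The paper avoids this precisely by \emph{not} diagonalizing: for each coordinate $\xi$ it works below $p_\xi$ itself, so all relevant stems automatically have ordinals in $A_\xi$ and Lemma~\ref{maximaltop} applies cleanly. If you want to keep your direct-counting format, replace the single $p^*$ by the sequence $\vec p$ and encode $\dot X$ by the function $(\xi,s)\mapsto$ (decision of $p_\xi(s)$ about $\check\xi\in\dot X$); there are still only $\alpha$ stems and $\alpha$ coordinates, so the count $3^{\alpha}=\alpha^+$ goes through unchanged.
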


\begin{proof}

Suppose for a contradiction that in $V^{P_\alpha}$, $2^\alpha \geq \alpha^{++}$.
Let $p \Vdash_\alpha \{\dot{f}_\gamma \mid \dot{f}_\gamma: \alpha \to 2, \gamma<\alpha^{++}\}$ a sequence of different subsets of $\alpha$.
Since forcing with stem of $p$ does not change GCH at $\alpha$, we may assume for simplicity that $p$ is pure.
Let $D_{\gamma,\xi}$ be the collection of $p^\prime$ that decides $\dot{f}_\gamma(\xi)$.
Use Proposition \ref{maximaltop} to get a $\leq^*$-decreasing sequence $ \vec{p}_\gamma=\{p_{\gamma,\xi} \mid \xi<\alpha\}$ each $p_{\gamma,\xi}$ witnesses Proposition \ref{maximaltop} for $D_{\gamma,\xi}$.
By GCH in $V$, $(\alpha^+)^\alpha=\alpha$, we can shrink the family of functions so that there is $\vec{p}$ with $\vec{p}_\gamma=\vec{p}$ for all $\gamma$.
Write $\vec{p}=\{ p_\gamma \mid \gamma<\alpha\}$, $p_\gamma=\langle \alpha,A_\gamma,F_\gamma \rangle$.
Note that the number of conditions that decide $\dot{f}_\gamma(\xi)$ for some $\gamma,\xi$ is at most $\alpha$: i.e. the collection of $s^\frown \langle \alpha, A_\gamma \setminus \tp(s),F_\gamma \restriction \{ \gamma \in A_\gamma \setminus \tp(s) \mid \circ(\gamma)>\circ(s)\}\rangle$ for some $s,\gamma$.
Then, we count the number of nice names for functions from $\alpha$ to $2$, with respect to conditions in $\{s^\frown \langle \alpha, A_\gamma \setminus \tp(s),F_\gamma \restriction \{ \gamma \in A_\gamma \setminus \tp(s) \mid \circ(\gamma)>\circ(s)\} \rangle\mid$ for some $s$ and $\gamma \}$.
It is at most $\alpha^{\alpha \otimes \alpha}=\alpha^+$, which is a contradiction.

\end{proof}

Let $G$ be $P_\alpha$-generic.
If $\circ(\alpha)=0$, then $P_\alpha$ is isomorphic to $Q$ for some $\beta,\dot{\nu}$ and $Q$ as in Definition \ref{quotient}.
Define $C_\alpha$ in $V[G]$ by $C_\alpha=C_Q \cup \{\alpha\}$ where $C_Q$ is the set added by $Q$ (if $Q=P_\beta$, then $C_Q=C_\beta$, otherwise, $C_Q=C_\beta \sqcup C_{\nu/\beta}$ where $\nu=\dot{\nu}[G \restriction P_\beta]$.
Then $C_\alpha \subseteq \alpha+1$, $\max(C_\alpha)=\alpha$, and $C_\alpha \setminus \{\alpha\}$ is a closed bounded subset of $\alpha$.

Consider the general case, $\circ(\alpha)>0$.
Let $G$ be $P_\alpha$-generic.
Let $C^*=\{\gamma<\alpha \mid \exists p \in P_\alpha \exists i (\stem(p)_i)_0=\gamma\}$.
The tail of $C^*$ is simply a Prikry sequence if $\circ(\alpha)=1$, or a magidor  club set of $\alpha$ if $\circ(\alpha)>1$.
For $\gamma \in C^*$, let $G_\gamma=G \restriction P_\gamma$.
For such $\gamma$.
For $p \in G$ with $(\stem(p)_i)_0=\gamma$ for some $i$, let $s_{\gamma,p}=\stem(p)_i$.
If $s_{\gamma,p}$ is a triple, then $s_{\gamma,p^\prime}$ is a triple for all $p^\prime \in G$  with $s_{\gamma,p^\prime}$ exists. In this case, let $C^*_\gamma=\emptyset$.
If, otherwise, $s_{\gamma,p}$ is a $5$-tuple.
Set $R=\{\dot{q}[G_\gamma] \mid \exists p^\prime \in G((s_{\gamma,p^\prime})_4=\dot{q}) \}$.
Then, $R$ is $P_{\dot{\nu}[G_\gamma]}[G_\gamma]$-generic over $V[G_\gamma]$ for some $\dot{\nu}$.
This produces the set $C_{\nu_\gamma/\gamma} \subseteq (\gamma,\nu_\gamma]$  where $\nu_\gamma=\dot{\nu}[G_\gamma]$.
We have that $\nu_\gamma \in [\gamma,\max(C^* \setminus (\gamma+1))$.
If $\nu_\gamma=\gamma$, then $C_{\nu_\gamma/\gamma}=\emptyset$.
Assume $\nu_\gamma>\gamma$.
Then $\max(C_{\nu_\gamma/\gamma})=\nu_\gamma$, $C_{\nu/\gamma} \setminus \{\nu\}$ is a  club set of $\nu$ if $\circ(\nu)>0$, or a closed bounded subset of $\nu$ if $\circ(\nu)=0$.
Furthermore, $\nu<\min(C^* \setminus (\nu+1))$.
Set $C_\alpha=C^* \cup \cup\{C_{\nu_\gamma/\gamma} \mid \gamma \in C^*\} \cup \{\alpha\}$.

Now, if $\beta,\dot{\nu}$ and $Q$ be as in Definition \ref{quotient}, and $G$ is $Q$-generic.
In $P_\alpha[G]$, we can apply the same process to get the required set $C_\alpha/Q$ as follows.
Let $H$ be $P_\alpha[G]$-generic, and $I$ be the canonical generic over $P_\alpha$ such that $V[G][H]=V[I]$.
In $V[I]$, we have $C_\alpha$.
Set $C_{\alpha/Q}=C_\alpha \setminus C_Q$.

\begin{prop}

Forcing with $P_\alpha$ preserves cofinalities of regular cardinals which are not in $\lim(C_\alpha)$.

\end{prop}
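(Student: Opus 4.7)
Let $\lambda$ be regular in $V$ with $\lambda \notin \lim(C_\alpha)$. Cofinalities $\geq \alpha^{++}$ are preserved by the $\alpha^{++}$-c.c.\ of Proposition \ref{cc}, and the cases $\lambda \in \{\alpha, \alpha^+\}$ were handled by the preceding propositions in this section; so we may assume $\lambda < \alpha$. If $\circ(\alpha)=0$, then up to forcing equivalence $P_\alpha$ is some $Q$ as in Definition \ref{quotient} and $\lim(C_\alpha) = \lim(C_Q)$, so the claim reduces to the inductive instance of Proposition \ref{theproperties}(\ref{club}). Hence assume $\circ(\alpha)>0$.

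The plan is to exhibit, in the spirit of the decomposition $C_\alpha = C_Q \sqcup C_{\alpha/Q}$ of Proposition \ref{theproperties}(\ref{clubfactor}), an initial factor $Q$ of $P_\alpha$ that already absorbs enough of the generic to enclose $\lambda$ between two ordinals of $X$ strictly above $\lambda$. Because $C^* \subseteq C_\alpha$, the hypothesis gives $\lambda \notin \lim(C^*)$, so $C^* \cap \lambda$ is bounded below $\lambda$. If $\gamma := \max(C^* \cap \lambda)$ exists and the corresponding $\nu_\gamma$ (read off the $5$-tuple at $\gamma$ in any condition of $G$ that has one) satisfies $\nu_\gamma \geq \lambda$, set $Q := P_\gamma * P_{\nu_\gamma}/P_\gamma$ and $\mu := \nu_\gamma$; otherwise set $\mu := \min(C^* \setminus \lambda)$ and $Q := P_\mu$. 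In either case, by Proposition \ref{theproperties}(\ref{factor1}), $Q$ is an initial-segment factor of $P_\alpha$, $\mu \in X$, $\mu > \lambda$, and $\max(C_Q)=\mu$.

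Since $C_Q \subseteq C_\alpha$, we have $\lambda \notin \lim(C_Q)$, so the inductive form of Proposition \ref{theproperties}(\ref{club}) applied to $Q$ below $\alpha$ ensures that $\lambda$ is regular in $V[G_Q]$. It now suffices to show that $P_\alpha[G_Q]$ preserves $\cf^{V[G_Q]}(\lambda) = \lambda$. Every non-trivial condition in $P_\alpha[G_Q]$ has its lowest stem entry strictly above $\mu$, and by Corollary \ref{closure} together with Proposition \ref{theproperties}(\ref{ccandclosed}), the direct-extension order restricted to conditions sharing a fixed stem is $\min(X \setminus (\mu+1))$-closed, in particular $\lambda^+$-closed. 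Combining this with the Prikry property of $P_\alpha/Q$ (Proposition \ref{theproperties}(\ref{prikry})) and the maximality construction of Lemma \ref{maximaltop}, the argument used to preserve $\alpha^+$ earlier in this section adapts almost verbatim: given a name $\dot f$ for a function $\delta \to \lambda$ with $\delta < \lambda$, one passes to a direct extension for which every stem decides $\dot f(\xi)$ in the maximal way, so that $\rge(\dot f)$ is forced to lie in a set of cardinality at most $\mu \otimes \delta < \lambda$ already definable in $V[G_Q]$, contradicting the regularity of $\lambda$ there.

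The main obstacle is the first case above, where $\lambda$ lies inside a block $(\gamma,\nu_\gamma]$. One must verify that $\lambda \notin \lim(C_{\nu_\gamma/\gamma})$ (which follows from $C_\alpha = C_Q \sqcup C_{\alpha/Q}$ together with $C_Q = C_\gamma \sqcup C_{\nu_\gamma/\gamma}$ in Proposition \ref{theproperties}(\ref{clubfactor})) and then invoke the inductive statement for the two-step iteration $P_\gamma * P_{\nu_\gamma}/P_\gamma$ rather than for a single $P_\beta$. A secondary technicality is accommodating impure conditions in $P_\alpha[G_Q]$: one reduces to the pure case by first forcing over their stems and re-invoking the Prikry property and the closure of the direct-extension order on pure conditions in the residual quotient.
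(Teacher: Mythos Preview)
Your overall strategy---factor $P_\alpha$ through an initial segment $Q$ that captures $\lambda$, invoke the inductive instance of Proposition~\ref{theproperties}(\ref{club}) to keep $\lambda$ regular in $V[G_Q]$, then argue the tail $P_\alpha[G_Q]$ adds no short sequences---matches the paper's. But you overcomplicate the choice of $Q$ and the final step contains an error.

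The paper simply picks any $\beta\in C^*$ with $\beta>\lambda$; such $\beta$ exists because $C^*$ is cofinal in $\alpha$ when $\circ(\alpha)>0$. There is no need to analyse whether $\lambda$ sits inside a block $(\gamma,\nu_\gamma]$: once $\beta>\lambda$, you have $C_\beta\subseteq C_\alpha$, hence $\lambda\notin\lim(C_\beta)$, and induction gives $\lambda$ regular in $V[G_\beta]$. Then $(P_\alpha[G_\beta],\leq^*)$ is $\min(X\setminus(\beta+1))$-closed by Proposition~\ref{theproperties}(\ref{ccandclosed}), which together with the Prikry property implies no new sequences of length $<\lambda$ are added, so $\lambda$ stays regular. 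Your ``main obstacle'' dissolves entirely under this choice of factor.

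The genuine error is the bound $\mu\otimes\delta<\lambda$ at the end: you arranged $\mu>\lambda$, so $\mu\otimes\delta\geq\mu>\lambda$. The $\alpha^+$-preservation argument you are adapting works because the number of stems is at most $\alpha<\alpha^+$; here stems in $P_\alpha[G_Q]$ range over ordinals up to $\alpha$, so the counting gives no useful bound below $\lambda$. You do not need Lemma~\ref{maximaltop} here at all: closure of $\leq^*$ to a degree above $\lambda$ plus the Prikry property already let you build a single $\leq^*$-decreasing chain of length $\delta<\lambda$ deciding every $\dot f(\xi)$, so $\dot f$ is in $V[G_Q]$ and cannot be cofinal in $\lambda$.
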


\begin{proof}

Let $\gamma \not \in \lim(C_\alpha)$.
First, suppose that $\circ(\alpha)=0$, then $P_\alpha$ is equivalent to some $P_\beta * P_{\dot{\nu}}/P_\beta$, and $C_\alpha=C_\beta \sqcup C_{\nu/\beta} \cup \{\alpha\}$ for some $\nu$.
If $\gamma=\alpha$, then $\gamma$ is regular in $V[G]$ since $P_\alpha$ is equivalent to a small forcing.
If $\gamma<\alpha$, then $\gamma \not \in \lim(C_\beta \sqcup C_{\nu/\beta})$, and so $\gamma$ is regular in $V[G \restriction P_\beta * P_{\dot{\nu}}/P_\beta]=V[G]$.
Now, suppose that $\circ(\alpha)>0$.
Then find $\beta \in (\gamma,\alpha)$ such that $G \restriction P_\beta$ exists.
Note that $\gamma \not \in \lim(C_\beta)$, so $\gamma$ is regular in $V[G \restriction P_\beta]$.
The forcing $(P_\alpha[G \restriction P_\beta], \leq^*)$ is $\beta^+$-closed, so $\gamma$ is still regular in $V[G]$.

\end{proof}

Similar proof with an easy application on factoring forcings show that

\begin{prop}

$\Vdash_\alpha \gamma \not \in \lim(C_{\alpha/Q})$ and $(\gamma$ is regular)$^{V^Q}$ implies that $\gamma$ is regular.

\end{prop}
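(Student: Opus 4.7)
The plan is to mimic the proof of the previous proposition, but working in $V[G]$ as the base model instead of $V$. Let $I$ be $P_\alpha$-generic and use the factorization from Proposition \ref{theproperties} (\ref{factor1}) and (\ref{factor2}) to write $V[I] = V[G][H]$, where $G$ is $Q$-generic and $H$ is $P_\alpha[G]$-generic. Assume $\gamma$ is regular in $V[G]$ and that, in $V[I]$, $\gamma \notin \lim(C_{\alpha/Q})$; the goal is to show $\gamma$ is regular in $V[I]$.

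I would split into two cases by $\circ(\alpha)$. If $\circ(\alpha)=0$, then $P_\alpha$ is equivalent to $P_{\beta'}*P_{\dot{\nu}}/P_{\beta'}$ for some $\beta'<\alpha$ and $\dot{\nu}$, and the quotient $P_\alpha/Q$ unfolds accordingly into a small forcing over $V[G]$; in this case $C_{\alpha/Q}\setminus\{\alpha\}$ is a bounded subset of $\alpha$, and the argument reduces to that of the $\circ(\alpha)=0$ case of the previous proposition (with Proposition \ref{theproperties} (\ref{twolevelquotient}) handling the quotient bookkeeping).

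If $\circ(\alpha)>0$, I would use that $\gamma \notin \lim(C_{\alpha/Q})$ to pick some $\beta'\in (\gamma,\alpha)$ that appears in the generic sequence $C^*$ constructed in Section \ref{cardinalbehave}, large enough that the initial segment $C_\alpha \cap (\gamma, \beta'+1)$ is already fully determined by $I \restriction P_{\beta'}$. Then $C_{\beta'/Q}$ is an initial segment of $C_{\alpha/Q}$ and still has the property $\gamma \notin \lim(C_{\beta'/Q})$. By the inductive hypothesis applied at the level $\beta'$ (to the forcing $P_{\beta'}/Q$), $\gamma$ is regular in $V[G][H\restriction P_{\beta'}[G]] = V[I\restriction P_{\beta'}]$. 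The remaining quotient $(P_\alpha/P_{\beta'})[I \restriction P_{\beta'}]$ is a Prikry-type forcing whose pure part is $(\beta')^+$-closed (by Proposition \ref{theproperties} (\ref{ccandclosed})), so by the usual Prikry-property $+$ closure argument it preserves regularity of $\gamma$.

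The main obstacle I anticipate is setting up the choice of $\beta'$ correctly in the impure/quotient setting: one must verify that $\beta'$ can be chosen so that (i) $P_{\beta'}/Q$ really does appear as an intermediate forcing of $P_\alpha/Q$ in the factorization, and (ii) the membership $\gamma \in \lim(C_{\beta'/Q})$ truly is equivalent to $\gamma \in \lim(C_{\alpha/Q})$ in the relevant range, which requires tracing through the construction of $C_{\alpha/Q}$ in terms of the generic points in $C^*$ above $Q$'s stem. Once this bookkeeping is in place, the remainder of the proof is a routine combination of the Prikry property and the closure of pure conditions.
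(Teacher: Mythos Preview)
Your proposal is correct and follows essentially the same route the paper intends: the paper gives no detailed proof here, only the remark that it is ``a similar proof with an easy application on factoring forcings,'' and your outline---factor $V[I]=V[G][H]$, pick $\beta'\in(\gamma,\alpha)$ in the generic sequence, invoke the inductive hypothesis for $P_{\beta'}/Q$, and finish with Prikry property plus closure of the remaining quotient---is exactly the relativized version of the preceding proposition's argument. The bookkeeping obstacle you flag (that $P_{\beta'}/Q$ sits as a genuine intermediate factor and that $C_{\beta'/Q}$ is an initial segment of $C_{\alpha/Q}$) is precisely the ``factoring'' the paper alludes to, and it is handled by Proposition~\ref{theproperties} (\ref{factor2}), (\ref{twolevelquotient}), and (\ref{clubfactor}).
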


\section{The main forcing $\mathbb{P}_\kappa$}
\label{mainforcing}

We now define the main forcing $\mathbb{P}_\kappa$.

\begin{defn}

Define $\mathbb{P}_\kappa=\cup_{\alpha<\kappa} P_\alpha$.
For $p,q \in \mathbb{P}_\kappa$, define $p\leq q$ if $p,q \in P_\alpha$ and $p \leq_\alpha q$, or $p \in P_\beta$, $q \in P_\alpha$, $\beta>\alpha$, and $p \restriction P_\alpha \leq_\alpha q$.

\end{defn}

If $G$ is $\mathbb{P}_\kappa$-generic, then for $\alpha<\kappa$ with some $p \in G \cap P_\alpha$, $G_\alpha:= G \restriction P_\alpha$ is $P_\alpha$-generic.
$\mathbb{P}_\kappa$ has size $\kappa$, so it is $\kappa^+$-c.c.

\begin{thm}

Let $G$ be $\mathbb{P}_\kappa$-generic.
In $V[G]$, if $f: \alpha \to On$ for some $\alpha<\kappa$, then there is $\beta<\kappa$ such that $f \in V[G \restriction P_\beta]$.

\end{thm}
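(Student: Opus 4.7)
The plan is to exploit the directed-union structure $\mathbb{P}_\kappa = \bigcup_{\alpha \in X} P_\alpha$ together with the regularity of $\kappa$. Let $\dot f$ be a $\mathbb{P}_\kappa$-name for $f$ and $p_0 \in G$ a condition forcing $\dot f : \check\alpha \to \ON$. For each $\xi < \alpha$, I would use genericity of $G$ against the dense set of conditions below $p_0$ deciding $\dot f(\check\xi)$ to pick $q_\xi \in G$ with $q_\xi \Vdash_{\mathbb{P}_\kappa} \dot f(\check\xi) = \check\eta_\xi$, where necessarily $\eta_\xi = f(\xi)$. Each $q_\xi$ lies in some $P_{\delta_\xi}$ with $\delta_\xi < \kappa$.

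Since $\alpha < \kappa$ and $\kappa$ is regular, the supremum $\sup_{\xi < \alpha} \delta_\xi$ is strictly below $\kappa$. Choose $\beta \in X$ above this supremum; this is possible by the unboundedness of $X$ in $\kappa$. I claim $f \in V[G_\beta]$. By the factorization in Proposition~\ref{theproperties}(\ref{factor2}), iterated through the chain of intermediate stages, for every $\delta \leq \beta$ with $\delta \in X$ one has $V[G_\delta] \subseteq V[G_\beta]$, hence $G_\delta \in V[G_\beta]$. Therefore
\[
\mathcal{G} := \bigcup \{\, G_\delta : \delta \leq \beta,\ \delta \in X \,\}
\]
belongs to $V[G_\beta]$. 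Since $\mathbb{P}_\kappa, \dot f \in V \subseteq V[G_\beta]$ and the forcing relation $\Vdash_{\mathbb{P}_\kappa}$ is $V$-definable, in $V[G_\beta]$ I can define
\[
f'(\xi) := \text{the unique $\eta$ such that some $q \in \mathcal{G}$ satisfies $q \Vdash_{\mathbb{P}_\kappa} \dot f(\check\xi) = \check\eta$}.
\]
Existence is witnessed by $q_\xi \in G_{\delta_\xi} \subseteq \mathcal{G}$; uniqueness follows because any two elements of $\mathcal{G}$ lie in $G$ and hence admit a common lower bound in $G$, which cannot force two distinct values for $\dot f(\check\xi)$. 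Since $q_\xi \in G$ forces $\dot f(\check\xi) = \check\eta_\xi$ and $G$ realizes $\dot f$ to $f$, we have $f' = f$, whence $f \in V[G_\beta]$.

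The only step requiring any care is the inclusion $G_\delta \in V[G_\beta]$ for $\delta \leq \beta$, but this is a direct consequence of the factorization already established at each level. Notably, no Prikry-type argument or closure statement for $\mathbb{P}_\kappa$ itself is needed: the proof rides entirely on the regularity of $\kappa$ (bounding the $\delta_\xi$'s below $\kappa$) and on the direct-limit presentation of $\mathbb{P}_\kappa$.
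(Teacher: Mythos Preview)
Your argument has a genuine circularity. You choose the conditions $q_\xi \in G$ using the generic $G$, so the map $\xi \mapsto \delta_\xi$ is a function from $\alpha$ into $\kappa$ that lives in $V[G]$, not in $V$. When you write ``since $\alpha<\kappa$ and $\kappa$ is regular, the supremum $\sup_{\xi<\alpha}\delta_\xi$ is strictly below $\kappa$'', you are invoking regularity of $\kappa$ in $V[G]$; regularity in $V$ says nothing about $V[G]$-sequences. But the preservation of the regularity (indeed inaccessibility) of $\kappa$ in $V[G]$ is exactly the corollary derived \emph{from} this theorem in the paper. So the step that bounds the $\delta_\xi$'s presupposes what the theorem is meant to establish.

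Nor can the gap be closed by a routine chain-condition argument. One might hope to replace the $q_\xi$'s by maximal antichains $A_\xi\subseteq\mathbb{P}_\kappa$ of conditions deciding $\dot f(\xi)$, chosen in $V$, and then bound $\bigcup_\xi A_\xi$ inside some $P_\beta$ by regularity of $\kappa$ in $V$. This would require $\mathbb{P}_\kappa$ to be $\kappa$-c.c., which it is not: for an increasing sequence $\langle\alpha_\xi:\xi<\kappa\rangle$ in $X$ with $\circ(\alpha_\xi)>0$, the pure conditions $\langle\alpha_\xi,A_\xi,F_\xi\rangle$ with $A_\xi\cap\{\alpha_\eta:\eta<\xi\}=\emptyset$ form an antichain of size $\kappa$. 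The paper's proof therefore takes a completely different route. It is a density argument carried out in $V$: given $p$ and $\dot f$, one uses the stationarity hypothesis to find $\gamma=M\cap\kappa$ with $\circ(\gamma)>\alpha$, and then \emph{builds} a single condition $p^\frown\langle\gamma,A,F\rangle\in P_\gamma$ in which the function $F$ is engineered (level by level along the Mitchell order below $\gamma$) so that its values are names for conditions deciding the successive $\dot f(\tau)$'s. This forces $\dot f\in V[\dot G\restriction P_\gamma]$ without ever appealing to any property of $\kappa$ in $V[G]$; the stationarity assumption and the Mitchell-order structure of the forcing are doing essential work that your direct-limit argument does not capture.
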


\begin{proof}

Recall that for $\gamma<\kappa$, $\{\alpha \in X \mid \circ(\alpha)\geq \gamma\}$ is stationary.
Let $\dot{f}: \alpha \to On$ and $p \in \mathbb{P}_\kappa$.
Let $M \prec H_\theta$ such that $M \cap \kappa \in \kappa$, $M$ is transitive below $M \cap \kappa$, $\mathbb{P}_\kappa,p,\kappa,\dot{f} \in M$, and if $\gamma:=M \cap \kappa$, then $\circ(\gamma)>\alpha$ is a regular cardinal and $\circ(\gamma)$ is greater than $\circ(\xi)$ any $\xi$ which is equal to $(\stem(p)_i)_0$ for some $i$, say that $\xi_0$ is  the maximum of those $\circ(\xi)$s.
Let $A \in \vec{U}(\gamma)$.
We build $F$ with $\dom(F)=\{\beta \in A \mid \circ(\beta)>\xi_0\}$.
We define the value $F$ inductively on the Mitchell order.
For each $\beta$ with $\circ(\beta)=\xi_0+1$, let $G_\beta$ be the forcing containing $p^\frown \langle \gamma, A \restriction \beta, \emptyset \rangle$.
Then, in $V[G_\beta]$, there is $q_\beta \in \mathbb{P}_\kappa/G_\beta$ such that $q_\beta$ decides $\dot{f}[G_\beta](0)$.
Since $M \cap \kappa$ is transitive and $\beta<M \cap \kappa$, we may find $q_\beta \in M[G_\beta]$.
Note that $\kappa \cap M[G_\beta]=\kappa \cap M$.
Thus, $q_\beta \in P_\nu/P_\beta$ for some $\nu_\beta<\gamma$.
Let $\dot{\nu}_\beta$ and $\dot{q}_\beta$ be names for such $ \nu_\beta$ and $q_\beta$ and define $F(\beta)=\langle \dot{\nu}_\beta,\dot{q}_\beta \rangle$.
Suppose that $F$ is defined on ordinals of Mitchell orders in $(\xi_0,\xi_1)$ for some $\xi_1<\xi_0+\alpha$.
Write $\xi_1=\xi_0+\tau$ for unique $\tau$.
For $\beta \in A$ with $\circ(\beta)=\xi_1$, let $G_\beta$ be generic containing $p^\frown \langle \alpha,A \restriction \beta, F \restriction \{\tau \in A \restriction \beta \mid \circ(\tau)>\xi_0\} \rangle$.
Find $q_\beta \in \mathbb{P}_\kappa/G_\beta \cap M[G_\beta]$ deciding $\dot{f}[G_\beta](\tau)$, so $q_\beta \in P_\nu[G_\beta]$,  and let $\dot{\nu}_\beta$, $\dot{q}_\beta$ be such names.
Set $F(\beta)=\langle \nu_\beta,\dot{q}_\beta\rangle$.
This process continues up to (but not including) stage $\xi_0+\alpha$.
For $\beta$ with $\circ(\beta) \geq \xi_0+\alpha$, assign any $F(\beta)$.
We can see that by the choice of $F$, if $G$ is $\mathbb{P}_\kappa$-generic containing $p^\frown \langle \alpha,A,F \rangle$, then $\dot{f} \in V[G \restriction P_\gamma]$.
Thus, $p^\frown \langle \gamma,A,F \rangle\Vdash_\kappa \dot{f} \in V[\dot{G} \restriction P_\gamma]$.
\end{proof}

\begin{coll}

In $V^{\mathbb{P}_\kappa}$, $\kappa$ is still inaccessible, and GCH holds below $\kappa$.

\end{coll}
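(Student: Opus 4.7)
The plan is to combine the preceding theorem---every bounded function in $V[G]$ is captured in some intermediate extension $V[G\restriction P_\beta]$ with $\beta<\kappa$---with the closure and Prikry properties of the quotients $P_\gamma/P_\beta$ recorded in Section~\ref{forcingPalpha}, and thereby conclude that for each $\alpha<\kappa$ and each $\beta\in X$ with $\beta>\alpha$, $\mathcal{P}(\alpha)^{V[G]}\subseteq V[G\restriction P_\beta]$. Once this inclusion is in hand, GCH at $\alpha$ in $V[G]$ is inherited from GCH at $\alpha$ in $V[G\restriction P_\beta]$, which holds by Proposition~\ref{GCHbelow} (applied at level $\beta$) together with the preservation of GCH at $\beta$ established in Section~\ref{cardinalbehave}.

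The central step is the following: for $\beta<\gamma$ both in $X$ and for $\alpha<\beta$, the quotient $P_\gamma/P_\beta$ adds no new $\alpha$-sequence of ordinals. Working in $V[G\restriction P_\beta]$, this forcing has the Prikry property, and its direct extension order is $\min(X\setminus(\beta+1))$-closed; both facts are special cases of the propositions following Definition~\ref{quotient}. Since $\alpha<\beta<\min(X\setminus(\beta+1))$, the standard Prikry-plus-closure argument applies: for each coordinate $\xi<\alpha$, the Prikry property supplies a $\leq^*$-extension deciding $f(\xi)$, and closure delivers a common $\leq^*$-lower bound deciding all coordinates of $f$ simultaneously, so $f\in V[G\restriction P_\beta]$. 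Given any $A\in\mathcal{P}(\alpha)^{V[G]}$, the preceding theorem produces $\gamma<\kappa$ with $A\in V[G\restriction P_\gamma]$; we may assume $\gamma\in X$ with $\gamma>\beta$, and the previous sentence then places $A$ in $V[G\restriction P_\beta]$, as desired.

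With GCH below $\kappa$ now holding in $V[G]$, the inaccessibility of $\kappa$ follows easily: the strong limit axiom comes directly from $2^\alpha=\alpha^+<\kappa$ for all $\alpha<\kappa$, while regularity and preservation as a cardinal come from the theorem again---any bijection $\alpha\to\kappa$ or cofinal map $\lambda\to\kappa$ with $\alpha,\lambda<\kappa$ would live in some $V[G\restriction P_\gamma]$ with $|P_\gamma|\leq\gamma^+<\kappa$, contradicting the fact that $\kappa$ is still strongly inaccessible after such a small forcing. The main obstacle is the need to place all of $\mathcal{P}(\alpha)^{V[G]}$ inside a single intermediate $V[G\restriction P_\beta]$, not merely to spread it across the family of all intermediate extensions---a crude union-of-extensions count only delivers $2^\alpha\leq\kappa$, which is too weak for GCH. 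The Prikry-plus-closure reduction described above is exactly what upgrades this crude bound to the sharp bound $2^\alpha=\alpha^+$, and it rests on the correct interpretation of $\min(X\setminus(\beta+1))$-closure of $\leq^*_{\gamma/\beta}$ as a property of the quotient order living in the intermediate model $V[G\restriction P_\beta]$.
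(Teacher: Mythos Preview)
Your argument is correct, but it takes a more elaborate detour than necessary. The paper states the corollary without proof because it follows almost immediately from the theorem: the theorem applies to \emph{any} function $f:\alpha\to\mathrm{On}$ with $\alpha<\kappa$, not just to individual subsets of some fixed $\alpha$. In particular, a putative failure of GCH at $\alpha$ is witnessed by a sequence $\langle A_\xi:\xi<\alpha^{++}\rangle$ of distinct subsets of $\alpha$, and this sequence (coded, say, as a function $\alpha^{++}\times\alpha\to 2$) has domain of size $\alpha^{++}<\kappa$. The theorem then places the entire sequence in a single $V[G\restriction P_\beta]$, contradicting GCH at $\alpha$ there. The same one-line trick handles cardinal preservation below $\kappa$ (apply the theorem to the collapsing bijection) and regularity of $\kappa$ (apply it to the cofinal map), so no further appeal to the Prikry property or the closure of $\leq^*_{\gamma/\beta}$ is needed.

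What you did differently is to prove the stronger fact that $\mathcal{P}(\alpha)^{V[G]}\subseteq V[G\restriction P_\beta]$ for \emph{every} $\beta\in X$ above $\alpha$, by showing directly that each quotient $P_\gamma/P_\beta$ adds no new $\alpha$-sequences. That is true and your argument for it is sound; it just re-derives, at the quotient level, a special case of what the preceding theorem already delivers. Your remark that ``a crude union-of-extensions count only delivers $2^\alpha\le\kappa$'' is right if one applies the theorem subset-by-subset, but the point you missed is that one can instead apply it to the whole $\alpha^{++}$-indexed counterexample at once.
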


Let $G$ be $\mathbb{P}_\kappa$-generic.
Set $C_\kappa=\cup_{\alpha<\kappa} C_\alpha$ where $C_\alpha$ is the set derived from $V[G \restriction P_\alpha]$.
For $\alpha<\beta<\kappa$, $C_\alpha \sqsubseteq C_\beta$.

\begin{prop}

$C_\kappa$ is a club subset of $\kappa$.

\end{prop}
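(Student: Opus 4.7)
The plan is to split the proof into the two standard tasks: verifying that $C_\kappa$ is unbounded in $\kappa$, and that it is closed.

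For unboundedness, I would first show that for each $\delta<\kappa$ the set $D_\delta:=\{p\in\mathbb{P}_\kappa\mid p\in P_\beta\text{ for some }\beta>\delta\text{ with }\beta\in X\}$ is dense in $\mathbb{P}_\kappa$. Given $p\in P_\alpha$, pick $\beta\in X$ with $\beta>\max(\alpha,\delta)$, which exists because $X$ is unbounded in $\kappa$ (every $\{\alpha\in X\mid\circ(\alpha)\geq\nu\}$ is stationary). Then extend $p$ by appending a top tuple $\langle\beta,A,F\rangle$ with $A\in\vec{U}(\beta)$ chosen so that $\min(A)$ exceeds the relevant bounds coming from $\stem(p)$, and with $F$ defined on $\{\xi\in A\mid\circ(\xi)>\circ(\stem(p))\}$; this produces a legitimate condition in $P_\beta$ below $p$ by Definition \ref{forcingwithtop}. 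By genericity, $G$ meets $D_\delta$, so $G\cap P_\beta\neq\emptyset$ for cofinally many $\beta<\kappa$, and for each such $\beta$ the construction in Section \ref{cardinalbehave} gives $\beta=\max(C_\beta)\in C_\kappa$.

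For closedness, let $\gamma<\kappa$ be a limit point of $C_\kappa$. Using unboundedness, fix $\beta>\gamma$ for which $G\restriction P_\beta$ exists, so that $C_\beta$ is defined in $V[G\restriction P_\beta]$. I would then verify $C_\kappa\cap\gamma\subseteq C_\beta$ using the initial-segment relation $C_\alpha\sqsubseteq C_\beta$ recorded just before the proposition: any $\delta\in C_\kappa\cap\gamma$ lies in some $C_{\alpha'}$; if $\alpha'\leq\beta$ then $C_{\alpha'}\sqsubseteq C_\beta$ gives $\delta\in C_\beta$, while if $\alpha'>\beta$ then $C_\beta\sqsubseteq C_{\alpha'}$ is an initial segment of $C_{\alpha'}$ containing every element below $\beta$, so again $\delta\in C_\beta$ because $\delta<\gamma<\beta$. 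Hence $\gamma$ is a limit of $C_\beta\cap\gamma$, and by the closedness of $C_\beta\setminus\{\beta\}$ in $\beta$ established in Section \ref{cardinalbehave}, $\gamma\in C_\beta\subseteq C_\kappa$.

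The main technical point is the density step for unboundedness: for any $p\in P_\alpha$ and any $\beta\in X\cap(\alpha,\kappa)$, one must actually produce a legitimate extension sitting in $P_\beta$. This amounts to appending $\langle\beta,A,F\rangle$ with $A$ chosen so that $\min(A)$ exceeds all parameters $\bar{\nu}_i$ from $\stem(p)$, and with $F$ built on the appropriate subset of $A$ using the $\alpha$-completeness of $\vec{U}(\beta)$ and the closure properties of the quotient forcings from Proposition \ref{theproperties}. These manipulations are routine, but need to be checked to confirm legitimacy. The remainder of the argument reduces to bookkeeping with the already-asserted chain property $C_\alpha\sqsubseteq C_\beta$ and the closedness of each $C_\beta$ in $\beta+1$.
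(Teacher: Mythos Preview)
Your proposal is correct and follows essentially the same approach as the paper's proof: both establish unboundedness via the density of $\{p\in P_\beta\mid\beta>\delta\}$ (extending any $p\in P_\alpha$ into some $P_\beta$ with $\beta>\alpha$), and both reduce closedness at a limit point $\gamma$ to the already-established closedness of some $C_\beta$ with $\beta>\gamma$ via the chain relation $C_\alpha\sqsubseteq C_\beta$. Your write-up is more explicit than the paper's rather terse argument (in particular your case split on $\alpha'\lessgtr\beta$ spells out what the paper leaves implicit), but the underlying ideas coincide; the one detail you gloss over is that when appending $\langle\beta,A,F\rangle$ the final tuple $\langle\alpha,A_p,F_p\rangle$ of $p$ may need to be augmented to a $5$-tuple with some $\dot{\nu}_\alpha,\dot{q}_\alpha$, but this is indeed routine.
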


\begin{proof}

Since for $p \in P_\alpha$ and $\beta \in X \setminus (\alpha+1)$, there is $p^\prime \leq_\kappa p$, $p^\prime \in P_\beta$, $C_\kappa$ is unbounded in $\kappa$.
To check the closeness, let $\gamma$ be the limit point of $C_\kappa$, then it is forced by some $p \in P_\alpha$ for some $\alpha>\gamma$.
We see that $\gamma$ is a limit point in $C_\alpha$.
Thus, $\gamma$ is a limit point of $C_\kappa$.

\end{proof}

\begin{prop}

If $\gamma \not \in \lim(C_\kappa)$ is regular in $V$, then in is regular in $V^{\mathbb{P}_\kappa}$.

\end{prop}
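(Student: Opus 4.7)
The plan is to argue by contradiction. Suppose $\gamma$ is singularized in $V[G]$; then there are $\delta<\gamma$ and a cofinal function $f:\delta\to\gamma$ with $f\in V[G]$. Since $f$ has bounded domain below $\kappa$, the factorization theorem from this section gives some $\beta<\kappa$ with $f\in V[G_\beta]$. By enlarging $\beta$ if needed (using that $V[G_\beta]\subseteq V[G_{\beta'}]$ for $\beta\le\beta'$) I may assume $\beta\in X$, $\beta>\gamma$, and $G\cap P_\beta\neq\emptyset$, so that $G_\beta$ is $P_\beta$-generic and $f$ witnesses singularity of $\gamma$ inside $V[G_\beta]$.

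Next I would invoke the Section \ref{cardinalbehave} proposition which states that forcing with $P_\beta$ preserves cofinalities of $V$-regular cardinals outside $\lim(C_\beta)$. Applied to $\gamma$ this would say: if $\gamma\notin\lim(C_\beta)$ then $\gamma$ is regular in $V[G_\beta]$, contradicting the existence of $f$. So the problem reduces to showing $\gamma\notin\lim(C_\beta)$.

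For this I rely on the initial-segment property $C_\alpha\sqsubseteq C_\beta$ for $\alpha<\beta$ noted just after the definition of $\mathbb{P}_\kappa$, together with $\max(C_\alpha)=\alpha$. Since $\beta>\gamma$, every element of $C_\beta$ lying strictly below $\gamma$ must appear in $C_\alpha$ for some $\alpha\in X$ with $\alpha<\gamma<\beta$ in the generic tower; conversely, any element of $C_\kappa$ below $\gamma$ appears in some $C_\alpha$ with $\alpha<\beta$ and hence already in $C_\beta$ by $\sqsubseteq$. Therefore $C_\beta\cap\gamma=C_\kappa\cap\gamma$, and $\gamma\in\lim(C_\beta)\Leftrightarrow\gamma\in\lim(C_\kappa)$. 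The hypothesis $\gamma\notin\lim(C_\kappa)$ now yields $\gamma\notin\lim(C_\beta)$, closing the contradiction.

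The only real point of care is the equality $C_\beta\cap\gamma=C_\kappa\cap\gamma$: one needs both that nothing in $C_\beta$ below $\gamma$ was freshly produced at stage $\beta$ (which follows from $\sqsubseteq$ plus $\max(C_\alpha)=\alpha>\gamma$ for intermediate $\alpha$), and that $C_\beta$ already absorbs the whole initial segment of $C_\kappa$ below $\gamma$. Both are immediate bookkeeping consequences of the $\sqsubseteq$ relation, so I expect no substantive obstacle; the content of the proposition is really carried by the earlier quotient/cofinality-preservation proposition and by the intermediate-extension theorem, which here play entirely off the shelf.
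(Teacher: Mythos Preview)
Your proposal is correct and follows essentially the same route as the paper: argue by contradiction, push the cofinal map $f$ into some $V[G_\beta]$ via the intermediate-extension theorem, and then invoke the Section~\ref{cardinalbehave} cofinality-preservation result once you know $\gamma\notin\lim(C_\beta)$. The only difference is cosmetic: the paper dispatches the last step in one line (``since $\gamma\notin\lim(C_\kappa)$, $\gamma\notin\lim(C_\alpha)$''), whereas you work harder to get the full equality $C_\beta\cap\gamma=C_\kappa\cap\gamma$; note that the bare inclusion $C_\beta\subseteq C_\kappa$ already gives $\lim(C_\beta)\subseteq\lim(C_\kappa)$, which is all that is needed.
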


\begin{proof}

Let $\gamma \not \in \lim(C_\kappa)$ be regular in $V$.
Suppose for a contradiction that $\beta<\gamma$ and $f: \beta \to \kappa$ be cofinal.
Find $\alpha<\kappa$ large such that $f \in V[G \restriction P_\alpha]$.
Since $\gamma \not \in \lim(C_\kappa)$, $\gamma \not \in \lim(C_\alpha)$, so $\gamma$ is regular in $V[G \restriction P_\alpha]$, which is a contradiction.

\end{proof}

\section{The optimal assumption}
\label{optimal}

As we noted earlier, the following assumption is also enough to build the forcing that has the same effect:

\begin{assumpt}
\label{optimalassumpt}
$\kappa$ is inaccessible, $X \subseteq \kappa$, each $\alpha \in X$, $\{U(\alpha,\beta) \mid \beta<\circ(\alpha)\}$ is a Mitchell increasing sequence of normal measures, and for $\nu<\kappa$, $\{\alpha \in X \mid \circ(\alpha) \geq \nu\}$ is stationary.

\end{assumpt}

We complete this section by recalling, with proof, that Assumption \ref{optimalassumpt} is optimal.

\begin{prop}

Assume there is no inner model of $0^\P$.
Let $V \subseteq W$.
Suppose that in $W$, there is a club subset $C$ of $\kappa$ containing $V$-regular cardinals, $\kappa$ is inaccessible in $W$, and all $V$-regular cardinals which are not limit points of $C$ are still regular in $W$.
Then, $V$ satisfies Assumption \ref{optimalassumpt}.

\end{prop}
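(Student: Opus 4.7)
The plan is to invoke Mitchell's core model $K = K^V$ together with the Mitchell covering lemma, both available under the hypothesis that there is no inner model of $0^{\P}$. Inside $K$ there is a canonical coherent sequence of normal measures $\vec U^K$; since $K\subseteq V$, both $K$ and $\vec U^K$ lie in $V$. I would set
\[
X := \{\alpha<\kappa : \alpha \text{ is inaccessible in } K\},\qquad \circ(\alpha) := o^K(\alpha),
\]
and take $\vec U$ to be the $V$-lift of $\vec U^K$ (each $U^K(\alpha,\beta)$ is extended to the $V$-ultrafilter $U(\alpha,\beta)=\{A\subseteq\alpha : \exists B\in U^K(\alpha,\beta),\ B\subseteq A\}$, which is $V$-normal and preserves Mitchell coherence by the usual argument that the internal ultrapowers agree up to the appropriate rank). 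With this choice clauses (1) and (2) of Assumption \ref{optimalassumpt} are immediate from the construction of $K$, and all the real work is in verifying the stationarity clause~(3).

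For (3), fix $\nu<\kappa$ and an arbitrary club $E\subseteq\kappa$ in $V$. Since $V$ and $W$ share ordinals and $\kappa$ is regular in $W$, $E$ is again a club in $W$. Now $C$ is a club in $\kappa$ in $W$, hence $\lim(C)$ is a club, and since $C$ is closed in $W$ we have $\lim(C)\subseteq C$, so every point of $\lim(C)$ is $V$-regular by hypothesis on $C$. In $W$ the set $E\cap\lim(C)$ is a club below the $W$-inaccessible $\kappa$, so by Solovay splitting I can pick $\alpha\in E\cap\lim(C)$ with $\cf^W(\alpha)=\tau$ for some $W$-regular cardinal $\tau$ satisfying $\nu<\tau<\alpha$ (for example $\tau=(\nu^{+})^W$ applied at any $\alpha>\tau$ in $E\cap\lim(C)$).

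At such an $\alpha$ I would invoke Mitchell's covering lemma in the Gitik--Mitchell form appropriate to the hypothesis ``no inner model of $0^{\P}$'': if $\alpha$ is regular in $V$ and $\cf^W(\alpha)=\tau<\alpha$, then $\alpha$ is measurable in $K$ with $o^K(\alpha)\geq\tau$. This yields $\alpha\in X$ and $\circ(\alpha)\geq\tau>\nu$. Thus $\{\alpha\in X : \circ(\alpha)\geq\nu\}$ meets every $V$-club $E$ and so is stationary in $V$, which is precisely clause (3).

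The main obstacle is the invocation in the third paragraph: one must cite the precise form of Mitchell covering that corresponds exactly to the anti-large-cardinal threshold ``no inner model of $0^{\P}$'' and that converts the $W$-cofinality of a $V$-regular $\alpha$ into a lower bound on $o^K(\alpha)$. A secondary technicality is verifying that lifting $\vec U^K$ to $V$ preserves coherence and normality across all of $\alpha+1$ simultaneously; this is standard (it mirrors Proposition \ref{measureoneinforcing}), but in the core-model setting one also needs that $K$-measure one sets are dense in the $V$-extensions, which is where the absence of an inner model of $0^{\P}$ enters a second time.
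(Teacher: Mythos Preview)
Your argument is correct and follows essentially the same route as the paper: intersect a given $V$-club with $C$ inside $W$, pick a point $\alpha$ that is $V$-regular but has $W$-cofinality in $[\nu,\alpha)$, and invoke Mitchell's covering result \cite{mitchell1987applicationsofthecoremodel} to conclude $o(\alpha)\ge\nu$. The paper's proof compresses this to four lines and leaves implicit exactly the points you spell out---that the witnessing measures live in $K$ and must be lifted to $V$, and that one should arrange $\cf^W(\alpha)<\alpha$ (your passage through $\lim(C)$ handles this cleanly, whereas the paper tacitly chooses $\gamma$ singular in $W$).
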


\begin{proof}

Let $\nu<\kappa$ and $C^\prime \subseteq \kappa$ be a club in $V$.
By absoluteness, $C^\prime$ is a club in $W$, and so $C^\prime \cap C$ is a club.
Find $\gamma \in C^\prime \cap C$ such that $\cf^W(\gamma) \geq \nu$.
Since $\gamma$ is regular in $V$, by \cite{mitchell1987applicationsofthecoremodel}, $\circ(\gamma) \geq \nu$ in $V$.

\end{proof}

\appendix

\bibliographystyle{ieeetr}
\bibliography{addingclubfromoptimalassumptionreferences}

\begin{thebibliography}{1}

\bibitem{michell1982howweakis}
W.~Mitchell, ``How weak is a closed unbounded ultrafilter?,'' vol.~108,
  pp.~209--230, 1982.

\bibitem{gitik1999onclosedunboundedsets}
M.~Gitik, ``On closed unbounded sets consisting of former regulars,'' {\em The
  Journal of Symbolic Logic}, vol.~64, no.~1, pp.~1--12, 1999.

\bibitem{handbook}
M.~Foreman and A.~Kanamori, {\em Handbook of Set Theory}.
\newblock Springer Netherlands, 2010.

\bibitem{mitchell1987applicationsofthecoremodel}
W.~J. Mitchell, ``Applications of the core model for sequences of measures,''
  {\em Transactions of the American Mathematicla Society}, no.~2, pp.~41--58,
  1987.

\end{thebibliography}
\end{document}